\theoremstyle{plain}
\newtheorem{theorem}{Theorem}[section]
\newtheorem{lemma}[theorem]{Lemma}
\newtheorem{proposition}[theorem]{Proposition}
\newtheorem{claim}[theorem]{Claim}
\newtheorem{corollary}[theorem]{Corollary}
\theoremstyle{definition}
\theoremstyle{remark}
\newtheorem{remark}[theorem]{Remark}
\def\c{\Delta(B_n)}
\begin{document}
\title[On the Commuting graphs of  Brandt Semigroups]{On the Commuting graphs of Brandt Semigroups}
\author[Jitender Kumar, Sandeep Dalal, Pranav Pandey]{Jitender Kumar, Sandeep Dalal, Pranav Pandey}
\address{Department of Mathematics, Birla Institute of Technology and Science Pilani, Pilani, India}
\email{jitenderarora09@gmail.com,deepdalal10@gmail.com,pranavpandey03061996@gmail.com}

\begin{abstract}
The commuting graph of a finite non-commutative semigroup $S$, denoted by $\Delta(S)$, is the simple graph whose vertices are the non-central elements of $S$ and two distinct vertices $x, y$ are adjacent if $xy = yx$. In the present paper, we study various graph theoretic properties of the commuting graph $\c$ of Brandt semigroup $B_n$ including its diameter, clique number, chromatic number, independence number, strong metric dimension and dominance number. Moreover, we obtain the automorphism group ${\rm Aut}(\c)$ and the endomorphism monoid End$(\c)$ of $\c$. We show that ${\rm Aut}(\c) \cong S_n \times \mathbb{Z}_2$, where $S_n$ is the symmetric group of degree $n$ and $\mathbb{Z}_2$ is the additive group of integers modulo $2$. Further, for $n \geq 4$, we prove that End$(\c) = $Aut$(\c)$. In order to provide an answer to the question posed in \cite{a.Araujo2011}, we ascertained  a class of inverse semigroups whose commuting graph is Hamiltonian.
\end{abstract}

\subjclass[2010]{05C25}

\keywords{Commuting graph, Brandt semigroups, Graphs, Automorphism group of a graph}

\maketitle

\section{Introduction}
The commuting graph of a finite non-abelian group $G$ is a simple graph (undirected graph with no loops or repeated edges) whose vertices are the  non-central elements of $G$ and two distinct vertices $x, y$ are adjacent if $xy = yx$. Commuting graphs of various  groups have been studied by several authors (cf. \cite{a.bates2003commuting,a.bates2003,a.bundy2006connectivity,a.Iranmanesh2008commuting}). Moreover, \cite{a.Segev1999,a.segev2001commuting,a.segev2002anisotropic} use combinatorial parameters of certain commuting graphs to establish long standing conjectures in the theory of division algebras. The concept of commuting graph can be defined analogously for semigroups. Let $S$ be a finite non-commutative semigroup with centre $Z(S) = \{a \in S: ab = ba \; \text{for all} \; b \in S\}$. The commuting graph of  $S$, denoted by $\Delta(S)$, is the simple graph  whose vertex set is $S - Z(S)$ and two distinct vertices $a, b$ are adjacent if $ab = ba$. In 2011, Ara$\acute{\text{u}}$jo et al. \cite{a.Araujo2011} initiated the study of commuting graph on finite semigroups and calculated the diameter of commuting  graphs of various ideals of full transformation semigroup. Also, for every natural number $n \geq 2$, a finite semigroup whose commuting graph has diameter $n$ has been constructed in \cite{a.Araujo2011}. Further, various graph theoretic properties (viz. clique number and diameter) of $\Delta(\mathcal{I}(X))$, where $\mathcal{I}(X)$ is the symmetric inverse semigroup of partial injective transformations on a finite set $X$, have been studied in \cite{a.Araujo2015}.  In order to provide answers to few of the problems posed in \cite{a.Araujo2011}, T. Bauer et al. \cite{a.Bauer2016} have established a semigroup whose knit degree is $3$. For a wider class of semigroups, it was shown in \cite{a.Bauer2016}, that the diameter of their commuting graphs is effectively bounded by the rank of the semigroups. Further, the construction of monomial semigroups with a bounded number of generators, whose commuting graphs have an arbitrary clique number have been provided in \cite{a.Bauer2016}. Motivated with the work in \cite{a.Araujo2011} and the questions posed in its Section 6, in this paper,  we study various graph invariants of the commuting graph associated with an important class of inverse semigroups. This work leads to answer partially to some of the problems posed in \cite{a.Araujo2011}. Moreover, the results obtained in this paper may be useful into the study of commuting graphs on completely $0$-simple inverse semigroups.


  Let $G$ be a finite group. For a natural number $n$, we write $[n] = \{1, 2, \ldots, n\}$. Recall that the \emph{Brandt semigroup}, denoted by $B_n(G)$, has underlying set $([n]\times G \times[n])\cup \{0\}$ and the binary operation  \lq$\cdot$\rq $\;$  on $B_n(G)$ is defined as
 \[ (i,a,j) \cdot (k,b,l) =
                 \left\{\begin{array}{cl}
                 (i,ab, l) & \text {if $j = k$;}  \\
                 0     & \text {if $j \neq k $}
                   \end{array}\right.  \]
 and, for all $\alpha \in B_n (G)$, $\alpha \cdot 0 = 0 \cdot \alpha = 0$. Note that $0$ is the (two sided) zero element in $B_n(G)$. 
 
 \begin{theorem}[{\cite[Theorem 5.1.8]{b.Howie}}]\label{0-simple}
 A finite semigroup $S$ is both completely $0$-simple and an inverse semigroup if and only if $S$ is isomorphic to the semigroup $B_n(G)$ for some group $G$.
 \end{theorem}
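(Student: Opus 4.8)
This is the classical Rees--Suschkewitsch structure theorem specialised to the inverse case, so the plan is to obtain it from the Rees matrix representation of completely $0$-simple semigroups. Recall that every completely $0$-simple semigroup is isomorphic to a Rees matrix semigroup $M=\mathcal{M}^0[G;I,\Lambda;P]$ over a group $G$, where $I$ and $\Lambda$ are non-empty index sets, $P=(p_{\lambda i})$ is a $\Lambda\times I$ matrix over $G\cup\{0\}$ that is \emph{regular} (no zero row and no zero column), and multiplication on $(I\times G\times\Lambda)\cup\{0\}$ is given by $(i,a,\lambda)(j,b,\mu)=(i,\,a\,p_{\lambda j}\,b,\,\mu)$ if $p_{\lambda j}\neq 0$ and $0$ otherwise. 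I would also use two standard facts: the non-zero idempotents of $M$ are exactly the elements $(i,p_{\lambda i}^{-1},\lambda)$ with $p_{\lambda i}\neq 0$, and a regular semigroup is inverse if and only if its idempotents commute.

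For the implication ``$\Leftarrow$'', I would recognise $B_n(G)$ as the Rees matrix semigroup $\mathcal{M}^0[G;[n],[n];I_n]$, where $I_n$ is the $n\times n$ matrix over $G\cup\{0\}$ with the group identity $e$ on the diagonal and $0$ off it: substituting this $P$ into the Rees product returns verbatim the rule defining $B_n(G)$. Since $I_n$ is regular, $B_n(G)$ is completely $0$-simple. Solving $(i,a,j)^2=(i,a,j)$ shows its non-zero idempotents are precisely $(1,e,1),\dots,(n,e,n)$, and the product of two distinct ones is $0$ in either order, so the idempotents commute; as $B_n(G)$ is clearly regular (the inverse of $(i,a,j)$ is $(j,a^{-1},i)$), it is an inverse semigroup.

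For ``$\Rightarrow$'', let $S$ be finite, completely $0$-simple and inverse, and fix an isomorphism $S\cong M=\mathcal{M}^0[G;I,\Lambda;P]$; since $S$ is finite, so are $G$, $I$ and $\Lambda$. As $S$ is inverse, its idempotents commute. Suppose some row $\lambda$ of $P$ had two non-zero entries $p_{\lambda i}$ and $p_{\lambda j}$ with $i\neq j$; then for the idempotents $e=(i,p_{\lambda i}^{-1},\lambda)$ and $f=(j,p_{\lambda j}^{-1},\lambda)$ the product formula gives $ef=(i,p_{\lambda i}^{-1}p_{\lambda j}p_{\lambda j}^{-1},\lambda)=e$ and, symmetrically, $fe=f$, which is absurd since $e\neq f$. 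A dual argument rules out two non-zero entries in any column, and together with the regularity of $P$ this forces every row and every column of $P$ to contain exactly one non-zero entry. Hence the map sending $i\in I$ to the unique $\lambda\in\Lambda$ with $p_{\lambda i}\neq 0$ is a bijection $I\to\Lambda$, so $|I|=|\Lambda|=:n$. Relabelling $\Lambda$ along this bijection makes $P$ diagonal with non-zero diagonal entries $q_1,\dots,q_n\in G$, and the standard rescaling isomorphism of Rees matrix semigroups that left-multiplies the $i$-th row of $P$ by $q_i^{-1}$ turns $P$ into $I_n$; composing these isomorphisms identifies $S$ with $\mathcal{M}^0[G;[n],[n];I_n]=B_n(G)$.

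The only place that needs genuine care is the bookkeeping in ``$\Rightarrow$'': checking that commuting idempotents really do preclude two non-zero entries in a row or a column, and then verifying that the relabelling yields a genuinely diagonal matrix so that the final rescaling is a legitimate isomorphism onto $B_n(G)$. Everything else is a direct verification with the explicit multiplication rules.
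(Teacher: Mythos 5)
The paper states this result without proof, simply citing \cite[Theorem 5.1.8]{b.Howie}, and your argument is a correct reconstruction of the standard proof via the Rees matrix representation: the forward direction (commuting idempotents force exactly one non-zero entry in each row and each column of the sandwich matrix $P$, so $|I|=|\Lambda|$ and $P$ normalises to $I_n$) and the converse verification that $B_n(G)=\mathcal{M}^0[G;[n],[n];I_n]$ is a regular, completely $0$-simple semigroup with commuting idempotents are both sound. This is essentially the argument in the cited source, so there is nothing to add.
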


 Since all completely $0$-simple inverse semigroups are exhausted by Brandt semigroups, their consideration seems interesting and useful in various aspects. Brandt semigroups  have been studied extensively by various authors, see \cite{a.Volkov2009,a.Morita,a.Sadar2009} and the references therein. When $G$ is the  trivial group, the Brandt semigroup $B_{n}(\{e\})$ is denoted by $B_n$. Thus, the semigroup $B_n$ can be described as the set $([n]\times[n])\cup \{ 0\}$, where $0$ is the zero element and the product $(i, j)\cdot (k, l) = (i, l)$, if $j = k $ and $0$, otherwise. Since Green's $\mathcal{H}$-class of $B_n$ is trivial, it is also known as aperiodic Brandt semigroup. As a Rees matrix semigroup \cite{b.Howie}, $B_n$ is isomorphic to the Rees matrix semigroup $M^0(\{1,\dots, n\}, 1, \{1,\dots, n\}, I_n)$, where $I_n$ is the $n \times n$ identity matrix. Brandt semigroup $B_n$ play an important role in inverse semigroup theory and arises in  number of different ways,  see \cite{a.ciric2000,a.Kamilla2006} and the references therein.  Endomorphism seminear-rings on $B_n$ have been classified  by Gilbert and Samman \cite{a.Gilbert2010}. Further, various aspects of affine near-semirings generated by affine maps on $B_n$ have been studied in \cite{t.jitenderthesis}. The combinatorial study of $B_n$ have been related with theory of matroids and simplicial complexes in \cite{a.Margolis2018}. Various ranks  of $B_n$  have been obtained in \cite{a.Howie1999,a.howie2000rank,b.Mitchell2004}, where some of the ranks of $B_n$ were obtained by using graph theoretic properties of some graph associated  on $B_n$.  Cayley graphs associated with Brandt semigroups have been studied in \cite{a.HaoCayleyBrandt2011, a.KhosraviCayleygraphs2012}.

In this paper, we have investigated various graph theoretic properties of the commuting graph of $B_n$. The paper is arranged as follows. In Section 2, we provide necessary background material and notations used throughout the paper. In Section 3, various graph invariants, namely: diameter, independence number, girth, clique number, chromatic number and vertex connectivity, of $\c$ are obtained. Also, we have shown that $\c$ is Hamiltonian but it is neither planar nor Eulerian. In Section 4, the automorphism group as well as endomorphism monoid of $\c$ is described.

\section{Preliminaries}
In this section, we recall  necessary definitions, results and notations of graph theory from \cite{b.West}.
A graph $\mathcal{G}$ is a pair  $ \mathcal{G} = (V, E)$, where $V = V(\mathcal{G})$ and $E = E(\mathcal{G})$ are the set of vertices and edges of $\mathcal{G}$, respectively. We say that two different vertices $a, b$ are $\mathit{adjacent}$, denoted by $a \sim b$, if there is an edge between $a$ and $b$. We are considering simple graphs, i.e. undirected graphs with no loops  or repeated edges. If $a$ and $b$ are not adjacent, then we write $a \nsim b$. The \emph{neighbourhood} $ N(x) $ of a vertex $x$ is the set all vertices adjacent to $x$ in $ \mathcal G $. Additionally, we denote $N[x] = N(x) \cup \{x\}$. A subgraph  of a graph $\mathcal{G}$ is a graph $\mathcal{G}'$ such that $V(\mathcal{G}') \subseteq V(\mathcal{G})$ and $E(\mathcal{G}') \subseteq E(\mathcal{G})$. A \emph{walk} $\lambda$ in $\mathcal{G}$ from the vertex $u$ to the vertex $w$ is a sequence of  vertices $u = v_1, v_2,\cdots, v_{m} = w$ $(m > 1)$ such that $v_i \sim v_{i + 1}$ for every $i \in \{1, 2, \ldots, m-1\}$. If no edge is repeated in $\lambda$, then it is called a \emph{trail} in $\mathcal{G}$. A trail whose initial and end vertices are identical is called a \emph{closed trail}. A walk is said to be a \emph{path} if no vertex is repeated.  The length of a path is the number of edges it contains. If $U \subseteq V(\mathcal{G})$, then the  subgraph of $\mathcal{G}$ induced by  $U$ is the graph $\mathcal{G}'$ with vertex set $U$, and with two vertices adjacent in $\mathcal{G}'$ if and only if they are adjacent in $\mathcal{G}$. A graph  $\mathcal{G}$ is said to be \emph{connected} if there is a path between every pair of vertex. A graph $\mathcal{G}$ is said to be \emph{complete} if any two distinct vertices are adjacent.  A path that begins and ends on the same vertex is called a \emph{cycle}. A cycle in a graph $\mathcal{G}$ that includes every vertex of $\mathcal{G}$ is called a \emph{Hamiltonian cycle} of $\mathcal{G}$. If $\mathcal{G}$ contains a Hamiltonian cycle, then $\mathcal{G}$ is called a \emph{Hamiltonian graph}.

Also, recall that the \emph{girth} of a graph $\mathcal{G}$ is the length of the shortest cycle in $\mathcal{G}$, if $\mathcal{G}$ has a cycle; otherwise we say the \emph{girth} of $\mathcal{G}$ is $\infty$. The \emph{distance} between vertices $u$ and $w$, denoted by $d(u, w)$, is the length of a minimal path from $u$ to $w$. If there is no path from $u$ to $w$, we say that the distance between $u$ and $w$ is $\infty$. The \emph{diameter} of a connected  graph $\mathcal{G}$ is the maximum distance between two vertices and it is denoted by \emph{diam}($\mathcal G$).  The \emph{degree} of a vertex $v$ is the number of edges incident to $v$ and it is denoted as deg$(v)$. The smallest degree among the vertices of $\mathcal{G}$ is called the \emph{minimum degree} of $\mathcal{G}$ and it is denoted by $\delta(\mathcal{G})$. The \emph{chromatic number} $\chi(\mathcal{G})$ of a graph $\mathcal{G}$ is the smallest positive integer $k$ such that the vertices of $\mathcal{G}$ can be colored in $k$ colors so that no two adjacent vertices share the same color.  A graph $\mathcal{G}$ is \emph{Eulerian} if $\mathcal G$ is both connected and has a closed trail (walk with no repeated edge) containing all the edges of a graph.

\begin{theorem}[{{\cite[Theorem 1.2.26]{b.West}}}]\label{Eulerian}
A connected graph is Eulerian if and only if its every vertex is of even degree.
\end{theorem}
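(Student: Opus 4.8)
The plan is to treat the two implications separately. For the forward direction, assume $\mathcal{G}$ is Eulerian and fix a closed trail $T\colon v_1, v_2, \ldots, v_m = v_1$ that uses every edge of $\mathcal{G}$ exactly once. I would show that for any vertex $v$ the edges of $\mathcal{G}$ incident to $v$ split into pairs: each interior occurrence $v_i = v$ (with $2 \le i \le m-1$) accounts for the two distinct edges $v_{i-1}v_i$ and $v_iv_{i+1}$, while the coincidence $v_1 = v_m$ pairs the first edge $v_1 v_2$ with the last edge $v_{m-1} v_m$ when $v = v_1$. Since $T$ exhausts the edge set and is a trail (no edge repeated), this exhibits $\deg(v)$ as an even number.

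For the converse, assume $\mathcal{G}$ is connected with all degrees even, and (using finiteness) pick a trail $T$ in $\mathcal{G}$ of \emph{maximum} length. First I would show that $T$ is closed: if its endpoints $u$ and $w$ were distinct, then exactly an odd number of the edges of $T$ meet $w$ (one from the final step, two from each intermediate visit), so because $\deg_{\mathcal{G}}(w)$ is even there is an edge at $w$ outside $T$, and appending it lengthens $T$ --- contradicting maximality. Next I would show that $T$ contains every edge: otherwise, by connectedness of $\mathcal{G}$ some edge $e \notin T$ is incident to a vertex $v$ lying on $T$; since $T$ is a closed trail it can be re-read as a closed trail based at $v$, after which traversing $e$ yields a strictly longer trail --- again a contradiction. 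Hence $T$ is a closed trail through all edges, so $\mathcal{G}$ is Eulerian.

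The only genuinely delicate point is the last step of the converse: showing a longest trail already covers all edges. It relies on two small facts --- that connectedness lets us find an unused edge touching the current trail, and that a closed trail may be cyclically rotated to begin at any of its vertices --- both of which I would state explicitly. An alternative is induction on $|E(\mathcal{G})|$: the even-degree hypothesis forces every nonisolated vertex to have degree at least $2$, so $\mathcal{G}$ contains a cycle $C$; deleting $E(C)$ preserves the even-degree property, the induction hypothesis supplies an Eulerian closed trail in each component of the resulting graph, and these are spliced into $C$ at shared vertices. This variant trades the rotation argument for a splicing argument of comparable length, so for brevity I would likely present the maximum-trail proof.
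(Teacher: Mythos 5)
The paper does not prove this statement at all: it is quoted verbatim as Theorem 1.2.26 of West's \emph{Introduction to Graph Theory} and used as a black box, so there is no in-paper argument to compare yours against. Your proposal is a correct and complete rendition of the standard maximal-trail proof of Euler's theorem (pairing of edge-ends for the forward direction; parity of the endpoint plus rotation of a closed trail for the converse), and the two ``small facts'' you flag --- that connectedness yields an unused edge meeting the trail, and that a closed trail can be re-based at any of its vertices --- are exactly the points that need explicit statement. Nothing further is required.
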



A \emph{clique} of a graph $\mathcal{G}$ is a complete subgraph of $\mathcal{G}$ and the number of vertices in a  clique of maximum size is called the \emph{clique number} of $\mathcal{G}$ and it is denoted by $\omega({\mathcal{G}})$. The graph $\mathcal{G}$ is \emph{perfect} if $\omega(\mathcal{G}') = \chi(\mathcal{G}')$ for every induced subgraph $\mathcal{G}'$ of $\mathcal{G}$. An \emph{independent set} of a graph $\mathcal{G}$ is a subset  of $V(\mathcal{G})$ such that no two vertices in the subset are adjacent in $\mathcal{G}$. The \emph{independence number} of  $\mathcal{G}$ is the maximum size of an independent  set, it is denoted by $\alpha(\mathcal{G})$. A graph $\mathcal{G}$ is \emph{bipartite} if $V(\mathcal{G})$ is the union of two disjoint independent sets. By {\cite[Theorem 1.2.18]{b.West}}, graph $\mathcal{G}$ is bipartite if and only if it does not contain an odd length cycle. Also, recall that a dominating set $D$ of a graph $\mathcal{G}$ is a subset of the vertex set such that every vertex not in $D$ is adjacent to some vertex in $D$ and the number of vertices in a smallest dominating set of $\mathcal{G}$ is called the \emph{dominance number} of $\mathcal{G}$.  A \emph{planar graph} is a graph that can be embedded in the plane, i.e. it can be drawn on the plane in such a way that its edges intersect only at their endpoints.


A \emph{vertex (edge) cut-set} in a connected graph $\mathcal{G}$ is a set of vertices (edges) whose deletion increases the number of connected components of $\mathcal{G}$. The \emph{vertex connectivity} (\emph{edge connectivity}) of a connected graph $\mathcal{G}$ is the minimum size of a vertex (edge) cut-set and it is denoted by $\kappa(\mathcal{G})$ ($\kappa'(\mathcal{G})$). For $k \ge 1$, graph $\mathcal{G}$ is \emph{$k$-connected} if $\kappa(\mathcal{G}) \ge k$. It is well known that $\kappa(\mathcal{G}) \le \kappa'(\mathcal{G}) \le \delta(\mathcal{G})$. An \emph{edge cover} in a graph $\mathcal{G}$ without isolated vertices is a set $L$ of edges such that every vertex of $\mathcal{G}$ is incident to some edge of $L$. The minimum cardinality of an edge cover in $\mathcal{G}$ is called the \emph{edge covering number}, it is denoted by $\alpha'(\mathcal{G})$. A \emph{vertex cover} of a graph $\mathcal{G}$ is a set $Q$ of vertices such that it contains at least one endpoint of every edge of $\mathcal{G}$. The minimum cardinality of a vertex cover in $\mathcal{G}$ is called the \emph{vertex covering number}, it is denoted by $\alpha(\mathcal{G})$. A \emph{matching} in a graph $\mathcal{G}$ is a set of edges with no share endpoints and the maximum cardinality of a matching is called the \emph{matching number} and it is denoted by $\alpha'(\mathcal{G})$. We have the following equalities involving the above parameters.
\begin{lemma}\label{vertex-covering-matching number}
	Consider a graph $\mathcal G$.
	\begin{enumerate}
		\item[(i)] $\alpha(\mathcal G) + \beta(\mathcal G) = |V(\mathcal G)|$.
		\item[(ii)] If $\mathcal G$ has no isolated vertices,  $\alpha'(\mathcal G) + \beta'(\mathcal G) = |V(\mathcal G)|$.
	\end{enumerate}
\end{lemma}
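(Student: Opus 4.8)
The plan is to prove the two Gallai-type identities separately, in each case by setting up an explicit correspondence between the optimal configurations involved. Before starting I would fix the notation, since the preliminaries above inadvertently use $\alpha$ (and $\alpha'$) for two different parameters each: throughout, $\alpha(\mathcal G)$ denotes the independence number, $\beta(\mathcal G)$ the vertex covering number, $\alpha'(\mathcal G)$ the matching number and $\beta'(\mathcal G)$ the edge covering number.

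For (i) the crux is the elementary observation that a set $S \subseteq V(\mathcal G)$ is independent if and only if $V(\mathcal G) \setminus S$ is a vertex cover: the statement ``$S$ contains no edge'' is exactly the statement ``every edge has an endpoint outside $S$''. Hence $S \mapsto V(\mathcal G)\setminus S$ is an inclusion-reversing bijection from the family of independent sets of $\mathcal G$ onto the family of its vertex covers, so it carries a maximum independent set to a minimum vertex cover. Comparing cardinalities gives $\alpha(\mathcal G) + \beta(\mathcal G) = |V(\mathcal G)|$ immediately.

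For (ii), assume $\mathcal G$ has no isolated vertices and prove the two inequalities. To get $\beta'(\mathcal G) \le |V(\mathcal G)| - \alpha'(\mathcal G)$: start from a maximum matching $M$ and, for each of the $|V(\mathcal G)| - 2|M|$ vertices left unsaturated by $M$, choose one incident edge (possible since there are no isolated vertices); the union of $M$ with these extra edges is an edge cover of size at most $|M| + (|V(\mathcal G)| - 2|M|) = |V(\mathcal G)| - |M|$. To get $\alpha'(\mathcal G) \ge |V(\mathcal G)| - \beta'(\mathcal G)$: take a minimum edge cover $L$ and note that the spanning subgraph $(V(\mathcal G), L)$ can contain neither a path with three edges nor a cycle, since in either case some edge of $L$ could be deleted while still leaving an edge cover; hence every connected component of $(V(\mathcal G), L)$ is a star. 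If there are $c$ components, then counting vertices star-by-star (a star with $k$ edges has $k+1$ vertices) gives $|V(\mathcal G)| = |L| + c$, so $c = |V(\mathcal G)| - \beta'(\mathcal G)$; picking one edge from each star yields a matching of size $c$. Combining the two inequalities gives the identity.

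The arithmetic here is routine; the one step that needs genuine care is the structural claim in (ii) that a minimum edge cover is a vertex-disjoint union of stars, i.e. the use of minimality of $L$ to rule out both a three-edge path and a cycle among its edges. Everything else is bookkeeping, and one could alternatively just appeal directly to Gallai's theorem as stated in \cite{b.West}.
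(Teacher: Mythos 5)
Your proof is correct. The paper offers no proof of this lemma at all --- it is quoted as a standard fact (Gallai's theorem, cf.\ West) --- so there is no argument of the authors' to compare against; what you give is precisely the classical textbook proof: the complementation bijection between independent sets and vertex covers for (i), and for (ii) the two inequalities obtained by augmenting a maximum matching to an edge cover and by observing that a minimum edge cover decomposes into vertex-disjoint stars. Your handling of the one delicate point (using minimality of $L$ to exclude both a three-edge path and a cycle, so that each component is a star) is complete, and your remark disambiguating the paper's clashing uses of $\alpha$ and $\alpha'$ is apt.
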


For  vertices $u$ and $v$ in a graph $\mathcal G$, we say that $z$ \emph{strongly resolves} $u$ and $v$ if there exists a shortest path from $z$ to $u$ containing $v$, or a shortest path from $z$ to $v$ containing $u$. A subset $U$ of $V(\mathcal G)$ is a \emph{strong resolving set} of $\mathcal G$ if every pair of vertices of $\mathcal G$ is strongly resolved by some vertex of $U$. The least cardinality of a strong resolving set of $\mathcal G$ is called the \emph{strong metric dimension} of $\mathcal G$ and is denoted by $\operatorname{sdim}(\mathcal G)$. For  vertices $u$ and $v$ in a graph $\mathcal G$, we write $u\equiv v$ if $N[u] = N[v]$. Notice that that $\equiv$ is an equivalence relation on $V(\mathcal G)$.
We denote by $\widehat{v}$ the $\equiv$-class containing a vertex $v$ of $\mathcal G$.
Consider a graph $\widehat{\mathcal G}$ whose vertex set is the set of all $\equiv$-classes, and vertices $\widehat{u}$ and  $\widehat{v}$ are adjacent if $u$ and $v$ are adjacent in $\mathcal G$. This graph is well-defined because in $\mathcal G$, $w \sim v$ for all $w \in \widehat{u}$ if and only if $u \sim v$.  We observe that $\widehat{\mathcal G}$ is isomorphic to the subgraph $\mathcal{R}_{\mathcal G}$ of $\mathcal G$ induced by a set of vertices consisting of exactly one element from each $\equiv$-class. Subsequently, we have the following result of \cite{a.strongmetricdim2018} with $\omega(\mathcal{R}_{\mathcal G})$ replaced by $\omega(\widehat{\mathcal G})$.

\begin{theorem}[{\cite[Theorem 2.2 ]{a.strongmetricdim2018}}]\label{strong-metric-dim}
	Let $\mathcal G$ be a graph with diameter $2$. Then  sdim$(\mathcal G) = |V(\mathcal G)| - \omega(\widehat{\mathcal G})$.
\end{theorem}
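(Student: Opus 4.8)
The plan is to route the whole computation through the \emph{strong resolving graph}. Recall that a vertex $u$ is \emph{maximally distant} from $v$ in $\mathcal{G}$ if $d(u,w)\le d(u,v)$ for every $w\in N(v)$, that $u$ and $v$ are \emph{mutually maximally distant} (MMD) when each is maximally distant from the other, and that the strong resolving graph $G_{SR}$ has vertex set $V(\mathcal{G})$ with $u$ adjacent to $v$ exactly when $u,v$ are MMD in $\mathcal{G}$. By the theorem of Oellermann and Peters-Fransen, a set $W\subseteq V(\mathcal{G})$ is a strong resolving set of $\mathcal{G}$ if and only if $W$ is a vertex cover of $G_{SR}$; hence $\operatorname{sdim}(\mathcal{G})$ equals the vertex-cover number of $G_{SR}$, which by the Gallai identity equals $|V(\mathcal{G})|-\alpha(G_{SR})$, where $\alpha(G_{SR})$ is the independence number of $G_{SR}$. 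Thus it suffices to prove $\alpha(G_{SR})=\omega(\widehat{\mathcal{G}})$.

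To compute $\alpha(G_{SR})$ I would first identify the edges of $G_{SR}$ using the hypothesis $\operatorname{diam}(\mathcal{G})=2$. If $d(u,v)=2$, then $d(u,w)\le 2=d(u,v)$ for every $w\in N(v)$ and symmetrically, so $u,v$ are MMD; hence every pair of vertices at distance $2$ is an edge of $G_{SR}$. If $u\sim v$, then $u$ is maximally distant from $v$ precisely when $d(u,w)\le 1$ for all $w\in N(v)$, i.e. $N(v)\subseteq N[u]$; imposing the symmetric condition and using $u\sim v$ forces $N[u]=N[v]$, that is $u\equiv v$, and conversely if $u\equiv v$ with $u\ne v$ then $v\in N[v]=N[u]$ gives $u\sim v$ together with the MMD property. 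Therefore the edge set of $G_{SR}$ is $\{\,uv : d(u,v)=2\,\}\cup\{\,uv : u\sim v\text{ and }u\equiv v\,\}$, so a set $S\subseteq V(\mathcal{G})$ is independent in $G_{SR}$ if and only if any two of its members are adjacent in $\mathcal{G}$ and lie in distinct $\equiv$-classes; equivalently, $S$ is a clique of $\mathcal{G}$ meeting each $\equiv$-class at most once.

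Finally I would match such cliques with cliques of $\widehat{\mathcal{G}}$. As noted in the discussion preceding the statement, $\widehat{u}\sim\widehat{v}$ in $\widehat{\mathcal{G}}$ holds if and only if $u\sim v$ in $\mathcal{G}$, independently of the chosen representatives; hence selecting one vertex from each class of a clique of $\widehat{\mathcal{G}}$ yields a clique of $\mathcal{G}$ meeting each $\equiv$-class at most once and of the same cardinality, while any clique of $\mathcal{G}$ of the latter kind projects onto a clique of $\widehat{\mathcal{G}}$ of equal cardinality. Therefore $\alpha(G_{SR})=\omega(\widehat{\mathcal{G}})$, and combined with the first paragraph this gives $\operatorname{sdim}(\mathcal{G})=|V(\mathcal{G})|-\omega(\widehat{\mathcal{G}})$. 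The genuinely delicate point is the equivalence, for adjacent $u,v$, between being MMD and satisfying $N[u]=N[v]$; once this and the Oellermann--Peters-Fransen reduction are in place the rest is bookkeeping, and one should only double-check that $\operatorname{diam}(\mathcal{G})=2$ is really used at both places where distance-$2$ pairs are declared MMD, since the argument fails for larger diameter.
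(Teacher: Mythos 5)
Your proof is correct. Note that the paper does not prove this statement at all --- it is imported verbatim from the cited reference \cite{a.strongmetricdim2018}, whose own argument is essentially the one you give: reduce $\operatorname{sdim}(\mathcal G)$ to the vertex cover number of the strong resolving graph via Oellermann--Peters-Fransen, observe that under $\operatorname{diam}(\mathcal G)=2$ the mutually maximally distant pairs are exactly the distance-$2$ pairs together with the adjacent pairs satisfying $N[u]=N[v]$, and then identify the independence number of $G_{SR}$ with $\omega(\widehat{\mathcal G})$. Your handling of the delicate adjacent case ($N(u)\subseteq N[v]$ and $N(v)\subseteq N[u]$ together with $u\sim v$ forcing $N[u]=N[v]$) is exactly right, and defining $G_{SR}$ on all of $V(\mathcal G)$ so that isolated vertices are absorbed by the Gallai identity is handled consistently.
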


When $\mathcal G = \c$, we denote $\widehat{\mathcal G}$ by $\widehat{\Delta}(B_n)$.

The \emph{commuting  graph} of a finite semigroup $S$, denoted by $\Delta(S)$, is the simple graph whose vertices are the non-central elements of $S$ and two distinct vertices $x, y$ are adjacent if $xy = yx$. 

\section{Graph invariants of $\c$}
In this section, we obtained various graph invariants of $\c$. We begin with the results concerning the neighbors (degree) of all  the vertices of $\c$. 
\begin{lemma}\label{nbd}
	In the graph $\c$, we have the following:
	\begin{enumerate}
		\item[\rm (i)] N $[(i, i)] = \{(j, k) \; : \; j,  k \in [n], \; j, k \neq i \} \cup \{(i, i)\}$.
		\item[\rm (ii)]  N $[(i, j)] = \{(i, l) \; : \; l \in [n], l \neq i, j \} \cup \{(l, j) \; : \; l \in [n], l \neq i, j \} \cup \{(k, l) \; : \; k, l \in [n], \; k \neq i, j \; \text{and} \; l \neq i, j  \} \cup \{(i, j)\}$, where $i \neq j$.
	\end{enumerate}
\end{lemma}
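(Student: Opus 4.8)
The plan is a direct case analysis driven by the multiplication table of $B_n$. First I would record that, for $n\ge 2$, the centre of $B_n$ is $\{0\}$: clearly $0$ is central, while any $(a,b)\in[n]\times[n]$ is non-central because $(a,b)(b,b)=(a,b)\neq 0=(b,b)(a,b)$ when $a\neq b$, and $(a,a)(a,c)=(a,c)\neq 0=(a,c)(a,a)$ for some $c\neq a$ when $a=b$. Hence $V(\c)=[n]\times[n]$, so that all the sets appearing in the statement are genuine sets of vertices. Throughout I would use only the elementary fact that, for $\alpha=(p,q)$ and $\beta=(r,s)$, the product $\alpha\beta$ equals $(p,s)$ if $q=r$ and equals $0$ otherwise; in particular $\alpha\beta\neq 0$ precisely when $q=r$, and then $\alpha\beta$ has ``outer'' coordinates $p$ and $s$.

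For part (i), fix the vertex $(i,i)$ and take any $(a,b)\neq(i,i)$. From the rule, $(i,i)(a,b)=(i,b)$ if $a=i$ and $0$ otherwise, while $(a,b)(i,i)=(a,i)$ if $b=i$ and $0$ otherwise. Running through the cases: if $a=i$ then $b\neq i$, so the left product is the nonzero element $(i,b)$ but the right product is $0$; if $b=i$ then $a\neq i$ and the situation is symmetric; and if $a\neq i$ and $b\neq i$ then both products are $0$. Thus $(a,b)\sim(i,i)$ if and only if $a\neq i$ and $b\neq i$, which together with $(i,i)$ itself gives precisely the set in (i).

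For part (ii), fix $(i,j)$ with $i\neq j$ and take $(a,b)\neq(i,j)$. Here $(i,j)(a,b)=(i,b)$ if $a=j$ and $0$ otherwise, and $(a,b)(i,j)=(a,j)$ if $b=i$ and $0$ otherwise. The crucial observation is that, because $i\neq j$, these two products can never be both equal and nonzero: if $a=j$, then the left product is $(i,b)\neq 0$, so for equality we would need $b=i$ and $(i,b)=(a,j)$, i.e. $(i,i)=(j,j)$, which is impossible; the case $b=i$ is symmetric. Hence $(a,b)\sim(i,j)$ forces $a\neq j$ and $b\neq i$, and conversely, when $a\neq j$ and $b\neq i$, both products vanish. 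So $N[(i,j)]=\{(a,b):a\neq j,\ b\neq i\}$. Finally I would rewrite this set by splitting on the first coordinate: the vertices with first coordinate $i$ are exactly $(i,j)$ together with $\{(i,l):l\neq i,j\}$; those with first coordinate different from $i$ and $j$ split, according to whether the second coordinate equals $j$, into $\{(l,j):l\neq i,j\}$ and $\{(k,l):k,l\neq i,j\}$; and first coordinate $j$ is excluded. This is exactly the displayed expression.

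I do not anticipate a genuine obstacle, since the whole argument is a finite check; the only point requiring care is the last step of (ii), verifying that the four listed pieces partition $\{(a,b):a\neq j,\ b\neq i\}$ with no overlap or omission — noting in particular that $\{(k,l):k,l\neq i,j\}$ is simply empty when $n=2$.
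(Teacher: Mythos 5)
Your proposal is correct and follows essentially the same route as the paper: a direct case analysis of the products $(i,j)(a,b)$ and $(a,b)(i,j)$ using the multiplication rule of $B_n$, yielding that $(a,b)\sim(i,i)$ iff $a,b\neq i$ and $(a,b)\sim(i,j)$ iff $a\neq j$ and $b\neq i$, followed by the straightforward repartition of the latter set. Your explicit verification that $Z(B_n)=\{0\}$ (so that every pair $(a,b)$ is indeed a vertex) is a detail the paper leaves implicit, but otherwise the arguments coincide.
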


\begin{proof}
The result is straightforward for $n = 2$. Now, let $n \ge 3$. Note that $(k, l) \sim (i, i)$ in $\c$ if and only if $k \ne i$ and $l \ne i$.  Now, let $(i, j)$ be a vertex of $\c$, where $i \ne j$. Then vertices $(k, l) \sim (i, j)$ if and only if $(k, l)$ satisfies one of the following:
\begin{enumerate}
\item[(a)]  where $k \ne i, j$ and $l \ne i, j$. 
\item[(b)]  where $k = i$ and  $l \ne i, j$.
\item[(c)]  where $k \ne i, j$ and $l = j$.
\end{enumerate}
\end{proof}

In view of the above proof, we have the following useful remark.
\begin{remark}\label{r.non adjacent vertices}
Two distinct vertices $(i, j)$ and $(k, i)$ are not adjacent in $\c$.
\end{remark}

\begin{corollary}\label{degree of all B_n}
In the commuting graph $\c$,  the degree of idempotent vertices is $(n - 1)^2$ and the degree of non-idempotent vertices is $n(n - 2)$. 
\end{corollary}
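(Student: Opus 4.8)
The plan is to read off both degree counts directly from Lemma \ref{nbd}, the only real work being a careful disjointness check on the sets appearing in part (ii). First I would record the two preliminary observations that pin down the vertex set: since $Z(B_n) = \{0\}$ (for instance, $(1,1)$ fails to commute with $(1,2)$), the vertices of $\c$ are precisely the $n^2$ pairs in $[n]\times[n]$; among these, the idempotent vertices are exactly the diagonal pairs $(i,i)$ with $i \in [n]$, and the non-idempotent vertices are exactly the pairs $(i,j)$ with $i \neq j$.

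For an idempotent vertex $(i,i)$, I would invoke Lemma \ref{nbd}(i): $N[(i,i)] = \{(j,k) : j,k \in [n],\ j,k \neq i\} \cup \{(i,i)\}$. The first set has $(n-1)^2$ elements and does not contain $(i,i)$, so $|N[(i,i)]| = (n-1)^2 + 1$ and hence $\deg((i,i)) = |N[(i,i)]| - 1 = (n-1)^2$.

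For a non-idempotent vertex $(i,j)$ with $i \neq j$, I would invoke Lemma \ref{nbd}(ii), which writes $N[(i,j)]$ as the union of $A = \{(i,l) : l \neq i,j\}$, $B = \{(l,j) : l \neq i,j\}$, $C = \{(k,l) : k \neq i,j \text{ and } l \neq i,j\}$, and $\{(i,j)\}$. The key step is to observe these four sets are pairwise disjoint: $A$ has first coordinate $i$ while $B$ and $C$ have first coordinate $\neq i$; $B$ has second coordinate $j$ while $C$ has second coordinate $\neq j$; and $(i,j)$ lies in none of $A,B,C$ since in $A$ the second coordinate avoids $j$, in $B$ the first coordinate avoids $i$, and in $C$ both coordinates avoid $i,j$. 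Since $|A| = |B| = n-2$ and $|C| = (n-2)^2$, we get
\[
|N[(i,j)]| = 2(n-2) + (n-2)^2 + 1 = \big((n-2)+1\big)^2 = (n-1)^2,
\]
so $\deg((i,j)) = (n-1)^2 - 1 = n^2 - 2n = n(n-2)$.

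I do not anticipate any genuine obstacle here; the argument is a bookkeeping exercise once Lemma \ref{nbd} is in hand, and the only place to be slightly attentive is the disjointness claim in part (ii). It is worth noting that the formulas are also consistent with the degenerate case $n = 2$, where $n(n-2) = 0$ correctly reflects that the off-diagonal vertices $(1,2)$ and $(2,1)$ are isolated in $\Delta(B_2)$, while the diagonal vertices have degree $(2-1)^2 = 1$.
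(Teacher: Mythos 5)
Your proposal is correct and follows exactly the route the paper intends: the corollary is stated as an immediate consequence of Lemma \ref{nbd}, and your careful count of the (pairwise disjoint) sets in that lemma, giving $|N[(i,i)]|-1=(n-1)^2$ and $|N[(i,j)]|-1=(n-2)^2+2(n-2)=n(n-2)$, is precisely the omitted bookkeeping. The sanity check at $n=2$ is a nice touch but not needed.
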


\begin{corollary}
The minimum degree of $\c$ is $n(n-2)$.
\end{corollary}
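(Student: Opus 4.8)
The plan is to extract the minimum degree directly from Corollary~\ref{degree of all B_n}, since that corollary already partitions the vertex set of $\c$ according to degree. Concretely, every vertex of $\c$ is either an idempotent $(i,i)$, having degree $(n-1)^2$, or a non-idempotent $(i,j)$ with $i \neq j$, having degree $n(n-2)$. Both kinds of vertex are present in $\c$ for every $n \geq 2$ (for instance $(1,1)$ and $(1,2)$ are always non-central and hence vertices of $\c$), so $\delta(\c) = \min\{(n-1)^2,\, n(n-2)\}$, and the minimum is genuinely attained.

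It then remains only to compare these two values. Expanding, $(n-1)^2 = n^2 - 2n + 1 = n(n-2) + 1$, so $(n-1)^2 > n(n-2)$ for all $n$, and therefore $\delta(\c) = n(n-2)$, the minimum being achieved at each non-idempotent vertex. The only step of any substance is this one-line arithmetic inequality, so there is no real obstacle; the statement is essentially a corollary of the degree computation in Corollary~\ref{degree of all B_n}. (One could additionally remark that, combined with the chain $\kappa(\c) \le \kappa'(\c) \le \delta(\c)$ recalled in Section~2, this pins down an upper bound $n(n-2)$ for the vertex and edge connectivity of $\c$, to be used later.)
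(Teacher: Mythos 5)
Your proof is correct and follows exactly the route the paper intends: the corollary is an immediate consequence of Corollary~\ref{degree of all B_n}, and the only content is the observation that $(n-1)^2 = n(n-2)+1 > n(n-2)$, so the minimum is attained at the non-idempotent vertices. Nothing further is needed.
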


\begin{theorem}\label{Hamiltonian}
	For $n \ge 3$, the commuting graph $\c$ satisfies the following properties:
	\begin{enumerate}
		\item[\rm (i)]  $\Delta(B_n)$ is not Eulerian.
		\item[\rm (ii)] The girth of $\Delta(B_n)$ is $3$.
		\item[\rm (iii)]  $\Delta(B_n)$ is Hamiltonian.
		\item[\rm (iv)]  $\Delta(B_n)$ is connected and ${\rm diam}(\Delta(B_n)) = 2$.
	\end{enumerate}
\end{theorem}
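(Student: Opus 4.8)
The backbone of the argument is a single adjacency criterion extracted from Lemma~\ref{nbd}: for distinct vertices $(a,b)$ and $(c,d)$ of $\c$ one has $(a,b)\sim(c,d)$ if and only if $b\neq c$ and $a\neq d$. For idempotents this is part~(i) of Lemma~\ref{nbd}, and for the remaining cases it merely repackages conditions (a)--(c) in the proof of that lemma; equivalently, non-adjacency of $(a,b)$ and $(c,d)$ means $b=c$ or $a=d$, which is the content of Remark~\ref{r.non adjacent vertices}. Once this rule is recorded, all four assertions reduce to short combinatorial checks, and I would prove them in the order (iv), (ii), (i), (iii), since the Eulerian statement needs connectedness.

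For (iv), given distinct non-adjacent vertices $(a,b)$ and $(c,d)$, I would exhibit a common neighbour $(e,f)$ by choosing $e\in[n]\setminus\{b,d\}$ and $f\in[n]\setminus\{a,c\}$; these sets are non-empty because $n\ge 3$, the criterion shows $(e,f)$ is adjacent to both, and $(e,f)$ must be distinct from both (otherwise, say $(e,f)=(a,b)$, one would get $a\neq d$ and $b\neq c$, contradicting non-adjacency, and similarly for $(c,d)$). Hence $\c$ is connected with every pair of vertices at distance at most $2$; as $(1,2)\nsim(2,1)$ by Remark~\ref{r.non adjacent vertices}, the diameter equals $2$. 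Part~(ii) is then immediate: the idempotents $(1,1),(2,2),(3,3)$ are pairwise adjacent (all their products are $0$), so they form a triangle and, $\c$ being simple, its girth is $3$. For (i), I would combine Theorem~\ref{Eulerian} with Corollary~\ref{degree of all B_n}: if $n$ is even then each idempotent vertex has odd degree $(n-1)^2$, and if $n$ is odd then each non-idempotent vertex has odd degree $n(n-2)$; in either case $\c$ has a vertex of odd degree, so the connected graph $\c$ is not Eulerian.

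The substantive part is (iii). For $n\ge 4$ I would invoke Dirac's theorem: $\c$ has $n^2$ vertices and minimum degree $n(n-2)=n^2-2n\ge n^2/2$, hence $\c$ is Hamiltonian. The case $n=3$ must be handled directly, since there $\delta(\c)=3<9/2$ and neither Dirac's nor Ore's condition applies; using the adjacency criterion one checks that
\[
(1,2)-(1,3)-(2,2)-(3,1)-(3,2)-(1,1)-(2,3)-(2,1)-(3,3)-(1,2)
\]
is a Hamiltonian cycle of $\Delta(B_3)$. Thus the only genuinely delicate step is producing this explicit cycle in the small case; everything else follows mechanically from the uniform adjacency rule together with the degree count already established.
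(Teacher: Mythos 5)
Your proposal is correct and follows essentially the same route as the paper: degree parity plus Theorem \ref{Eulerian} for (i), the idempotent triangle for (ii), Dirac's condition $\delta(\c)=n(n-2)\ge n^2/2$ for $n\ge 4$ together with an explicit (and valid) cycle for $n=3$ in (iii), and a common-neighbour argument for (iv). The only cosmetic difference is that your choice of intermediate vertex $(e,f)$ with $e\notin\{b,d\}$, $f\notin\{a,c\}$ handles $n=3$ uniformly, where the paper instead appeals to Figure 1 and uses an idempotent $(i,i)$ as the intermediate vertex for $n\ge 4$.
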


\begin{proof}
\begin{enumerate}
\item[(i)] If $n$ is even, then the ${\rm deg}(i, i) = (n - 1)^2$ is odd. If $n$ is odd, the vertex $(i, j)$, where $i \ne j$, has degree $ n(n-2)$, which is odd. Thus by Theorem \ref{Eulerian}, $\c$ is not Eulerian.

\smallskip

\noindent		
(ii) In order to find the girth of $\c$, note that $(1, 1) \sim (2, 2) \sim (3, 3) \sim (1, 1)$ is a smallest cycle with length $3$. Thus, we have the result.
		
\smallskip

\noindent		
(iii) For $n = 3$, note that $(1, 2) \sim (1, 3) \sim  (2, 3) \sim  (2, 1) \sim (3, 1) \sim  (3, 2) \sim (1, 1) \sim (2, 2)  \sim (3, 3) \sim (1, 2)$ is a Hamiltonian cycle in $\c$. Since the minimum degree of $\c$ is $n(n-2)$ (cf. Lemma \ref{degree of all B_n}), for $n \ge 4$, we have $n(n - 2) - \frac{n^2}{2} = \frac{n}{2}(n - 4) \ge 0$. Hence, by {\cite[Theorem 7.2.8]{b.West}},  $\c$ is Hamiltonian.

\smallskip

\noindent		
(iv) From the Figure 1, note that the ${\rm diam}(\Delta(B_3))$ is $2$. Now for $n \ge 4$, let $(a, b), (c, d) \in B_n$. If all of $a, b, c, d$ are distinct, then $(a, b) \sim (c, d)$. If at most three from $a, b, c, d$ are distinct, then there exists $i \ne a, b, c, d$ such that $(a, b) \sim (i, i) \sim (c, d)$ is a path. Thus, any two vertices of $\c$ are connected by a path with maximum distance two.
	\end{enumerate}
\end{proof}
\begin{figure}[h!]
	\centering
	\includegraphics[width=0.5 \textwidth]{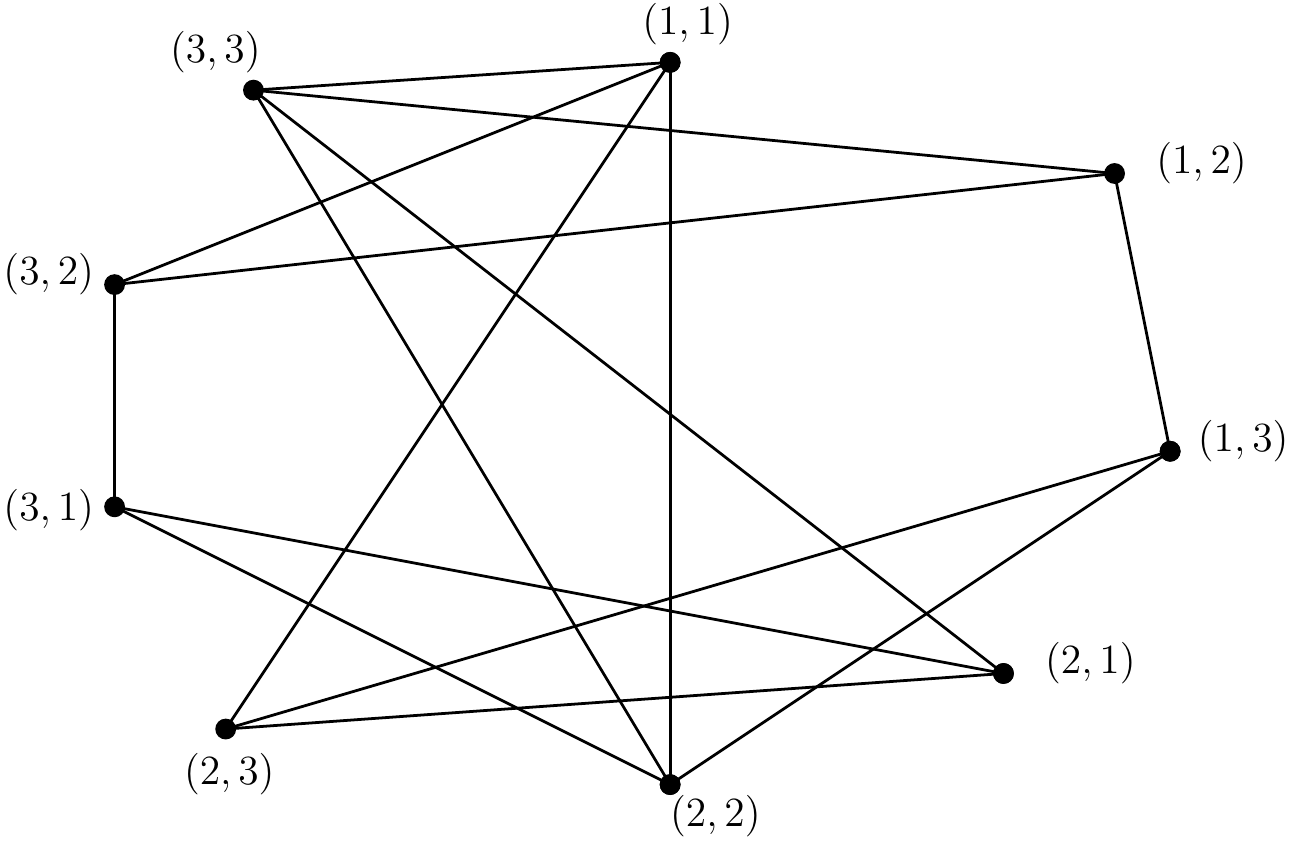}
	\caption{ The commuting graph of $B_3$}\label{B_3}
\end{figure}

Since $\c$ is Hamiltonian so we have the following corollary.

\begin{corollary} For $n \geq 3$, the matching  number of  $\c$ is $\left\lfloor \frac{n^2}{2}\right \rfloor$.
\end{corollary}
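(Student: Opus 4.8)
The plan is to combine the trivial upper bound on a matching with the Hamiltonicity of $\c$ already established in Theorem~\ref{Hamiltonian}(iii). Write $\beta'(\c)$ for the matching number. For any graph the edges of a matching are pairwise vertex-disjoint, so a matching uses at most $\lfloor |V|/2 \rfloor$ vertices; hence $\beta'(\c) \le \lfloor |V(\c)|/2 \rfloor$. For the reverse inequality, a Hamiltonian cycle visits every vertex, and selecting alternate edges along it yields a matching of size $\lfloor |V(\c)|/2 \rfloor$. So everything reduces to computing $|V(\c)|$.

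Next I would pin down $|V(\c)|$. The vertices of $\c$ are the non-central elements of $B_n = ([n]\times[n])\cup\{0\}$, so I must show $Z(B_n) = \{0\}$. The element $0$ is the (two-sided) zero, hence central. Conversely, using the multiplication rule (as already exploited in the proof of Lemma~\ref{nbd}): an idempotent $(i,i)$ does not commute with $(i,l)$ for any $l \neq i$, since $(i,i)(i,l)=(i,l)$ while $(i,l)(i,i)=0$; and $(i,j)$ with $i\neq j$ does not commute with $(j,j)$, since $(i,j)(j,j)=(i,j)$ while $(j,j)(i,j)=0$. Therefore no element of $[n]\times[n]$ is central, $Z(B_n)=\{0\}$, and $|V(\c)| = n^2$.

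Putting these together: by Theorem~\ref{Hamiltonian}(iii) the graph $\c$ (for $n\ge 3$) contains a Hamiltonian cycle on its $n^2$ vertices, which contains a matching of size $\left\lfloor \frac{n^2}{2}\right\rfloor$; and the vertex-disjointness bound gives $\beta'(\c) \le \left\lfloor \frac{n^2}{2}\right\rfloor$. Hence $\beta'(\c) = \left\lfloor \frac{n^2}{2}\right\rfloor$. (Alternatively one could invoke Lemma~\ref{vertex-covering-matching number}(ii), but that would require separately determining the edge covering number, so the Hamiltonian route is shorter.) There is no genuine obstacle here: the only care needed is the bookkeeping that $|V(\c)| = n^2$ and the elementary observation that a cycle on $m$ vertices has matching number $\lfloor m/2 \rfloor$.
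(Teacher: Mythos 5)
Your proposal is correct and follows exactly the paper's route: the paper derives this corollary directly from Theorem \ref{Hamiltonian}(iii) (Hamiltonicity), with a Hamiltonian cycle on the $n^2$ vertices yielding a matching of size $\left\lfloor \frac{n^2}{2}\right\rfloor$ and the trivial bound giving equality. Your extra verification that $Z(B_n)=\{0\}$, so that $|V(\c)|=n^2$, is a detail the paper leaves implicit but is entirely consistent with its argument.
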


In view of Lemma \ref{vertex-covering-matching number}(ii), we have the following corollary.

\begin{corollary}
For $n \geq 3$, the edge  covering number  of $\c$ is  $n^2 - \left\lceil \frac{n^2}{2}\right\rceil$.
\end{corollary}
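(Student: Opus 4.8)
The plan is to derive this directly from the Gallai-type identity of Lemma~\ref{vertex-covering-matching number}(ii), using the matching number supplied by the preceding corollary. First I would record that $V(\c) = B_n \setminus Z(B_n)$ has exactly $n^2$ elements: the zero of $B_n$ is central, while for a nonzero element $(i,j)$ one may choose (as $n \ge 3$) an index $m$ distinct from $i$ and $j$, and then $(i,j)\cdot(j,m) = (i,m) \ne 0 = (j,m)\cdot(i,j)$, so $(i,j) \notin Z(B_n)$; hence $Z(B_n) = \{0\}$ and $|V(\c)| = n^2$.

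The hypothesis of Lemma~\ref{vertex-covering-matching number}(ii) requires $\c$ to have no isolated vertices, and this is immediate from Corollary~\ref{degree of all B_n}: the degree of an idempotent vertex is $(n-1)^2$ and that of a non-idempotent vertex is $n(n-2)$, both strictly positive for $n \ge 3$. Thus the identity $\alpha'(\c) + \beta'(\c) = |V(\c)|$ is available, with $\alpha'$ the edge covering number and $\beta'$ the matching number.

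It then only remains to substitute into this identity: by the previous corollary the matching number $\beta'(\c)$ equals $\left\lfloor \frac{n^2}{2}\right\rfloor$, and together with $|V(\c)| = n^2$ this pins down the edge covering number $\alpha'(\c)$, giving the asserted value after a one-line rewriting of the floor term. There is no genuine obstacle here: the substance of the argument is the bookkeeping already carried out for the order of $\c$, its lack of isolated vertices, and its matching number, after which Lemma~\ref{vertex-covering-matching number}(ii) does the rest.
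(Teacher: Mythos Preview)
Your approach is exactly the paper's: the corollary is stated there with the one-line justification ``In view of Lemma~\ref{vertex-covering-matching number}(ii)'', and you have simply filled in the bookkeeping (the order of $V(\c)$, the absence of isolated vertices, the previous matching-number corollary) that the paper leaves implicit.

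One small point: your final ``one-line rewriting of the floor term'' deserves a second look. Gallai gives edge covering number $= n^2 - \left\lfloor \tfrac{n^2}{2}\right\rfloor = \left\lceil \tfrac{n^2}{2}\right\rceil$, whereas the stated value is $n^2 - \left\lceil \tfrac{n^2}{2}\right\rceil$. These agree when $n$ is even but differ by $1$ when $n$ is odd, so the discrepancy is in the statement rather than in your method; you should not expect the rewriting to land on the printed expression as it stands.
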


In the following theorem, we investigate the independence number, dominance number, planarity and perfectness of $\c$.
\begin{theorem}\label{independence number}
For $n \ge 2$, we have
\begin{enumerate}
\item[\rm (i)] the independence number of $\Delta(B_n)$ is $3$.
\item[\rm (ii)] the dominance number of $\Delta(B_n)$ is $3$.
\item[\rm (iii)] $\Delta(B_n)$ is planar if and only if $n = 2$.
\item[\rm (iv)] $\Delta(B_n)$ is perfect if and only if $n = 2$.		
\end{enumerate}
\end{theorem}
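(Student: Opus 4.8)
The plan rests on one combinatorial fact that should be extracted first from Lemma~\ref{nbd} (it is essentially Remark~\ref{r.non adjacent vertices} read in both directions): since $Z(B_n)=\{0\}$, the vertex set of $\c$ is $[n]\times[n]$, and two distinct vertices $(a,b)$ and $(c,d)$ are adjacent in $\c$ precisely when $b\neq c$ and $d\neq a$; equivalently, they are non-adjacent exactly when $b=c$ or $d=a$. Every part of the theorem will be reduced to this rule applied to a handful of explicit vertices, with the case $n=2$ (where no third index is available) treated separately. It is worth recording at the outset that $\Delta(B_2)$ is just the single edge $(1,1)(2,2)$ together with the two isolated vertices $(1,2)$ and $(2,1)$.

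For part~(i), I would first obtain the lower bound by checking that $(1,1),(1,2),(2,1)$ are pairwise non-adjacent, so $\alpha(\c)\geq 3$. For the upper bound I would argue by contradiction: given four distinct pairwise non-adjacent vertices $(a_t,b_t)$, $t\in\{1,2,3,4\}$, form the digraph on $\{1,2,3,4\}$ with an arc $s\to t$ whenever $b_s=a_t$. The non-adjacency rule forces, for every pair of indices, an arc in at least one direction, so the digraph has at least $\binom{4}{2}$ arcs and hence a vertex of out-degree $\ge2$, say $1\to2$ and $1\to3$; then $a_2=a_3=b_1=:p$. From the pair $\{2,3\}$ one gets $b_2=p$ or $b_3=p$, so after possibly swapping the indices $2$ and $3$ the second vertex is $(p,p)$, the first is $(a_1,p)$ with $a_1\neq p$, and the third is $(p,b_3)$ with $b_3\neq p$ (using distinctness). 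Splitting on whether $a_4=p$ and invoking the pairs $\{3,4\},\{1,4\},\{2,4\}$ together with distinctness of the four vertices should produce a contradiction in each branch, giving $\alpha(\c)=3$. I expect this upper bound to be the main obstacle: unlike the other parts it is not closed off by a single witnessing configuration, and the bookkeeping in the case split must be carried out carefully.

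For part~(ii), the lower bound amounts to showing that no two vertices $u,v$ dominate $\c$, which I would do by exhibiting in each case a vertex distinct from and non-adjacent to both: take $(b,c)$ when $u=(a,b)$ and $v=(c,d)$ are both non-idempotent; take $(i,a)$ when $u=(i,i)$ and $v=(a,b)$ with $a\neq i$, and take $(j,i)$ for any $j\neq i$ when $a=i$; take $(i,j)$ when $u=(i,i)$ and $v=(j,j)$ with $i\neq j$. For the upper bound, when $n\geq3$ I would check that $\{(1,1),(2,2),(3,3)\}$ dominates: a vertex $(k,l)$ lying outside this set and non-adjacent to all three would require $i\in\{k,l\}$ for each $i\in\{1,2,3\}$, impossible since $|\{k,l\}|\le2$; for $n=2$ the set $\{(1,1),(1,2),(2,1)\}$ dominates. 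Hence the dominance number is exactly $3$.

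For parts~(iii) and~(iv), for $n=2$ I would simply note that $\Delta(B_2)$ is planar and (being bipartite, indeed having at most one edge) perfect. For $n\geq3$ I would work inside the subgraph of $\c$ induced on $\{(i,j):i,j\in[3]\}$, whose six non-idempotent vertices form the $6$-cycle
\[(1,2)\sim(1,3)\sim(2,3)\sim(2,1)\sim(3,1)\sim(3,2)\sim(1,2),\]
with $(1,1)$ joined to the antipodal pair $\{(2,3),(3,2)\}$, $(2,2)$ to $\{(1,3),(3,1)\}$, and $(3,3)$ to $\{(1,2),(2,1)\}$. Replacing each of the three antipodal chords of this $6$-cycle by the length-two path through $(1,1)$, $(2,2)$, $(3,3)$ respectively exhibits a subdivision of $K_{3,3}$ inside $\c$, with branch sets $\{(1,2),(2,3),(3,1)\}$ and $\{(1,3),(2,1),(3,2)\}$, so $\c$ is non-planar by Kuratowski's theorem; this settles~(iii). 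For~(iv) I would verify that the five vertices $(1,1),(2,3),(2,1),(3,1),(3,2)$ induce a $5$-cycle in $\c$; since $\chi(C_5)=3\neq2=\omega(C_5)$, the graph $\c$ has an induced subgraph witnessing imperfection, so it is not perfect, completing both equivalences.
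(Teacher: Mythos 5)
Your proposal is correct, and in every part it reaches the conclusion by a route that differs from the paper's, generally to good effect. For (i), the paper bounds an independent set $I$ containing a non-idempotent $(i,j)$ by covering the non-neighbours of $(i,j)$ with the three cliques $\{(i,i),(j,j)\}$, $\{(x,i):x\neq i\}$, $\{(j,y):y\neq j\}$ and taking one vertex from each; your digraph/pigeonhole argument replaces this with a symmetric counting step, and the two branches you leave open do close in one line each: if $a_4=p$ then vertices $3$ and $4$ both have first coordinate $p$, and non-adjacency of the pair $\{3,4\}$ together with $b_3\neq p$ forces $b_4=p$, making vertex $4$ coincide with vertex $2$; if $a_4\neq p$ then the pair $\{2,4\}$ forces $b_4=p$, and the pair $\{1,4\}$ then demands $a_4=p$ or $a_1=p$, both excluded. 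You should write those two lines out rather than leave them as ``should produce a contradiction.'' Incidentally, your argument sidesteps a small imprecision in the paper, which asserts $I\subseteq A\cup B\cup C$ although $(i,j)$ itself lies outside that union, so the paper's count as literally written only gives $|I|\le 4$ and needs the extra observation that one cannot pick a vertex from each of $B$ and $C$ alongside an idempotent. For (ii), the paper obtains the upper bound by citing the fact that the dominance number is at most the independence number, whereas you exhibit the dominating set $\{(1,1),(2,2),(3,3)\}$ directly; the lower bounds are the same style of case analysis, and all of your witnesses $(b,c)$, $(i,a)$, $(j,i)$, $(i,j)$ check out against the adjacency rule, including distinctness from the two candidate dominators. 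For (iii), the paper contracts an edge of a subgraph to produce a $K_{3,3}$ minor (and then, slightly loosely, invokes Kuratowski), while you build an honest $K_{3,3}$ subdivision from the hexagon on the six non-idempotents of $[3]\times[3]$ with the three idempotents subdividing the long diagonals; this is self-contained and genuinely a Kuratowski argument. For (iv), the paper uses an $8$-vertex induced subgraph with clique number $2$ and chromatic number $3$, whereas your five vertices $(1,1),(2,3),(2,1),(3,1),(3,2)$ do induce a chordless $5$-cycle (all ten pairs check), giving the minimal odd-hole witness of imperfection, which is cleaner.
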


\begin{proof}
\begin{enumerate}
\item[(i)] For $n \geq 2$, first note that the set $I' = \{(1, 1), (1, 2), (2, 1)\}$ is an independent set of $\c$. In fact, $I$ is of maximum size for $n = 2$. Thus, the result hold for $n = 2$. Now, to prove the result we show that any independent set in $\c$, where $n \geq 3$, is of size at most $3$. Let $I$ be an independent subset of $\c$. If $I$ does not contain any non-idempotent vertex of $\c$, then clearly $I = \{(i, i)\}$, for some $i \in [n]$. Thus, $|I|\le 3$. We may now suppose
$(i, j)$, where $i \ne j$, belongs to $I$. Note that each of the set $A = \{(i, i), (j, j)\}$, $B = \{(x, i) \; : \; x \ne i\}$ and $C = \{(j, y) \; : \; y \ne j \}$ of vertices forms a complete subgraph of $\c$. For $(k, l) \notin A \cup B \cup C$, we have $k \ne j$ and $l \ne i$. In this case, $(k, l) \sim (i, j)$. Thus, the independent set $I$ must contained in $A \cup B \cup C$. Being an independent set $I$ can contain at most one element from each of these sets. Consequently, $|I| \le 3$. 
		
\smallskip		
\noindent
(ii) By part (i), we have $\alpha(\c) = 3$. Further by \cite[Lemma 3.1.33]{b.West}, the dominance number of $\c$ is at most $3$. Now we prove the result by showing that  any dominating subset of $\c$ contains at least three elements. Let $D$ be a dominating subset of $\c$. In view of Corollary \ref{degree of all B_n}, we do not have a vertex of $\c$ whose degree is $n^2 - 1$ so that $|D| \ne 1$. Suppose $D = \{(a, b), (c, d)\}$. If $a,b, c, d$ all are distinct, then it can be verified that the vertex $(b, c)$ is not adjacent to any element of $D$; a contradiction for $D$ to be a dominating set. If $D$ contains an idempotent, say $(a, b) = (i, i)$, then clearly both $c, d$ can not be equal to $i$. Without loss of generality, let $c \ne i$. Then it is easy to observe that $(i, c)$ is not adjacent to all the elements of $D$; a contradiction. Thus, in this case ($|D| = 2$), the dominating set $D$ can not contain any idempotent vertex and it can not be of the form $\{(a, b), (c,d)\}$, where all $a, b, c, d$ are distinct. Now we have the following remaining cases:
		
\noindent\textbf{Case 1}: $c \in \{a, b\}$. Then the vertex $(d, a)$ is not adjacent to any element of $D$; a contradiction.
		
\noindent\textbf{Case 2}: $d \in \{a, b\}$. Then the vertex $(b, c)$ is not adjacent to any element of $D$; again a contradiction.
		
		
Thus, a dominating set of two elements in $\c$ is not possible. Consequently, $|D| \ge 3$. 
		
\smallskip
\noindent
(iii) For $n = 2$, it is easy to observe that  $\c$ is planar. For the converse part, let $n \geq 3$. It is sufficient to show that some induced subgraph of $\c$ is not planar. From the subgraph of $\c$ in Figure \ref{sub}, if we apply edge contraction on the vertices $(1, 2)$ and $(2, 1)$, then we get a complete bipartite graph $K_{3, 3}$. Hence, by Kuratowski's theorem, $\c$ is not planar.
\begin{figure}[h!]
\centering
\includegraphics[width=0.5 \textwidth]{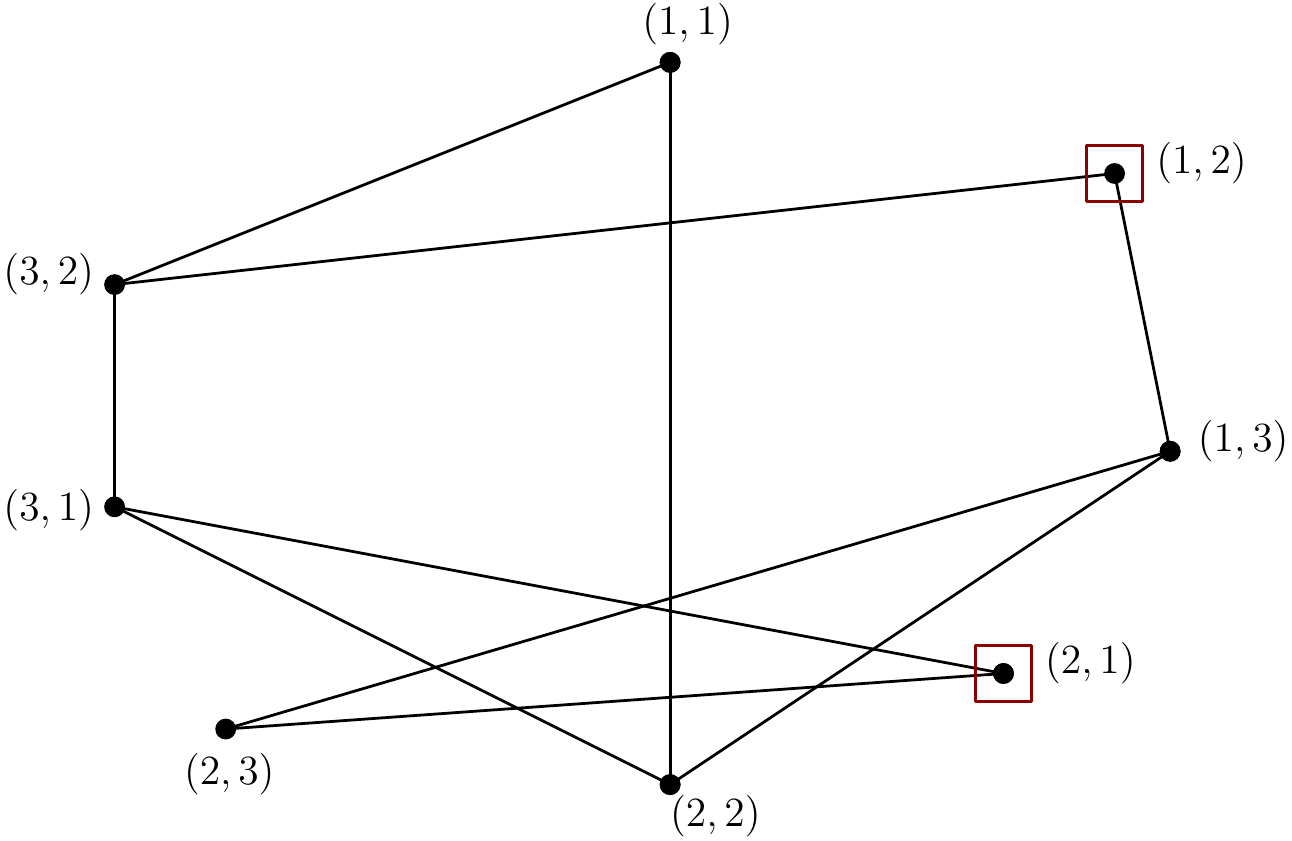}
\caption{The subgraph of  $\Delta(B_n)$}\label{sub}
\end{figure}
		
\smallskip
\noindent
(iv) For $n = 2$, it is easy to verify that  $\c$ is perfect. On the other hand, let $n \geq 3$. In order to prove  that $\c$ is not perfect, we prove that the chromatic number  and clique number of an induced subgraph of $\c$  are not equal. In fact, we show that for the subgraph induced by $U = V(\Delta(B_3)) \setminus \{(3, 3)\}$,  $\omega(\Delta(U)) = 2$ whereas $\chi(\Delta(U)) = 3$. By part (i), since the independence number of $\c$ is $3$, we must have at least three subsets in any chromatic partition of $\Delta(U)$. Thus, $\chi(\Delta(U)) \geq 3$. Further, note that the sets $A_1 = \{(1, 1), (1, 2), (2, 1)\}, \; A_2 = \{(2, 2), (2, 3), (3, 2)\}$ and  $A_3 = \{(1, 3), (3, 1)\}$ forms a chromatic partition of the vertex set of $\Delta(U)$. Hence, $\chi(\Delta(U)) = 3$.
\end{enumerate}
\end{proof}

In view of Lemma \ref{vertex-covering-matching number}, we have the following consequences of Theorem \ref{independence number}(i).
\begin{corollary} 
For $n \geq 2$, the vertex covering number of  $\c$ is  $n^2 - 3$.
\end{corollary}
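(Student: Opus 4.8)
The statement is an immediate consequence of Theorem~\ref{independence number}(i) together with the Gallai-type identity recorded in Lemma~\ref{vertex-covering-matching number}(i), so the only real content is to pin down $|V(\Delta(B_n))|$. The plan is as follows. First I would compute the centre $Z(B_n)$. The zero $0$ is central by definition of $B_n$. For any $(i,j)$ one checks that multiplying on the left versus on the right by a suitably chosen element of $B_n$ gives different answers: e.g.\ for an idempotent $(i,i)$, comparing $(i,i)\cdot(i,k)=(i,k)$ with $(i,k)\cdot(i,i)=0$ for $k\neq i$ shows $(i,i)\notin Z(B_n)$, and a similar one-line comparison handles $(i,j)$ with $i\neq j$. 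Hence $Z(B_n)=\{0\}$ for $n\ge 2$, and therefore $|V(\Delta(B_n))| = |B_n| - 1 = (n^2+1) - 1 = n^2$.

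Next I would simply combine the pieces: by Theorem~\ref{independence number}(i) the independence number of $\Delta(B_n)$ equals $3$, and by Lemma~\ref{vertex-covering-matching number}(i) the independence number and the vertex covering number of $\Delta(B_n)$ sum to $|V(\Delta(B_n))| = n^2$. Consequently the vertex covering number of $\Delta(B_n)$ is $n^2 - 3$, as claimed. There is no genuine obstacle here; the only point requiring a moment's care is the (routine) verification that $\Delta(B_n)$ has exactly $n^2$ vertices, i.e.\ that $0$ is the unique central element of $B_n$, everything else being quoted from the results established above.
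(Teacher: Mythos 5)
Your proposal is correct and follows exactly the route the paper intends: the corollary is stated as an immediate consequence of Theorem~\ref{independence number}(i) and Lemma~\ref{vertex-covering-matching number}(i), with $|V(\Delta(B_n))|=n^2$ since $Z(B_n)=\{0\}$. Your explicit verification that $0$ is the only central element is a routine but welcome addition that the paper leaves implicit.
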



\textbf{Notation:} We denote $\mathcal K$ as the set of all cliques of $\c$ having no idempotent element and $\mathcal E$ as the set of non-zero idempotents of $B_n$.  

In order to obtain the clique number of $\c$, the following lemma is useful.

\begin{lemma}\label{clique inequality}
	For $K  \in \mathcal K$, we have
	\[ |K| \leq 
	\left\{\begin{array}{cl}
	\frac{n^2}{4} & \text {if $n$ is even;}  \\
	\vspace{-.02cm}\\
	\frac{n^{2} -1}{4} & \text {if $n$ is odd.}
	\end{array}\right. \]
\end{lemma}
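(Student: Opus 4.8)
The plan is to analyze the structure of a clique $K \in \mathcal{K}$ by looking at which ``rows'' and ``columns'' appear among its elements. Write each element of $K$ as $(i,j)$ with $i \neq j$ (since $K$ contains no idempotent). For a vertex $(i,j) \in K$, call $i$ its \emph{source} and $j$ its \emph{target}. The first observation, which I would extract from Remark \ref{r.non adjacent vertices}, is a strong separation property: if $(i,j) \in K$ and $(k,l) \in K$ are distinct, then they are adjacent, so we cannot have $l = i$ nor $j = k$. In other words, no target of an element of $K$ equals the source of another element of $K$, and (taking the two elements in the other order) no source equals the target of another. Let $R$ be the set of sources appearing in $K$ and $C$ the set of targets appearing in $K$. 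The separation property forces $R \cap C = \varnothing$: if some $m \in R \cap C$, then $m$ is the source of some $(m,j) \in K$ and the target of some $(i,m) \in K$, and these two elements (necessarily distinct, since one has source $m$ and the other has target $m$, and $R\cap C$ being nonempty with $m$ in it already rules out $(m,m)$) violate Remark \ref{r.non adjacent vertices}.

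Next I would bound $|K|$ using $|R|$ and $|C|$. Since every element of $K$ has its source in $R$ and its target in $C$, we have $K \subseteq R \times C$, hence $|K| \le |R|\cdot|C|$. Because $R$ and $C$ are disjoint subsets of $[n]$, we have $|R| + |C| \le n$, so by AM--GM $|R|\cdot|C| \le \lfloor n/2 \rfloor \cdot \lceil n/2 \rceil$, which equals $n^2/4$ when $n$ is even and $(n^2-1)/4$ when $n$ is odd. That already gives the stated bound, provided the inclusion $K \subseteq R\times C$ together with $|R|+|C|\le n$ is airtight; the main point to verify carefully is precisely the disjointness $R \cap C = \varnothing$, which is where the commuting condition (via Remark \ref{r.non adjacent vertices}) does all the work, and is the step I expect to require the most care in the write-up.

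I should double-check the edge case $|R| = 0$ or $|C| = 0$, which cannot happen for nonempty $K$ since each element contributes both a source and a target; and the case where $R$ or $C$ is a singleton, where the bound $|K| \le |R|\,|C|$ is still valid and still dominated by the extremal split. One subtlety: I want $K \subseteq R\times C$ to be genuine, i.e. I am not claiming every pair in $R\times C$ lies in $K$, only the reverse inclusion, which is immediate from the definitions of $R$ and $C$. So the full argument is: from $K\in\mathcal K$ and Remark \ref{r.non adjacent vertices} deduce $R\cap C=\varnothing$; then $|K|\le|R||C|\le\lfloor n/2\rfloor\lceil n/2\rceil$; finally rewrite $\lfloor n/2\rfloor\lceil n/2\rceil$ as $n^2/4$ or $(n^2-1)/4$ according to the parity of $n$. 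I would present it in exactly that order, with the disjointness claim isolated as the key lemma-within-the-proof.
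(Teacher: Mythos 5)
Your proof is correct and follows essentially the same route as the paper: both arguments introduce the set of first coordinates and the set of second coordinates occurring in $K$, prove they are disjoint subsets of $[n]$ (you via Remark \ref{r.non adjacent vertices}, the paper directly from the failure of $(t,p)$ and $(q,t)$ to commute unless $p=q=t$), and then bound $|K|$ by the product of their sizes, which is at most $\lfloor n/2\rfloor\lceil n/2\rceil$. The only difference is cosmetic: the paper takes $K$ of maximum size and concludes $K=A\times B$ exactly (a fact it reuses later for Corollary \ref{clique-non-idempotent}), whereas for the inequality alone your inclusion $K\subseteq R\times C$ together with $|R|+|C|\le n$ suffices.
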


\begin{proof}
	Suppose $K$ is of maximum size. Consider $A = \{a \in [n]  :  (a, y) \in K \; \text{for some} \; y \in [n] \}$ and $B = \{b \in [n]  :  (x, b) \in K \; \text{for some} \; x \in [n] \}$.
	If $t \in A \cap B$, then there exist $p, q \in [n]$ such that $(t, p), (q, t) \in K$. Since $K$ is a clique, we get $(t, p) \sim (q,t)$  so that $(t, p)(q, t) = (q, t)(t, p)$. Consequently, $p = q = t$ gives $(t,t) \in K$; a contradiction. Thus, $A$ and $B$ are disjoint subsets of $[n]$ and so $A \times B$ does not contain an idempotent.  
	If $(a,b), (c,d) \in A \times B$, then $a \neq d$ and $b \neq c$. As a consequence, $(a, b) \sim (c, d)$. Thus, $A\times B$ is  a clique such that $K \subseteq A \times B$. Since $K$ is a clique of maximum size which does not contain an idempotent, we get $K = A \times B$. If $|A| = k$, then $|B| = n-k$ because $A \times B$ is a clique of maximum size. Further, $|K| = |A||B| = k(n-k)$. If $n$ is even, note that $|K| = \frac{n^2}{4}$ which attains at $k = \frac{n}{2}$. Otherwise, $|K| = \frac{n^2 -1}{4}$ which attains at either $k = \frac{n-1}{2}$ or $k = \frac{n+1}{2}$.
\end{proof}

In view of the proof of Lemma \ref{clique inequality}, we have the following corollary.

\begin{corollary}\label{clique-non-idempotent}
For $n \geq 4$, there exists $K \in \mathcal K$  such that \[ |K| =  
\left\{\begin{array}{cl}
\frac{n^2}{4} & \text {if $n$ is even;}  \\
\vspace{-.02cm}\\
\frac{n^{2} -1}{4} & \text {if $n$ is odd.}
\end{array}\right. \]
\end{corollary}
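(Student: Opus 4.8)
The plan is to make explicit the extremal configuration that already appeared implicitly in the proof of Lemma~\ref{clique inequality}. Partition $[n]$ into two disjoint blocks $A$ and $B$ whose sizes are as equal as possible: if $n$ is even, take $A = \{1, 2, \ldots, n/2\}$ and $B = \{n/2 + 1, \ldots, n\}$; if $n$ is odd, take $A = \{1, 2, \ldots, \tfrac{n-1}{2}\}$ and $B = \{\tfrac{n-1}{2} + 1, \ldots, n\}$, so that $|A| = \lfloor n/2 \rfloor$ and $|B| = \lceil n/2 \rceil$. Set $K = \{(a, b) : a \in A,\ b \in B\} = A \times B$, regarded as a set of elements of $B_n$.

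First I would check that $K$ is a legitimate set of vertices of $\c$ with no idempotent. Since $Z(B_n) = \{0\}$, every element of $[n] \times [n]$ is a vertex of $\c$, so $K \subseteq V(\c)$; and because $A \cap B = \varnothing$, no pair $(a, b) \in K$ satisfies $a = b$, hence $K \cap \mathcal E = \varnothing$. Next I would verify that $K$ is a clique. Let $(a, b)$ and $(c, d)$ be distinct elements of $K$; then $a, c \in A$ and $b, d \in B$, and the disjointness of $A$ and $B$ forces $a \ne d$ and $b \ne c$, so $(a, b) \cdot (c, d) = 0 = (c, d) \cdot (a, b)$, i.e. the two vertices are adjacent in $\c$. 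Thus $K \in \mathcal K$. Finally, $|K| = |A|\,|B| = \lfloor n/2 \rfloor \lceil n/2 \rceil$, which equals $\frac{n^2}{4}$ when $n$ is even and $\frac{n^2 - 1}{4}$ when $n$ is odd; in the odd case this is precisely the maximum of $k(n - k)$, attained at $k = \tfrac{n-1}{2}$. Comparing with Lemma~\ref{clique inequality} shows $K$ realizes the stated bound.

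There is essentially no obstacle here: the corollary merely records that the combinatorial estimate of Lemma~\ref{clique inequality} is sharp, and the content is confined to pinning down one explicit product set $A \times B$ and re-running the adjacency computation from that lemma's proof in the forward direction. The only mild care needed is to confirm that the exhibited elements are genuine (non-central) vertices of $\c$, which is immediate from $Z(B_n) = \{0\}$; the hypothesis $n \ge 4$ is not essential to the construction itself (both blocks are nonempty already for $n \ge 2$) and simply demarcates the range in which such a clique is of primary interest.
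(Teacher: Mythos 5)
Your proof is correct and follows essentially the same route as the paper, which derives the corollary directly from the $A\times B$ construction in the proof of Lemma \ref{clique inequality}; you have simply made the extremal partition and the adjacency check explicit. No issues.
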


\begin{lemma}\label{clique-4}
For $n \in \{2, 3, 4\}$, the set  $\mathcal E$ forms a clique of maximum size. Moreover, in this case $\omega(\c) = n$.
\end{lemma}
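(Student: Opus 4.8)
The plan is to prove the two assertions separately: first that $\mathcal E$ is a clique in $\c$ for every $n$, and then that for $n \in \{2,3,4\}$ no larger clique exists, which simultaneously establishes $\omega(\c) = n$ since $|\mathcal E| = n$.

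First I would observe that $\mathcal E = \{(i,i) : i \in [n]\}$ and that any two distinct idempotents $(i,i), (j,j)$ satisfy $(i,i)(j,j) = 0 = (j,j)(i,i)$, so they commute and hence are adjacent in $\c$; thus $\mathcal E$ is always a clique of size $n$, giving $\omega(\c) \ge n$. It remains to show $\omega(\c) \le n$ for $n \in \{2,3,4\}$. Here I would split a maximum clique $K$ according to whether it contains an idempotent. If $K$ contains no idempotent, then $K \in \mathcal K$, and Lemma~\ref{clique inequality} bounds $|K|$ by $\frac{n^2}{4}$ (for $n$ even) or $\frac{n^2-1}{4}$ (for $n$ odd); a direct check gives $1 \le n$, $2 \le 3$, and $4 \le 4$ respectively for $n = 2, 3, 4$, so $|K| \le n$ in every case.

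The remaining case is a maximum clique $K$ containing at least one idempotent, say $(i,i) \in K$. By Lemma~\ref{nbd}(i), every other vertex of $K$ lies in $N[(i,i)] = \{(j,k) : j, k \ne i\} \cup \{(i,i)\}$, so $K \setminus \{(i,i)\}$ is a clique inside the induced subgraph on $\{(j,k) : j, k \in [n] \setminus \{i\}\}$, which is a copy of (the non-zero part of) $B_{n-1}$ sitting inside $B_n$. If $K \setminus \{(i,i)\}$ itself contains another idempotent $(j,j)$, I would iterate; if it contains no idempotent, then $K \setminus \{(i,i)\} \in \mathcal K$ and actually sits in the sub-Brandt-semigroup on index set $[n]\setminus\{i\}$, so Lemma~\ref{clique inequality} applied with $n-1$ in place of $n$ bounds its size. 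For $n \le 4$ this recursion has only a few layers, so I would simply enumerate: for $n = 2$, $K \subseteq \{(1,1),(2,2)\}$ and $|K| \le 2$; for $n = 3$, $K$ contains at most the two idempotents available after fixing one, plus at most one non-idempotent from the size-$1$ clique in $B_1$, giving $|K| \le 3$; for $n = 4$, peeling off one idempotent leaves a clique in $B_3$ of size at most $3$, so $|K| \le 4$. In each case $|K| \le n = |\mathcal E|$, so $\mathcal E$ is a clique of maximum size and $\omega(\c) = n$.

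The main obstacle I anticipate is the bookkeeping in the mixed case: one must argue cleanly that a clique containing an idempotent $(i,i)$ restricts to the Brandt subsemigroup on $[n]\setminus\{i\}$ and that the non-idempotent part of what remains is governed by Lemma~\ref{clique inequality} with a smaller parameter, rather than hand-waving past it. A cleaner alternative, which I would prefer if it goes through, is to show directly that for any clique $K$ the idempotent vertices of $K$ together with the "Brandt index sets" $A, B$ of its non-idempotent part satisfy a counting bound: if $K$ has $m$ idempotents $(i_1,i_1), \ldots, (i_m, i_m)$ then, since every non-idempotent $(a,b) \in K$ must be adjacent to each $(i_t,i_t)$, we need $a, b \notin \{i_1, \ldots, i_m\}$, so the non-idempotent part is a clique in $\mathcal K$ on the remaining $n - m$ indices and hence has size at most $\lfloor (n-m)^2/4 \rfloor$; thus $|K| \le m + \lfloor (n-m)^2/4 \rfloor$, and maximizing $f(m) = m + \lfloor (n-m)^2/4 \rfloor$ over $0 \le m \le n$ for $n \in \{2,3,4\}$ yields the maximum at $m = n$ with value $n$. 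This single inequality disposes of all subcases uniformly and makes $\omega(\c) = n$ immediate.
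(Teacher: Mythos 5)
Your proof is correct. The case split (maximum clique with no idempotent versus one containing idempotents) is the same skeleton the paper uses, and the idempotent-free case via Lemma~\ref{clique inequality} is identical. Where you diverge is in the mixed case: the paper handles $n=2,3$ by inspecting Figure~1 and then, for $n=4$, peels off a single idempotent and invokes the fact that $\{(1,1),(2,2),(3,3)\}$ is the \emph{unique} maximum clique of $\Delta(B_3)$; your preferred argument instead proves the uniform bound $|K|\le m+\lfloor (n-m)^2/4\rfloor$ for a clique with $m$ idempotents and maximizes over $m$. That is precisely the argument the paper deploys later, in the proof of Theorem~\ref{clique equality} for $n>4$; applying it already at $n\le 4$ buys you a figure-free, uniform treatment of all three values of $n$ and avoids having to establish uniqueness of the maximum clique in $\Delta(B_3)$. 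Two small points to tighten: (1) the bound $\lfloor (n-m)^2/4\rfloor$ on the non-idempotent part is not literally Lemma~\ref{clique inequality} as stated (which fixes the index set $[n]$); you need to note that the non-idempotent part lives on the $n-m$ indices avoided by the idempotents and rerun the $A\times B$ argument there, exactly as the paper does in Theorem~\ref{clique equality} --- your phrase ``on the remaining $n-m$ indices'' gestures at this but should be made explicit; (2) in your $n=3$ enumeration, $\Delta(B_1)$ has no vertices at all (every element of $B_1$ is central), so ``at most one non-idempotent from the size-$1$ clique in $B_1$'' overcounts; the correct count after fixing two idempotents is zero non-idempotents, though your final bound $|K|\le 3$ is unaffected. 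Neither issue is a real gap, and your closing $f(m)$ computation is airtight.
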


\begin{proof}
By Figure \ref{B_3}, note that  $\{(1, 1), (2, 2)\}$ and $\{(1, 1), (2, 2), (3, 3)\}$ forms a clique of maximum size for $n = 2$ and $ 3$, respectively. Now, for $n = 4$, clearly $ K = \{(1, 1), (2, 2), (3, 3), (4, 4)\}$ is a clique in $\Delta(B_4)$. Suppose  $K'$ is a clique of maximum size. If  $K'$ does not contain an idempotent, then by Lemma \ref{clique inequality}, $|K'| = 4$. Thus, $K$ is also a clique of maximum size. On the other hand, we may now assume that $K$ contains an idempotent. Without loss of generality, let $(4, 4) \in K'$. Then $K' \setminus \{(4, 4)\}$ is a clique of maximum size in $\Delta(B_3)$. Since $\{(1, 1), (2, 2), (3, 3)\}$ is the only clique in $\Delta(B_3)$ of maximum size. Thus, $K' \setminus \{(4, 4)\} = \{(1, 1), (2, 2), (3, 3)\}$. Consequently, $K' =  \{(1, 1), (2, 2), (3, 3), (4, 4)\} = K$. Hence, we have the result.
\end{proof}  

From the proof of Lemma \ref{clique inequality} and Lemma \ref{clique-4}, we have the following remark.
\begin{remark}\label{r.clique}
For $n= 4$, let $K$ be any clique in $\c$ of size $4$. Then $K$ is either $\mathcal E$ or $K = A \times B$, where $A$ and $B$ are disjoint subset of $\{1, 2, 3, 4\}$ of size two. 
\end{remark}

\begin{theorem}\label{clique equality}
For $n > 4$, the clique number of $\c$ is given below:
	\[ \omega(\Delta(B_n)) =
	\left\{\begin{array}{cl}
	\frac{n^2}{4} & \text {if $n$ is even;}  \\
	\vspace{.1cm}\\
	\frac{n^{2} -1}{4} & \text {if $n$ is odd}.
	\end{array}\right. \]
\end{theorem}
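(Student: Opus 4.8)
The plan is to combine the upper bound from Lemma \ref{clique inequality} with a matching lower bound, and then argue that for $n > 4$ the idempotent clique $\mathcal E$ cannot do better than the idempotent-free cliques. First I would observe that any clique $K$ of $\c$ splits into its idempotent part $K \cap \mathcal E$ and its non-idempotent part. The key structural point is that an idempotent $(i,i)$ is adjacent only to vertices $(k,l)$ with $k \ne i$ and $l \ne i$ (Lemma \ref{nbd}(i)), so if $K$ contains idempotents $(i_1,i_1), \dots, (i_s,i_s)$, then every other vertex of $K$ must avoid all the indices $i_1, \dots, i_s$ in both coordinates; in particular the non-idempotent part of $K$ lives in $([n]\setminus\{i_1,\dots,i_s\})^2$, and by the argument in Lemma \ref{clique inequality} (disjointness of the row-index set and column-index set) that part has size at most $\lfloor (n-s)^2/4 \rfloor$. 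Hence $|K| \le s + \lfloor (n-s)^2/4 \rfloor$.

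Next I would maximize $f(s) = s + (n-s)^2/4$ over $s \in \{0, 1, \dots, n\}$. This is a convex function of $s$, so its maximum on the interval is attained at an endpoint: $f(0) = n^2/4$ versus $f(n) = n$. For $n > 4$ we have $n^2/4 > n$, so the maximum is $n^2/4$, attained at $s = 0$ — i.e., the largest clique has no idempotent. (One should be slightly careful with the floor function and the odd case, but since $\lfloor (n-s)^2/4\rfloor \le (n-s)^2/4$ the bound $f(s) \le \max\{n^2/4, n\}$ still holds, and for $n$ odd the integer bound is $(n^2-1)/4$.) Combining this with Lemma \ref{clique inequality}, every clique of $\c$ has size at most $\frac{n^2}{4}$ if $n$ is even and at most $\frac{n^2-1}{4}$ if $n$ is odd.

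For the lower bound, Corollary \ref{clique-non-idempotent} already exhibits, for $n \ge 4$, a clique $K \in \mathcal K$ of exactly that size (take $A, B$ a partition of $[n]$ into two parts of sizes $\lfloor n/2 \rfloor$ and $\lceil n/2 \rceil$, which is idempotent-free and complete by the computation in Lemma \ref{clique inequality}). Therefore $\omega(\c)$ equals this value, proving the theorem. The main obstacle — though it is not a deep one — is handling the idempotent-containing cliques cleanly: one must verify that mixing idempotents into a clique genuinely forces the index set of the remaining vertices to shrink, and then check that the resulting bound $s + \lfloor (n-s)^2/4\rfloor$ never beats $\lfloor n^2/4 \rfloor$ for $n > 4$. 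The threshold $n = 4$ is exactly where $f(0) = f(n)$, which is why Lemma \ref{clique-4} treats $n \le 4$ separately; I would make sure the strict inequality $n^2/4 > n$ for $n \ge 5$ is stated explicitly so the reader sees why the hypothesis $n > 4$ appears.
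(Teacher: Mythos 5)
Your proposal is correct and follows essentially the same route as the paper: bound a clique with $s$ idempotents by $s+\lfloor (n-s)^2/4\rfloor$ using the row/column disjointness argument of Lemma \ref{clique inequality} on the surviving indices, then check this never beats $\lfloor n^2/4\rfloor$ for $s>0$ and $n>4$, and quote Corollary \ref{clique-non-idempotent} for the matching lower bound. The only difference is cosmetic: you make the optimization explicit via convexity of $f(s)=s+(n-s)^2/4$ and the endpoint comparison $f(0)=n^2/4>n=f(n)$, where the paper merely asserts "one can observe" the strict inequality.
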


\begin{proof}
In view of Lemma \ref{clique inequality}, it is sufficient to prove that any clique of maximum size in $\c$ contains only non-idempotent vertices. Suppose $K$ is a clique of maximum size such that $K$ contains $m$ idempotents viz. $(i_1, i_1),(i_2, i_2), \ldots, (i_m, i_m)$. Without loss of generality, we assume that $\{i_1, i_2, \ldots, i_m \} = \{n - m +1, n -m + 2, \ldots, n \}$. For $1 \le r \le m$, $K$ contains $(i_r, i_r)$ and no element of the form $(x, i_r)$ or $(i_r, x)$ $(x \in [n]), x \ne i_r$ is in $K$. Thus $K \setminus \{(i_1, i_1), \ldots,(i_m, i_m) \}$ is a clique in $\Delta(B_{n-m})$ which does not contain any idempotent. Clearly, $|K \setminus \{(i_1, i_1), \ldots,(i_m, i_m) \}| = \omega(\Delta(B_{n-m}))$. Then by Corollary \ref{clique-non-idempotent}
	\[|K \setminus \{(i_1, i_1), \ldots,(i_m, i_m) \}| = \left\{\begin{array}{cl}
	\frac{(n - m)^2}{4} & \text {if $n - m$ is even;}  \\
	\\
	\frac{(n - m )^{2} -1}{4} & \text {if $n - m$ is odd.}
	\end{array}\right.\]
	Thus, \[|K| =  \left\{\begin{array}{cl}
	\frac{(n - m)^2}{4} + m & \text {if $n - m$ is even;}  \\
	\frac{(n - m )^{2} -1}{4} + m & \text {if $n - m$ is odd.}
	\end{array}\right.\]
	
	Since $n >4$ and for $m > 0$, one can observe that
	\[ |K| < \left\{\begin{array}{cl}
	\frac{n^2}{4} & \text {if $n$ is even;}  \\
	\frac{n^{2} -1}{4} & \text {if $n$ is odd;}
	\end{array}\right.\]
	a contradiction of the fact that $K$ is a clique of maximum size (see proof of lemma \ref{clique inequality}).  Thus $K$ has no idempotent.
\end{proof}

By the proof of Lemma \ref{clique inequality} and Theorem \ref{clique equality}, we have the following remarks.

\begin{remark}\label{non-idempotent_clique}
For $n > 4$, let $K$ be a clique  of maximum size in $\c$. Then all elements of $K$ are non-idempotent.
\end{remark}

\begin{remark}\label{non-idempotent_clique-II}
For $n > 4$ and $(i, j) \notin \mathcal E$,  there exists a clique $ K$ of maximum size  such that $(i, j) \in  K$.
\end{remark}

In view of Lemma \ref{nbd}, note that for each vertex $\widehat v$ of $\widehat{\Delta}(B_n)$ we have $\widehat v = \{v\}$. Thus, $\omega(\widehat{\Delta}(B_n)) = \omega(\c)$. Hence by Theorems \ref{strong-metric-dim} and  \ref{clique equality}, we have the following result.

\begin{theorem}
	For $n \geq 2$, we have \[{\rm sdim}(\c) = \left\{\begin{array}{cl}

	\frac{3n^2}{4} & \text {if $n$ is even;}\\
	\\
	\frac{3n^{2} + 1}{4} & \text {if $n$ is odd.}
	\end{array}\right. \]
\end{theorem}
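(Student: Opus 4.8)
The strategy is to reduce everything to Theorem~\ref{strong-metric-dim}, which computes the strong metric dimension of a diameter-$2$ graph as $|V(\mathcal G)|-\omega(\widehat{\mathcal G})$. So the first step is to check its hypothesis for $\mathcal G=\c$: by Theorem~\ref{Hamiltonian}(iv), $\c$ is connected and ${\rm diam}(\c)=2$ for every $n\ge 3$, which is exactly what is needed. The remaining value $n=2$ is too small for Theorem~\ref{Hamiltonian}(iv) to apply, and I would dispose of it by a direct computation, checking separately that ${\rm sdim}(\Delta(B_2))=3=\tfrac{3\cdot 2^2}{4}$.

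The second step is to identify the auxiliary graph $\widehat{\Delta}(B_n)$. The key point is that, for $n\ge 3$, the equivalence relation $\equiv$ on $V(\c)$ is trivial, i.e.\ distinct vertices have distinct closed neighbourhoods. This is read off from Lemma~\ref{nbd} together with Corollary~\ref{degree of all B_n}: an idempotent vertex has degree $(n-1)^2$ while a non-idempotent vertex has degree $n(n-2)$, and $(n-1)^2\ne n(n-2)$, so an idempotent and a non-idempotent are never $\equiv$-equivalent; and a direct comparison of the explicit descriptions in Lemma~\ref{nbd}(i)--(ii) shows that two distinct idempotents, respectively two distinct non-idempotents, always have distinct closed neighbourhoods. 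Hence $\widehat v=\{v\}$ for every vertex $v$, so $\widehat{\Delta}(B_n)\cong\c$ and in particular $\omega(\widehat{\Delta}(B_n))=\omega(\c)$.

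With these two observations in place the rest is mechanical. Since $0$ is the only central element of $B_n$, we have $|V(\c)|=|B_n|-|Z(B_n)|=(n^2+1)-1=n^2$. Theorem~\ref{strong-metric-dim} then gives ${\rm sdim}(\c)=n^2-\omega(\c)$, and substituting the value of $\omega(\c)$ from Theorem~\ref{clique equality} (valid for $n>4$) finishes the computation: if $n$ is even then ${\rm sdim}(\c)=n^2-\tfrac{n^2}{4}=\tfrac{3n^2}{4}$, and if $n$ is odd then ${\rm sdim}(\c)=n^2-\tfrac{n^2-1}{4}=\tfrac{3n^2+1}{4}$.

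Essentially every ingredient here is imported from earlier in the paper, so there is no single hard step; the only point requiring genuine (if routine) verification is the triviality of $\equiv$, since it is precisely this that allows $\omega(\widehat{\Delta}(B_n))$ to be replaced by $\omega(\c)$ in Theorem~\ref{strong-metric-dim}. I would also expect to spend a little extra care on the small values $n\le 4$, where Theorem~\ref{clique equality} does not apply: there the clique number must be taken from Lemma~\ref{clique-4} (with $n=4$ agreeing with $\tfrac{n^2}{4}$), and $n=2$ needs the direct check mentioned above.
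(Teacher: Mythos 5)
Your proposal is essentially the paper's own proof: observe from Lemma~\ref{nbd} that every $\equiv$-class is a singleton, so $\omega(\widehat{\Delta}(B_n))=\omega(\c)$, and then combine Theorem~\ref{strong-metric-dim} (with $|V(\c)|=n^2$) with the clique number from Theorem~\ref{clique equality}. One caution about the small cases you rightly single out, which the paper's one-line proof silently skips: for $n=3$ Lemma~\ref{clique-4} gives $\omega(\Delta(B_3))=3$, so the method yields $9-3=6$ rather than the stated $\frac{3\cdot 3^2+1}{4}=7$; and for $n=2$ the graph $\Delta(B_2)$ is disconnected (the vertices $(1,2)$ and $(2,1)$ are isolated), so the diameter-$2$ hypothesis of Theorem~\ref{strong-metric-dim} fails and your claimed direct value $3$ does not follow. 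These are defects of the theorem as stated rather than of your argument, which is sound for $n\ge 4$.
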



Now we obtain the chromatic number of $\c$. For $n \in \mathbb N$, we write $n = 3a +r$ where $0 \le r \le 2$ and $a \in \mathbb N$. Consider 

\[\mathcal A_{m,x} =\{(m+x,m),(m,n-2m+2-x),(n-2m+2-x,m+x)\}\]
and 
\[\mathcal B_{\ell,y} =\{(\ell,\ell-y),(\ell-y,2n-2\ell+2+y),(2n-2\ell+2+y,\ell)\}.\]

In order to obtain $\chi(\c)$, first we prove the following claims which will be useful to obtain the chromatic partition of $V(\c)$.\\

\begin{claim}\label{c.r=0}
Let $n = 3a$. Then
\begin{enumerate}
\item[\rm (i)] for  $m \in \{1, 2, \ldots, a\}$ and $x \in \{0, 1, 2, \ldots, n-3m + 1\}$,  $\mathcal A_{m,x}$ are the  disjoint independent subsets of $\c$.

\item[\rm (ii)] for $\ell \in \{2a +  1, 2a + 2, \ldots, n\}$ and $y \in \{0, 1, 2, \ldots, 3\ell-2n - 3\}$,  $\mathcal B_{\ell,y}$ are the  disjoint independent subsets of $\c$.
\end{enumerate}
\end{claim}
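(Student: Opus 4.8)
The plan is to reduce both parts to a single elementary observation about $\c$: two distinct vertices $(a,b)$ and $(c,d)$ are \emph{not} adjacent whenever $b=c$ or $a=d$. This is Remark~\ref{r.non adjacent vertices}, read in both orders, and also drops out of Lemma~\ref{nbd}. The consequence I will use is that for any $u,v,w\in[n]$ such that $(u,v),(v,w),(w,u)$ are three pairwise distinct vertices, the ``cyclic triple'' $\{(u,v),(v,w),(w,u)\}$ is an independent set of size $3$ in $\c$, since each consecutive pair repeats a coordinate in the forbidden position. Both families in the claim are of this shape: $\mathcal A_{m,x}$ is the cyclic triple on $(u,v,w)=(m+x,\,m,\,n-2m+2-x)$, and $\mathcal B_{\ell,y}$ is the cyclic triple on $(u,v,w)=(\ell,\,\ell-y,\,2n-2\ell+2+y)$. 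Thus the claim splits into two routine checks, to be carried out in parallel for the two families: (a) the three listed pairs are genuinely vertices of $\c$ and are pairwise distinct; (b) the members of each family are pairwise disjoint over the stated index ranges.

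For (a), I would bound the entries by plugging the endpoints of the index intervals into the affine expressions and using monotonicity; for instance $n-2m+2-x$ is decreasing in $x$, equals $m+1\ge 2$ at $x=n-3m+1$ and is $\le n$ at $x=0$, so it lies in $[1,n]$, and similarly for the other entries of both families. Pairwise distinctness of the three pairs in a triple is equivalent to $u,v,w$ not all being equal; for $\mathcal A_{m,x}$ this would force $x=0$ and $3m=n+2$, and for $\mathcal B_{\ell,y}$ it would force $y=0$ and $3\ell=2n+2$, both impossible because $3\mid n$. This settles the independence statements in (i) and (ii).

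The substance is (b). I would label the three members of a triple by their position in the displayed order, noting that, because the three positions give three distinct elements (by the $3\mid n$ argument above), every element of a triple has a well-defined position. From a coordinate pair $(p,q)$ occupying a given position one recovers the index injectively: for $\mathcal A_{\bullet}$ the reconstructions are $(m,x)=(q,\,p-q)$, $(m,x)=(p,\,n-2p+2-q)$, $(m,x)=(n+2-p-q,\,p+2q-n-2)$ in positions $1,2,3$; for $\mathcal B_{\bullet}$ they are $(\ell,y)=(p,\,p-q)$, $(\ell,y)=(2n+2-p-q,\,2n+2-2p-q)$, $(\ell,y)=(q,\,p+2q-2n-2)$. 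Injectivity within a position shows two triples sharing a same-position element are equal, so the only thing left is to exclude a ``cross'' coincidence, where some element is position $i$ of one triple and position $j\ne i$ of another. For this I would substitute both reconstructions into the defining inequalities ($1\le m\le a$, $0\le x\le n-3m+1$ for $\mathcal A$; $2a+1\le\ell\le n$, $0\le y\le 3\ell-2n-3$ for $\mathcal B$) and reach a contradictory chain $c+1\le t\le c$ in a suitable integer quantity $t$ — one of $x$, $y$, $p-q$, $p+2q$, $2p+q$ depending on the case. The $\{1,2\}$ cross case for $\mathcal A$ is even cruder: position-$1$ entries satisfy $p\ge q$ while position-$2$ entries satisfy $p<q$, using $3m+x\le n+1$. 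The identical bookkeeping with the $\mathcal B$-reconstructions and ranges then finishes part (ii).

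I expect the only genuinely delicate step to be the cross-position disjointness involving position $3$, whose coordinate pattern is the least transparent: there the sign of $p-q$ is inconclusive, so one has to track the induced value of the parameter ($x$ for $\mathcal A$, $y$ for $\mathcal B$) and verify it overshoots its allowed interval by exactly one. Every other ingredient — the coordinate bounds, the within-triple distinctness, the position-wise injectivity, and the $\{1,2\}$ separation for $\mathcal A$ — is direct substitution, which is why I would present (i) and (ii) together, the two arguments differing only in the explicit affine formulae and the index ranges.
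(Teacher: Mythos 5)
Your proposal is correct and follows essentially the same route as the paper: independence of each triple comes from Remark~\ref{r.non adjacent vertices} applied to the cyclic coordinate pattern, membership in $[n]$ is checked from the index ranges, and disjointness is obtained by equating coordinates of elements from two triples and showing the resulting value of $x$ (resp.\ $y$) overshoots its allowed upper bound $n-3m+1$ (resp.\ $3\ell-2n-3$) by one. Your organization of the disjointness check by ``position'' with explicit reconstruction maps, and the observation that the three entries of a triple are pairwise distinct because $3\mid n$ rules out $3m=n+2$ and $3\ell=2n+2$, is a slightly tidier bookkeeping of the same case analysis the paper carries out.
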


\begin{proof}
\begin{enumerate}
\item[(i)] 
For  $m \in \{1, 2, \ldots, a\}$ and $x \in \{0, 1, 2, \ldots, n-3m + 1\}$, note  that $m, m + x, n-2m+2-x \in [n]$. Thus, $\mathcal A_{m,x} \subseteq B_n$. By Remark \ref{r.non adjacent vertices}, any pair of vertices in $\mathcal A_{m,x}$ are not adjacent and so each  $\mathcal A_{m,x}$ is an independent subset of $\c$. Now we prove that any two distinct subsets $\mathcal A_{m_1, x_1}$ and $\mathcal A_{m_2, x_2}$ are disjoint. If possible, let $(m_1+x_1,m_1)\in \mathcal A_{m_2, x_2}$. Clearly, $(m_1 + x_1, m_1) \ne (m_2 + x_2, m_2)$. Then either $(m_1+x_1,m_1)= (m_2,n-2m_2+2-x_2)$ or $(m_1+x_1,m_1)=(n-2m_2+2-x_2,m_2+x_2)$. If $(m_1+x_1,m_1)=(m_2,n-2m_2+2-x_2)$, we get $m_1+x_1=m_2$ and $m_1=n-2m_2+2-x_2$. As a consequence, $x_2=(n-3m_2+1)+x_1 + 1\geq n-3m_2+1$; a contradiction of $x_2 \leq n -3m_2 +1$. Similarly, for $(m_1+x_1,m_1)=(n-2m_2+2-x_2,m_2+x_2)$, we get $x_1 = n - 3m_1 + 2 + x_2 > n - 3m_1 + 1$; a contradiction.
Thus, $(m_1+x_1,m_1)\notin \mathcal A_{m_2, x_2}$. Analogously, one can check that $(m_2+x_2,m_2)\notin \mathcal A_{m_1, x_1}$. Now, if $(m_1,n-2m_1+2-x_1)\in \mathcal A_{m_2, x_2}$, then either $(m_1,n-2m_1+2-x_1) = (m_2,n-2m_2+2-x_2)$ or $(m_1,n-2m_1+2-x_1)=(n-2m_2+2-x_2,m_2+x_2)$. For $(m_1,n-2m_1+2-x_1) = (m_2,n-2m_2+2-x_2)$, we obtain $m_1 = m_2$ and $x_1 = x_2$; a contradiction so $(m_1,n-2m_1+2-x_1)=(n-2m_2+2-x_2,m_2+x_2)$. Thus, we have $x_2=n-3m_2+2+x_1$ implies $x_2 > n-3m_2+1$; a contradiction.
 Therefore, $(m_1,n-2m_1+2-x_1)\notin \mathcal A_{m_2, x_2}$. By replacing $m_1, x_1$ with $m_2, x_2$ respectively, we get $(m_2,n-2m_2+2-x_2)\notin \mathcal A_{m_1, x_1}$.  If $(n-2m_1+2-x_1,m_1+x_1)\in \mathcal A_{m_2, x_2}$, then $(n-2m_1+2-x_1,m_1+x_1)=(n-2m_2+2-x_2,m_2+x_2)$. As a consequence, $m_1+x_1=m_2+x_2$ and $2m_1+x_1=2m_2+x_2$ gives $m_1=m_2$ and $x_1=x_2$; a contradiction. Thus, $\mathcal A_{m_1, x_1}\cap \mathcal A_{m_2, x_2}=\varnothing$.

\item[(ii)] For  $\ell \in \{2a +  1, 2a + 2, \ldots, n\}$ and $y \in \{0, 1, 2, \ldots, 3\ell-2n - 3\}$,  note that $\ell, \ell - y, 2n - 2\ell + 2 + y \in [n]$. Thus $\mathcal B_{\ell,y} \subseteq B_n$. By Remark \ref{r.non adjacent vertices}, any pair of vertices in $\mathcal B_{\ell,y}$ are not adjacent and so each  $\mathcal B_{\ell,y}$ is an independent subset of $\c$. Now we prove that any two distinct subsets $\mathcal B_{\ell_1, y_1}$ and $\mathcal B_{\ell_2, y_2}$ are disjoint. If possible, let $(\ell_1,\ell_1 - y_1)\in \mathcal B_{\ell_2, y_2}$. Clearly $(\ell_1,\ell_1 - y_1) \ne (\ell_2,\ell_2 - y_2)$. Then, we get either $(\ell_1, \ell_1-y_1) = (\ell_2- y_2 ,2n -2\ell_2 + 2 + y_2)$ or $(\ell_1, \ell_1 - y_1)=(2n -2\ell_2 + 2 + y_2,\ell_2)$. If $(\ell_1, \ell_1-y_1) = (\ell_2- y_2 ,2n -2\ell_2 + 2 + y_2)$, then $\ell_1 = \ell_2 - y_2$ and $\ell _1 - y_1 = 2n - 2\ell_2 + 2 + y_2$ so $\ell_1 - y_1 = 2n - 2(\ell_1 + y_2) + 2 + y_2$. Therefore,  we get $y_1 = (3\ell_1 - 2n - 3) + y_2 + 1$ which is not possible as $y_1 \le 3\ell_1 -2n -3$. As a result, $(\ell_1, \ell_1 - y_1)=(2n -2\ell_2 + 2 + y_2,\ell_2)$ gives $\ell_1 = 2n -2\ell_2 + 2 + y_2$ and $\ell_1 - y_1 = \ell_2$. Consequently, $\ell_2 + y_1 =  2n -2\ell_2 + 2 + y_2$ implies $y_2 = 3\ell_2 -2n -3 + y_1 + 1$; a contradiction of $y_2 \le 3\ell_2 - 2n - 3$. Thus, $(\ell_1, \ell_1 - y_1) \notin \mathcal B_{\ell_2,y_2}$. Analogously, one can show that  $(\ell_2, \ell_2 - y_2) \notin \mathcal B_{\ell_1,y_1}$. If
$(\ell_1-y_1,2n-2\ell_1 + 2 + y_1) \in \mathcal B_{\ell_2,y_2}$, then either $(\ell_1-y_1,2n-2\ell_1 + 2 + y_1) = (\ell_2-y_2,2n-2\ell_2 + 2 + y_2)$ or $(\ell_1-y_1,2n-2\ell_1 + 2 + y_1) = (2n-2\ell_2+2+y_2,\ell_2)$. Suppose $(\ell_1-y_1,2n-2\ell_1 + 2 + y_1) = (\ell_2-y_2,2n-2\ell_2 + 2 + y_2)$, we obtain $\ell_1 - y_1 = \ell_2 - y_2$ and $2n - 2\ell_1 + 2 + y_1 = 2n - 2\ell_2 + 2 + y_2$. It follows that $2(\ell_2 - \ell_1) = \ell_2 - \ell_1$ which is possible only if $\ell_1 = \ell_2$ and $y_1 = y_2$; a contradiction. Therefore, we get $(\ell_1-y_1,2n-2\ell_1 + 2 + y_1) = (2n-2\ell_2+2+y_2,\ell_2)$ and this implies $\ell_1 - y_1 = 2n - 2(2n - 2\ell_1 +2 + y_1) + 2 + y_2$. As a result, $y_1  = 3\ell_1 - 2n - 3 + y_2 + 1$ which is not possible as $y_1 \le 3\ell_1 -2n -3$. Thus, $(\ell_1-y_1,2n-2\ell_1 + 2 + y_1) \notin \mathcal B_{\ell_2,y_2}$.  In a similar lines one can show that $(\ell_2-y_2,2n-2\ell_2 + 2 + y_2) \notin \mathcal B_{\ell_1,y_1}$. Thus, if $\mathcal B_{\ell_1,y_1} \cap \mathcal B_{\ell_2,y_2} \ne \varnothing $, then we must have $(2n - 2\ell_1 + 2 + y_1, \ell_1 ) = (2n - 2\ell_2 + 2 + y_2, \ell_2)$. It follows that $\ell_1 = \ell_2$ and $y_1  = y_2$; again a contradiction. Hence, the result hold.
\end{enumerate}
\end{proof}

The proof of the following claims is in the similar lines to the proof of the \textbf{Claim \ref{c.r=0}}, hence omitted.

\begin{claim}\label{c.r=1}
Let $n = 3a + 1$. Then
\begin{enumerate}
\item[\rm (i)] for $m \in \{1, 2, \ldots, a\}$ and $x \in \{0, 1, 2, \ldots, n-3m + 1\}$,  $\mathcal A_{m,x}$ are the  disjoint independent subsets of $\c$.
		
\item[\rm (ii)] for $\ell \in \{2a +  2, 2a + 3, \ldots, n\}$ and $y \in \{0, 1, 2, \ldots, 3\ell-2n - 3\}$,  $\mathcal B_{\ell,y}$ are the  disjoint independent subsets of $\c$.
\end{enumerate}
\end{claim}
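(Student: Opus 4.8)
The plan is to run, \emph{mutatis mutandis}, the argument given for \textbf{Claim \ref{c.r=0}}. The point is that for $n = 3a+1$ the defining formulas
$\mathcal A_{m,x}=\{(m+x,m),(m,n-2m+2-x),(n-2m+2-x,m+x)\}$ and
$\mathcal B_{\ell,y}=\{(\ell,\ell-y),(\ell-y,2n-2\ell+2+y),(2n-2\ell+2+y,\ell)\}$, together with the index ranges for $m$, $x$ and $y$, are literally the same as in the case $n=3a$; the only change is that $\ell$ now runs over $\{2a+2,\ldots,n\}$ instead of $\{2a+1,\ldots,n\}$, and I will check that this shift is both forced and harmless. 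Since none of the identities or inequalities used in \textbf{Claim \ref{c.r=0}} refers to the residue $r$, the entire reasoning transfers; below I indicate how.

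First I would check membership, i.e.\ $\mathcal A_{m,x}\subseteq B_n$ and $\mathcal B_{\ell,y}\subseteq B_n$ (equivalently, that all three coordinates lie in $[n]$). For part (i): if $1\le m\le a$ and $0\le x\le n-3m+1$, then $1\le m+x\le n-2m+1\le n-1$ and $m+1\le n-2m+2-x\le n-2m+2\le n$. For part (ii): if $2a+2\le \ell\le n$ and $0\le y\le 3\ell-2n-3$, then $3\le \ell-y\le n$ and $2\le 2n-2\ell+2+y\le \ell-1\le n-1$; moreover $3\ell-2n-3\ge 0$ holds precisely when $\ell\ge \lceil(2n+3)/3\rceil=2a+2$, which is exactly the (only) reason the lower bound on $\ell$ is raised by one relative to the $r=0$ case. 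Next, each of these sets has the cyclic shape $\{(\alpha,\beta),(\beta,\gamma),(\gamma,\alpha)\}$ — with $(\alpha,\beta,\gamma)=(m+x,m,n-2m+2-x)$ for $\mathcal A_{m,x}$ and $(\ell,\ell-y,2n-2\ell+2+y)$ for $\mathcal B_{\ell,y}$ — so Remark \ref{r.non adjacent vertices} applies to each of its three pairs and shows no two members are adjacent; hence every $\mathcal A_{m,x}$ and every $\mathcal B_{\ell,y}$ is an independent subset of $\c$.

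For disjointness I would copy the case analysis of \textbf{Claim \ref{c.r=0}}. Assuming $\mathcal A_{m_1,x_1}\neq \mathcal A_{m_2,x_2}$ and $\mathcal A_{m_1,x_1}\cap\mathcal A_{m_2,x_2}\neq\varnothing$, one splits according to which of the three members of $\mathcal A_{m_1,x_1}$ lies in $\mathcal A_{m_2,x_2}$ and which member of $\mathcal A_{m_2,x_2}$ it coincides with; by the symmetry $(m_1,x_1)\leftrightarrow(m_2,x_2)$ only a handful of cases need to be treated explicitly, the last one being the forced equality $(n-2m_1+2-x_1,m_1+x_1)=(n-2m_2+2-x_2,m_2+x_2)$. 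In each case, solving the resulting pair of linear equations either forces $(m_1,x_1)=(m_2,x_2)$, or produces $x_1=(n-3m_1+1)+x_2+1$ (resp.\ $x_2=(n-3m_2+1)+x_1+1$), contradicting $x_i\le n-3m_i+1$ since $x_j\ge 0$. The identical scheme, with the substitution patterns of \textbf{Claim \ref{c.r=0}}(ii), handles the family $\{\mathcal B_{\ell,y}\}$, the contradictions now coming from $y_i\le 3\ell_i-2n-3$ together with $y_j\ge 0$.

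The only "obstacle" is the bookkeeping in these disjointness cases, and it is purely mechanical; crucially it is \emph{word-for-word} the bookkeeping of \textbf{Claim \ref{c.r=0}}, because the formulas and the range constraints are unchanged and never involve $r$. This is exactly why the text declares the proof "in the similar lines \dots\ hence omitted," and the remarks above show how to reconstruct it in full.
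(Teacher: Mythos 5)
Your proposal is correct and matches the paper's intent exactly: the paper omits this proof, stating only that it follows the same lines as Claim \ref{c.r=0}, and your reconstruction (membership of all coordinates in $[n]$, independence via Remark \ref{r.non adjacent vertices} applied to the cyclic triples, and the verbatim disjointness case analysis with contradictions from the bounds $x_i\le n-3m_i+1$ and $y_i\le 3\ell_i-2n-3$) is precisely that argument. Your observation that the lower bound $\ell\ge 2a+2$ is forced by $\lceil(2n+3)/3\rceil=2a+2$ when $n=3a+1$ is a correct and worthwhile addition that the paper leaves implicit.
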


\begin{claim}\label{c.r=2}
Let $n = 3a + 2$. Then
\begin{enumerate}
\item[\rm (i)] for $m \in \{1, 2, \ldots, a+1\}$ and $x \in \{0, 1, 2, \ldots, n-3m + 1\}$,  $\mathcal A_{m,x}$ are the  disjoint independent subsets of $\c$.
		
\item[\rm (ii)] for  $\ell \in \{2a +  3, 2a + 4, \ldots, n\}$ and $y \in \{0, 1, 2, \ldots, 3\ell-2n - 3\}$,  $\mathcal B_{\ell,y}$ are the  disjoint independent subsets of $\c$.
\end{enumerate}
\end{claim}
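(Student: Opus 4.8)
The plan is to run the proof of Claim~\ref{c.r=0} almost verbatim; the only genuinely $r$-dependent ingredient is the choice of the index ranges for $m$ and $\ell$, so I would isolate that first and then observe that everything else is untouched.

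For part~(i), the first step is to check $\mathcal A_{m,x}\subseteq B_n$. With $m\ge 1$ and $0\le x\le n-3m+1$ one gets $1\le m+x\le m+(n-3m+1)=n-2m+1\le n-1$, and $m+1\le n-2m+2-x\le n-2m+2\le n$ (the lower bound using $x\le n-3m+1$), so $m,\,m+x,\,n-2m+2-x\in[n]$. Here the only real constraint is $n-3m+1\ge 0$, i.e. $m\le\lfloor(n+1)/3\rfloor$, which is exactly why the range of $m$ becomes $\{1,\dots,a\}$ when $n=3a+1$ and $\{1,\dots,a+1\}$ when $n=3a+2$. Next, writing out the three ordered pairs forming $\mathcal A_{m,x}$ — namely $(m+x,m)$, $(m,n-2m+2-x)$, $(n-2m+2-x,m+x)$ — any two of them are of the shape $(i,j)$ and $(k,i)$, so by Remark~\ref{r.non adjacent vertices} they are pairwise non-adjacent and $\mathcal A_{m,x}$ is independent.

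For the disjointness of the $\mathcal A_{m,x}$ I would copy the case analysis of Claim~\ref{c.r=0}(i): assuming $\mathcal A_{m_1,x_1}\cap\mathcal A_{m_2,x_2}\neq\varnothing$ with $(m_1,x_1)\neq(m_2,x_2)$, split according to which entry of $\mathcal A_{m_1,x_1}$ coincides with which entry of $\mathcal A_{m_2,x_2}$; in every case the two coordinate equalities together with $x_i\le n-3m_i+1$ force a contradiction (for instance $(m_1+x_1,m_1)=(m_2,n-2m_2+2-x_2)$ gives $x_2=(n-3m_2+1)+x_1+1\ge n-3m_2+1$, impossible). Since this bound does not involve $r$, the computations are literally those of Claim~\ref{c.r=0}, yielding $\mathcal A_{m_1,x_1}\cap\mathcal A_{m_2,x_2}=\varnothing$.

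Part~(ii) is handled identically for $\mathcal B_{\ell,y}=\{(\ell,\ell-y),(\ell-y,2n-2\ell+2+y),(2n-2\ell+2+y,\ell)\}$: from $0\le y\le 3\ell-2n-3$ and $\ell\le n$ one checks $\ell,\,\ell-y,\,2n-2\ell+2+y\in[n]$, the constraint $3\ell-2n-3\ge 0$ being what pins the range of $\ell$ to $\{2a+2,\dots,n\}$ when $n=3a+1$ and to $\{2a+3,\dots,n\}$ when $n=3a+2$; independence again follows from Remark~\ref{r.non adjacent vertices} since any two of the three pairs are of the form $(i,j)$, $(k,i)$; and the disjointness case analysis of Claim~\ref{c.r=0}(ii), using $y_i\le 3\ell_i-2n-3$, carries over with no change. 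The one place deserving actual care is therefore this arithmetic of the index ranges — and, if one also wants each $\mathcal A_{m,x}$ and $\mathcal B_{\ell,y}$ to genuinely have three distinct entries for the later chromatic partition, the elementary check that the ranges exclude the degeneracy $3m=n+2$ (which is where the residue $r$ truly enters) — after which the proof of Claim~\ref{c.r=0} applies word for word.
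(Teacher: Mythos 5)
Your proposal is correct and matches the paper exactly: the paper itself omits this proof, stating that it follows the same lines as Claim~\ref{c.r=0}, which is precisely what you do, with the added (and accurate) verification that the residue of $n$ modulo $3$ enters only through the arithmetic pinning down the index ranges for $m$ and $\ell$.
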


In view of above claims, a visual representation of $\mathcal A_{m,x}$ and $\mathcal B_{\ell, y}$ can be observed in the matrix given in Figure \ref{fig-A_mx}. Independent sets $\mathcal A_{1, x}, \mathcal A_{2, x}, \cdots$ covers the vertices through dashed triangles, whereas the independent sets $\mathcal B_{n, y}, \mathcal B_{n-1, y}, \cdots$ covers the vertices of $V(\c)$ on doted triangles as shown in Figure \ref{fig-A_mx}.
\begin{figure}[h!]
	\centering
	\includegraphics[width=0.8 \textwidth]{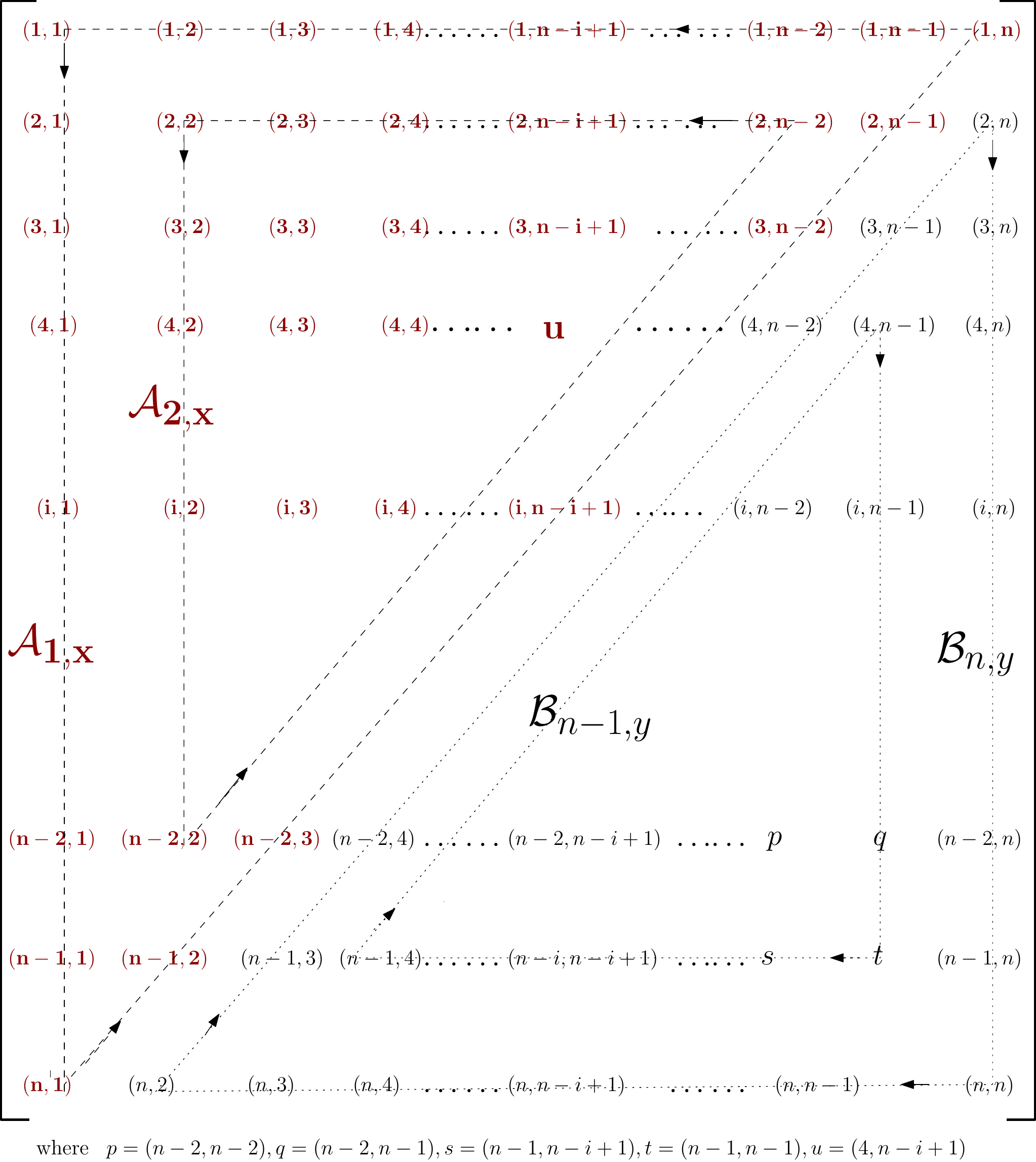}
	\caption{Visual Representation of $\mathcal A_{m,x}$ and $\mathcal B_{\ell, y}$}\label{fig-A_mx}
\end{figure}


\begin{theorem}\label{chromatic number}
For $n \ge 2$, we have $\chi(\c) = \left \lceil \frac{n^2}{3}\right\rceil$.
\end{theorem}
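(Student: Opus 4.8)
The plan is to prove the two inequalities $\chi(\c) \le \left\lceil \frac{n^2}{3}\right\rceil$ and $\chi(\c) \ge \left\lceil \frac{n^2}{3}\right\rceil$ separately. The lower bound is the easy direction: by Theorem \ref{independence number}(i), every independent set in $\c$ has size at most $3$, so in any proper colouring every colour class has at most $3$ vertices; since $|V(\c)| = n^2 - 1$, this forces $\chi(\c) \ge \left\lceil \frac{n^2 - 1}{3}\right\rceil$. A small arithmetic check modulo $3$ shows $\left\lceil \frac{n^2-1}{3}\right\rceil = \left\lceil \frac{n^2}{3}\right\rceil$ unless $3 \mid n^2$, i.e. unless $3 \mid n$; in that remaining case one must rule out a colouring with exactly $\frac{n^2}{3} - 1$ classes, which would need all but one class to have size exactly $3$ and would correspond to an almost-perfect partition of $V(\c)$ into triangles of the independence graph — this parity/counting refinement is where a little care is needed for $n \equiv 0 \pmod 3$. (Alternatively, and more cleanly, the matching lower and upper bounds together with the explicit partition below simply force equality, so the lower-bound subtlety for $3\mid n$ is absorbed once the construction is in hand.)

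The upper bound is the substantive part, and the idea is to exhibit an explicit proper colouring of $\c$ with exactly $\left\lceil \frac{n^2}{3}\right\rceil$ colours, i.e. to partition $V(\c) = ([n]\times[n]) \setminus\{0\}$ into $\left\lceil \frac{n^2}{3}\right\rceil$ independent sets. The building blocks are precisely the triples $\mathcal A_{m,x}$ and $\mathcal B_{\ell,y}$ introduced before Claim \ref{c.r=0}: each such triple has the form $\{(p,q),(q,r),(r,p)\}$, which by Remark \ref{r.non adjacent vertices} is an independent set (no two of these are adjacent since each is of the shape $(i,j),(k,i)$). By Claims \ref{c.r=0}, \ref{c.r=1}, \ref{c.r=2} (according to the residue $r$ of $n$ mod $3$), the $\mathcal A_{m,x}$ are pairwise disjoint, the $\mathcal B_{\ell,y}$ are pairwise disjoint, and (as visualised in Figure \ref{fig-A_mx}) the $\mathcal A$-family sweeps out the "upper" part of the $n\times n$ grid while the $\mathcal B$-family sweeps out the "lower" part, so that together they cover $[n]\times[n]$ save for the leftover entries near the main diagonal. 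First I would do the bookkeeping: count how many cells are covered by $\bigcup_{m,x}\mathcal A_{m,x}$ and $\bigcup_{\ell,y}\mathcal B_{\ell,y}$ — each contributes $3$ cells and the index ranges are $m\in\{1,\dots,a\}$ (or $a+1$), $x\in\{0,\dots,n-3m+1\}$, etc., so this is a finite sum $\sum_m 3(n-3m+2)$ that evaluates to a clean quadratic in $a$. Subtracting from $n^2$ leaves a small, explicitly describable set $R$ of uncovered cells (its size being $0$, $1$, or $2$ depending on $n \bmod 3$, in line with the $\left\lceil \cdot \right\rceil$), which I would then colour with at most one extra colour after checking $R$ itself is independent (again via Remark \ref{r.non adjacent vertices}, since the leftover cells cluster around the diagonal in an $(i,j),(k,i)$ pattern).

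Finally I would assemble the count: the number of colour classes used is $\#\{\mathcal A_{m,x}\} + \#\{\mathcal B_{\ell,y}\} + \varepsilon$, where $\varepsilon\in\{0,1\}$ accounts for the residual set $R$, and verify that this total equals $\left\lceil \frac{n^2}{3}\right\rceil$ in each of the three cases $n=3a$, $n=3a+1$, $n=3a+2$ — a routine but case-dependent computation. Combined with the lower bound $\chi(\c)\ge\left\lceil\frac{n^2}{3}\right\rceil$ from the independence number, this yields equality. I would also separately dispatch the base cases $n=2,3$ by inspection (for $n=2$, $\c$ has $3$ vertices forming an independent set, so $\chi=1=\lceil 4/3\rceil$; for $n=3$ one reads off $\chi(\Delta(B_3))=3=\lceil 9/3\rceil$ from the partition $A_1,A_2,A_3$ already exhibited in the proof of Theorem \ref{independence number}(iv)). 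The main obstacle is purely organisational rather than conceptual: keeping the index ranges, the coverage claim, and the residue arithmetic consistent across the three congruence classes of $n$, and pinning down exactly which diagonal-adjacent cells remain and confirming they form an independent set.
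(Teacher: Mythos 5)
Your overall strategy coincides with the paper's: lower bound from $\alpha(\c)=3$ via $\chi\ge |V|/\alpha$, upper bound from an explicit partition into the triples $\mathcal A_{m,x}$, $\mathcal B_{\ell,y}$ plus (possibly) one leftover diagonal cell, with the bookkeeping split by $n \bmod 3$. That is the right proof. However, there is a concrete error running through your write-up: $|V(\c)| = n^2$, not $n^2-1$. The semigroup $B_n$ has $n^2+1$ elements and its centre is exactly $\{0\}$ (no element $(i,j)$ is central, since e.g. $(i,i)(i,l)=(i,l)\ne 0=(i,l)(i,i)$ for $l\ne i$), so all $n^2$ pairs are vertices. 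With the correct count the lower bound is simply $\chi(\c)\ge \lceil n^2/3\rceil$ in every case, and the entire "subtlety for $3\mid n$" you flag — ruling out a colouring with $\frac{n^2}{3}-1$ classes via an almost-perfect triangle partition — is a phantom; no such refinement is needed, and the paper does none. (Indeed, with your miscount the residues where the naive bound would fall one short are $n\equiv 1,2\pmod 3$, not $n\equiv 0$, so the case you single out is the one case where your bound happens to be fine.) The same miscount propagates to your claim that the residual set $R$ has size $0$, $1$, or $2$: since $n^2\equiv 0$ or $1\pmod 3$, the leftover after removing disjoint triples is $0$ or $1$ cells (a single idempotent when $3\nmid n$, nothing when $3\mid n$), which is what the paper's construction produces.

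Your base case $n=2$ is also wrong on its own terms: $\Delta(B_2)$ has the four vertices $(1,1),(1,2),(2,1),(2,2)$, its only edge is $(1,1)\sim(2,2)$, so $\chi(\Delta(B_2))=2=\lceil 4/3\rceil$, not $1$ (and $\lceil 4/3\rceil=2$ in any case). Once you replace $n^2-1$ by $n^2$ throughout, the lower-bound paragraph collapses to one line, the leftover analysis matches the paper's, and the rest of your outline — independence of each triple via Remark \ref{r.non adjacent vertices}, disjointness via Claims \ref{c.r=0}--\ref{c.r=2}, and the coverage count $\sum 3(n-3m+2)$ — is exactly the paper's argument.
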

\begin{proof}
For $n = 2$, it is straightforward that $\chi(\c) = 2$. Since  $\chi(\mathcal G)\geq \frac{|V(\mathcal G)|}{\alpha{(\mathcal G})}$  (cf.  {\cite[Proposition 5.1.7]{b.West}}) so that by Lemma \ref{independence number}, we have $\chi (\Delta (B_n))\geq \frac{n^2}{3}$. In order to  obtain the result, we shall provide a partition of $V(\c)$ into $\left \lceil \frac{n^2}{3}\right\rceil$ independent subsets. Now, we have the following cases:\\
\textbf{Case 1:} $n = 3a$. First we prove that the sets $\mathcal A_{m,x}$ and $\mathcal B_{\ell,y}$, where $m, x, \ell, y$ are given in \textbf{Claim \ref{c.r=0}} are disjoint with each other. Note that for $(i,j)\in \mathcal A_{m,x}$ and $(k,t)\in \mathcal B_{\ell,y}$, we have $i+j\leq n+1$ and $k+t\geq n+2$. Thus, $\mathcal A_{m,x}\cap \mathcal B_{\ell,y}=\varnothing$. Now we shall show that $\mathcal A_{m,x}$ and $\mathcal B_{\ell,y}$ forms a partition of $V(\c)$. It is sufficient to show that  $|(\cup \mathcal A_{m,x})\cup(\cup \mathcal B_{\ell,y})|=n^2$. If $m = a, a-1, \ldots, 1$ then $x \in \{0, 1\}, \; x \in \{0,1, 2, 3, 4\}, \ldots, x \in \{0, 1, 2, \ldots, n-2 \}$, respectively. Thus, the total number of  sets of the form $\mathcal A_{m,x}$ is $2 +5 + \cdots + n-1 = \frac{n(n+1)}{6}$. Similarly,  the total number of the sets of the form $\mathcal B_{\ell,y}$ is $1 + 4 + \cdots + n-2 = \frac{n(n - 1)}{6}$.  Consequently, we have  $(\cup \mathcal A_{m,x})\cup(\cup \mathcal B_{\ell,y})=V(\c)$. Thus, we have a partition of $V(\c)$ into $\frac{n^2}{3}$ independent sets. Therefore, $\chi (\Delta (B_n))\leq{\frac{n^2}{3}}$. Hence, $\chi (\Delta (B_n))={\frac{n^2}{3}}$.\\
  \textbf{Case 2:} $n = 3a + 1$. By the similar arguments used in \textbf{Case 1}, the sets $\mathcal A_{m,x}$ and $\mathcal B_{\ell,y}$, where $m, x, \ell, y$ are given in \textbf{Claim \ref{c.r=1}} are disjoint with each other. Now, we shall show that the number of elements in the union of sets given in \textbf{Claim \ref{c.r=1}} is $ n^2 - 1$. If $m = a, a-1, \ldots, 1$ then $x \in \{0, 1, 2\}, \; x \in \{0,1, 2, 3, 4, 5\}, \ldots, x \in \{0, 1, 2, \ldots, n-2 \}$, respectively. Thus, the total number of the sets of the form $\mathcal A_{m,x}$ is $3 + 6 + \cdots + n-1 = \frac{(n+2)(n-1)}{6}$. Similarly,  the total number of  sets of the form $\mathcal B_{\ell,y}$ is $2 + 5 + \cdots + n-2 = \frac{n(n - 1)}{6}$. Consequently, we  get $|(\cup \mathcal A_{m,x})\cup(\cup \mathcal B_{\ell,y})|= n^2 - 1$. Note that the set  $C = \{(a+1, a+1)\}$ is disjoint with $\mathcal A_{m,x}$ and $\mathcal B_{\ell,y}$.  Thus, the sets $\mathcal A_{m,x}, \; \mathcal B_{\ell,y}$ and $C$ forms a partition of $V(\c)$. Therefore, $\chi (\Delta (B_n))\leq{\frac{n^2 - 1}{3}} +1 = \left \lceil \frac{n^2}{3}\right\rceil$. Hence, $\chi (\Delta (B_n)) = \left \lceil \frac{n^2}{3}\right\rceil$.\\
\textbf{Case 3:}  $n = 3a + 2$. By the similar concept used in \textbf{Case 1}, the sets $\mathcal A_{m,x}$ and $\mathcal B_{\ell,y}$, where $m, x, \ell, y$ are given in \textbf{Claim \ref{c.r=2}} are disjoint with each other.  Now, we shall show that the number of elements in the union of above defined sets is $ n^2 - 1$. If $m = a +1, a, \ldots, 1$ then $x \in \{0\}, \; x \in \{0,1, 2, 3\}, \ldots, x \in \{0, 1, 2, \ldots, n-2 \}$, respectively. Thus, the total number of  sets of the form $\mathcal A_{m,x}$ is $1 + 4 + \cdots + n-1 = \frac{n(n + 1)}{6}$. Similarly,  the total number of the sets of the form $\mathcal B_{\ell,y}$ is $3 + 6 + \cdots + n-2 = \frac{(n + 1)(n - 2)}{6}$. Consequently, we  get $|(\cup \mathcal A_{m,x})\cup(\cup \mathcal B_{\ell,y})|= n^2 - 1$. Note that the set  $C = \{(2a+2, 2a+2)\}$ is disjoint with $\mathcal A_{m,x}$ and $\mathcal B_{\ell,y}$.  Thus, the sets $\mathcal A_{m,x}, \; \mathcal B_{\ell,y}$ and $C$ forms a partition of $V(\c)$. Therefore, $\chi (\Delta (B_n))\leq{\frac{n^2 - 1}{3}} + 1 = \left \lceil \frac{n^2}{3}\right\rceil$ so that  $\chi(\c) = \left \lceil \frac{n^2}{3}\right\rceil$.
\end{proof}

\begin{theorem}\label{vertex-connectivity}
For $n \ge 3$, the vertex connectivity of $\c$ is $n(n-2)$.
\end{theorem}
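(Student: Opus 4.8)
The plan is to prove the two inequalities $\kappa(\c)\le n(n-2)$ and $\kappa(\c)\ge n(n-2)$ separately. The first is immediate: $\c$ is connected by Theorem~\ref{Hamiltonian}(iv), and by Corollary~\ref{degree of all B_n} the minimum degree $\delta(\c)$ equals $n(n-2)$ (attained at every non-idempotent vertex, since $n(n-2)<(n-1)^2$), so the chain $\kappa(\c)\le\kappa'(\c)\le\delta(\c)$ recorded in Section~2 yields $\kappa(\c)\le n(n-2)$. The substance of the theorem is the reverse inequality, which I would deduce from a purely combinatorial claim about cuts.

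First I would record the adjacency rule in a convenient form that a direct reading of Lemma~\ref{nbd} provides: two distinct vertices $(i,j)$ and $(k,l)$ are adjacent in $\c$ if and only if $j\ne k$ and $l\ne i$; equivalently, they are non-adjacent precisely when $j=k$ or $l=i$. Now let $S$ be any vertex cut-set of $\c$ and write $V(\c)\setminus S=A\sqcup B$, where $A$ is a single connected component of $\c-S$ and $B$ is the union of the remaining components; then $A$ and $B$ are nonempty, disjoint, and no edge of $\c$ joins a vertex of $A$ to a vertex of $B$. Since $|V(\c)|=n^2$, it suffices to prove that every such pair satisfies $|A|+|B|\le 2n$, for then $|S|=n^2-|A|-|B|\ge n^2-2n=n(n-2)$, as required.

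To prove $|A|+|B|\le 2n$, note that the absence of edges between $A$ and $B$ means $b=c$ or $a=d$ for all $(a,b)\in A$ and $(c,d)\in B$. Writing $R_t=\{(t,s):s\in[n]\}$ and $C_t=\{(s,t):s\in[n]\}$, this is equivalent to the two inclusions $B\subseteq\bigcap_{(a,b)\in A}(R_b\cup C_a)$ and $A\subseteq\bigcap_{(c,d)\in B}(C_c\cup R_d)$. The key computation is that if $A$ contains two vertices $(a_1,b_1),(a_2,b_2)$ with $a_1\ne a_2$ and $b_1\ne b_2$, then $(R_{b_1}\cup C_{a_1})\cap(R_{b_2}\cup C_{a_2})=\{(b_1,a_2),(b_2,a_1)\}$, forcing $|B|\le 2$; the mirror statement forces $|A|\le 2$ when $B$ has two vertices differing in both coordinates. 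On the other hand, if no two vertices of $A$ differ in both coordinates, then any two share a first coordinate or a second coordinate, and a short argument shows $A$ must lie entirely in one $R_t$ or one $C_t$, so $|A|\le n$; and likewise for $B$. One then splits into cases according to which alternative holds for $A$ and for $B$: if both lie in a single row or column we get $|A|+|B|\le n+n=2n$; and whenever one side, say $B$, has size at most $2$, the displayed inclusion $A\subseteq\bigcap_{(c,d)\in B}(C_c\cup R_d)$ together with $A\cap B=\varnothing$ pins down $|A|$ (e.g.\ if $B=\{(c,d)\}$ then $A\subseteq C_c\cup R_d$ and $(c,d)\notin A$, so $|A|\le 2n-1$, while if $|B|=2$ the intersection is either a column/row or a two-element set). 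In each case the elementary estimates give $|A|+|B|\le 2n$ for $n\ge 3$, proving $\kappa(\c)\ge n(n-2)$ and hence equality.

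I expect the main obstacle to be organizational rather than deep: the non-adjacency condition ``$b=c$ or $a=d$'' is asymmetric in the two coordinates, so one must be careful to keep both inclusions $B\subseteq\bigcap(R_b\cup C_a)$ and $A\subseteq\bigcap(C_c\cup R_d)$ in play and to run the case analysis symmetrically. Once the ``differ in both coordinates'' dichotomy has cut down the possibilities, each case is a one-line comparison among the quantities $2$, $n$, $2n-1$, $2n$. Finally, the bound $2n$ is sharp: taking $A=\{v\}$ with $v=(i,j)$, $i\ne j$, and $B$ its set of non-neighbours gives $|A|+|B|=1+(2n-1)=2n$, matching the cut $S=N(v)$ and showing $\kappa(\c)=n(n-2)$ exactly.
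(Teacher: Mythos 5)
Your proposal is correct, but it proves the lower bound $\kappa(\Delta(B_n))\ge n(n-2)$ by a genuinely different route from the paper. The paper invokes Menger's theorem and, for every pair of vertices $(a,b),(c,d)$, explicitly constructs $n(n-2)$ internally disjoint paths, which requires a lengthy case analysis (idempotent/idempotent, mixed, non-idempotent/non-idempotent, each with several subcases depending on coincidences among $a,b,c,d$). You instead bound the size of an arbitrary vertex cut directly: writing the complement of a cut as $A\sqcup B$ with no edges between them, the non-adjacency criterion ``$b=c$ or $a=d$'' traps $B$ inside $\bigcap_{(a,b)\in A}(R_b\cup C_a)$ and $A$ inside $\bigcap_{(c,d)\in B}(C_c\cup R_d)$, and your dichotomy (either a set contains two vertices differing in both coordinates, forcing the other side to have at most $2$ elements, or it lies in a single row or column and has at most $n$ elements) yields $|A|+|B|\le 2n$ in every case; I checked the residual subcases ($|B|=1$ gives $|A|\le|C_c\cup R_d|=2n-1$; $|B|=2$ gives $|A|\le n$ or $|A|\le 2$) and they all close for $n\ge 3$. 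Combined with $\kappa\le\delta=n(n-2)$ this gives equality. Your argument is shorter and exposes the structural reason for the answer (the non-neighbourhood of any vertex sits inside one row plus one column, of total size $2n-1$); the paper's Menger construction is longer but produces the disjoint paths explicitly, which could be reused elsewhere. Both are valid proofs of the theorem.
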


\begin{proof} By Theorem 4.1.9 of \cite{b.West} and Corollary \ref{degree of all B_n}, we have $\kappa (\c) \leq n(n-2)$.  By Menger's theorem  (cf. \cite[Theorem 3.2]{b.bondy1976graph}), to prove another inequality, it is sufficient to show that there exist at least $n(n - 2)$ internally disjoint paths between arbitrary pair of vertices. Let $(a, b)$ and $(c, d)$ be arbitrary pair of vertices in $V(\c)$. Now consider
	\[A = \{(b,x) : \; x \in [n] \} \cup \{(x, a) : \; x \in [n] \} \]
	and
	\[B = \{(d,x) : \; x \in [n] \} \cup \{(x, c) : \; x \in [n] \}. \]
Note that $|A| = |B| = 2n-1$ and each element of $A$ and $B$ is not adjacent with $(a, b)$ and $(c, d)$, respectively (see Remark \ref{r.non adjacent vertices}). If $T = A \cup B \cup \{(a, b), (c, d) \}$, then note that every element of $T' = V(\c) \setminus T$, commutes with $(a, b)$ and $(c, d)$. Thus, for each element $(x, y)$ of  $T'$, we have a path $(a, b) \sim (x, y) \sim (c, d)$. Consequently, there are at least $|T'|$ many internally disjoint paths between $(a, b)$ and $(c, d)$. We show that there exist $n(n-2)$ internally disjoint paths between $(a, b)$ and $(c, d)$ in the following cases.\\
\textbf{Case 1:} Both $(a, b)$ and $(c, d)$ are distinct idempotents. Clearly $a = b, c= d$ and $a \ne c$. Then, we have $A \cap B = \{(a, c), (c, a) \}$ so that $|T'| = n^2 - 4n + 4$. As a consequence, we get  $n^2 - 4n + 4$ internally disjoint paths between $(a, b)$ and $(c, d)$. Furthermore, for $x \in [n] \setminus \{a, c \}$,  we have
$ (a, a) \sim (c, x) \sim (a, x) \sim (c, c) \; \text{and} \;  (a, a) \sim (x, c) \sim (x, a) \sim (c, c)$
internally disjoint paths between $(a, b)$ and $(c, d)$  which are $2n - 4$  in total.  Thus, there are at least $n^2 - 2n$ internally disjoint paths between $(a, b)$ and $(c, d)$.\\
\textbf{Case 2:} Either $(a, b)$ or $(c,d)$ is idempotents. Without loss of generality, let $c = d$. Further, we have the following subcases.

\textit{Subcase 2.1:} $c \ne a, b$. Then $A \cap B = \{(b, c), (c, a) \}$ so that $|T'| = n^2 - 4n + 3$. Consequently, we get $n^2 - 4n + 3$ internally disjoint paths between $(a, b)$ and $(c, d)$. In addition to that, for $x \in [n] \setminus \{a, b, c \}$,  we have
\[ (a, b) \sim (c, x) \sim (b, x) \sim (c, c), \]
\[ (a, b) \sim (x, c) \sim (x, a) \sim (c, c)  \]
internally disjoint paths between $(a, b)$ and $(c, d)$  which are $2n - 6$  in total. Further, we have three more  paths between $(a, b)$ and $(c, d)$ as follows:
\[(a, b) \sim (c, b) \sim (a, a) \sim (c, c),\]
\[(a, b) \sim (a, c) \sim (b, b) \sim (c, c),\]
\[(a, b) \sim (c, c).\]
Thus, there are at least $n^2 - 2n$ internally disjoint paths between $(a, b)$ and $(c, d)$.

\textit{Subcase 2.2:} $c= a$ or $c = b$. First suppose $c = a$. Then, we have $A \cap B = \{(x, a) : x \in [n] \}$ so that $|T'| = n^2 - 3n + 2$. Therefore, $\c$ contains  $n^2 - 3n + 2$ internally disjoint paths between $(a, b)$ and $(c, d)$. Additionally, for $x \in [n] \setminus \{a, b \}$,  we have $n - 1$ internally disjoint paths
$ (a, b) \sim (a, x) \sim (b, x) \sim (a, a)$
between $(a, b)$ and $(c, d)$. Thus, there are at least $n^2 - 2n$ internally disjoint paths between $(a, b)$ and $(c, d)$. Similarly, for $ c =b$, at least $n^2 - 2n$ internally disjoint paths between $(a, b)$ and $(c, d)$ can be obtained.\\
\textbf{Case 3:} Both $(a, b)$ and $(c, d)$ are non-idempotent element. Clearly, $a \ne b$ and $c \ne d$. Further, we have the following subcases.

\textit{Subcase 3.1:} $a, b, c, d$  all are distinct. Then, we have $A \cap B = \{(b, c), (d, a) \}$ so that $|T'| = n^2 - 4n + 2$. Thus, there are $n^2 - 4n + 2$ internally disjoint paths between $(a, b)$ and $(c, d)$. In addition to that, for $x \in [n] \setminus \{a, b, c, d \}$,  we have $(a, b) \sim (x, c) \sim (x, a) \sim (c, d) \; \text{and}  \; (a, b) \sim (d, x) \sim (b, x) \sim (c, d)$
internally disjoint paths between $(a, b)$ and $(c, d)$  which are $2n -8$  in total. Moreover, we have six additional  paths between $(a, b)$ and $(c, d)$ as follows:
\[(a, b) \sim (a, c) \sim (b, b) \sim (c, d),\]
\[(a, b) \sim (c, c) \sim (b, d) \sim (c, d),\]
\[(a, b) \sim (d, c) \sim (a,a) \sim (c, d),\]
\[(a, b) \sim (d, d) \sim (b, a) \sim (c, d),\]
\[(a, b) \sim (d, b) \sim (c, a) \sim (c, d),\]
\[(a, b) \sim (c, d).\]
Thus, there are at least $n^2 - 2n$ internally disjoint paths between $(a, b)$ and $(c, d)$.\\

\textit{Subcase 3.2:} $c \in \{a, b\}$. If $c = a$, then $A \cap B = \{(x, a) \; : \; x \in [n] \}$ so that $|T'| = n^2 - 3n$. Therefore, $\c$ contains  $n^2 - 3n$ internally disjoint paths between $(a, b)$ and $(c, d)$. Additionally, for $x \in [n] \setminus \{a, b, d \}$,  we have
$ (a, b) \sim (d, x) \sim (b, x) \sim (a, d)$
internally disjoint paths between $(a, b)$ and $(c, d)$  which are $n - 3$  in total. Besides these paths, we have three  paths between $(a, b)$ and $(c, d)$ as follows:
\[(a, b) \sim (d, b) \sim (a, a) \sim (b, d) \sim (a, d),\]
\[(a, b) \sim (d, d) \sim (b, b) \sim (a, d),\]
\[(a, b) \sim (a, d).\]
Thus, there are at least $n^2 - 2n$ internally disjoint paths between $(a, b)$ and $(c, d)$. On the other hand $c= b$. Now we have the two possibilities (i) $d = a$ (ii) $a, b, d$ are distinct. If $d = a$, then $A \cap B = \{(b, b), (a, a) \}$ so that $|T'| = n^2 - 4n + 4$. Consequently, we get $n^2 - 4n + 4$ internally disjoint paths between $(a, b)$ and $(c, d)$. In addition to that, for $x \in [n] \setminus \{a, b \}$,  we have
$ (a, b) \sim (x, b) \sim (x, a) \sim (b, a) \; \text{and}  \; (a, b) \sim (a, x) \sim (b, x) \sim (b, a)$
internally disjoint paths between $(a, b)$ and $(c, d)$  which are $2n -4$  in total. Thus, we get at least  $n^2 -2n$ internally disjoint paths between $(a, b)$ and $(c, d)$. For distinct $a, b$ and $d$, we get $A \cap B = \{(d, a), (b, b) \}$ so that $|T'| = n^2 - 4n + 4$. Consequently, we get  $n^2 - 4n + 4$ internally disjoint paths between $(a, b)$ and $(c, d)$. Additionally, for $x \in [n] \setminus \{a, b,  d \}$,  we have $2n - 6$ internally disjoint paths
\[(a, b) \sim (x, b) \sim (x, a) \sim (b, d),\]  \[(a, b) \sim (d, x) \sim (b, x) \sim (b, d)\]
 between $(a, b)$ and $(c, d)$. Besides these paths, we have two more paths
$(a, b) \sim (d, b) \sim (a, a) \sim (b, d)$ and $(a, b) \sim (d, d) \sim (b, a) \sim (b, d)$. Thus, there are at least $n^2 - 2n$ internally disjoint paths between $(a, b)$ and $(c, d)$.

\textit{Subcase 3.3:} $d \in \{a, b\}$. If $d = a$, then $A \cap B = \{(b, c), (a, a) \}$ so that $|T'| = n^2 - 4n + 4$. Consequently, we get $n^2 - 4n + 4$ internally disjoint paths between $(a, b)$ and $(c, d)$. In addition to that, for $x \in [n] \setminus \{a, b, c \}$,  we have
$ (a, b) \sim (a, x) \sim (b, x) \sim (c, a)$ and $ (a, b) \sim (x, c) \sim (x, a) \sim (c, a)$ internally disjoint paths between $(a, b)$ and $(c, d)$  which are $2n - 6$  in total. Moreover, we have two paths
$(a, b) \sim (a, c) \sim (b, b) \sim (c, a) \; \text{and} \; (a, b) \sim (c, c) \sim (b, a) \sim (c, a)$
between $(a, b)$ and $(c, d)$. Thus, there are at least $n^2 - 2n$ internally disjoint paths between $(a, b)$ and $(c, d)$. On the other hand, let $d = b$. Then $A \cap B = \{(b, x) \; : \; x \in [n] \}$ so that $|T'| = n^2 - 3n$. As a consequence, we get  $n^2 - 3n$ internally disjoint paths between $(a, b)$ and $(c, d)$. Furthermore, for $x \in [n] \setminus \{a,b,c \}$,  we have $n - 3$ internally disjoint paths
$(a, b) \sim (x, c) \sim (x, a) \sim (c, b)$
between $(a, b)$ and $(c, d)$. Besides these paths, we have three more  paths between $(a, b)$ and $(c, d)$ as follows:
\[(a, b) \sim (c, c) \sim (a, a) \sim (c, b),\]
\[(a, b) \sim (a, c) \sim (b, b) \sim (c, a) \sim (c, b),\]
\[(a, b) \sim (c, b).\]
Thus, there are at least $n^2 - 2n$ internally disjoint paths between $(a, b)$ and $(c, d)$.
\end{proof}

In view of Lemma \ref{nbd} and since $\kappa(\mathcal{G}) \le \kappa'(\mathcal{G}) \le \delta(\mathcal{G})$, we have the following corollary.
\begin{corollary} For $n \geq 3$, the edge connectivity of $\c$ is $n(n - 2)$.
\end{corollary}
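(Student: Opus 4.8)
The plan is to exploit the standard sandwich inequality $\kappa(\mathcal G)\le\kappa'(\mathcal G)\le\delta(\mathcal G)$ recorded in Section 2, together with the two endpoints having already been pinned down to the same value $n(n-2)$. Concretely, Theorem \ref{vertex-connectivity} gives $\kappa(\c)=n(n-2)$ for $n\ge 3$, and the corollary following Corollary \ref{degree of all B_n} (itself a consequence of Lemma \ref{nbd}) gives $\delta(\c)=n(n-2)$, since idempotent vertices have degree $(n-1)^2$, non-idempotent vertices have degree $n(n-2)$, and $n(n-2)\le(n-1)^2$.

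Thus I would simply chain the inequalities: $n(n-2)=\kappa(\c)\le\kappa'(\c)\le\delta(\c)=n(n-2)$, forcing $\kappa'(\c)=n(n-2)$. There is essentially no obstacle here — the only thing to verify is that the two bounds quoted from earlier results genuinely coincide, which is the elementary arithmetic observation $(n-1)^2=n^2-2n+1\ge n^2-2n=n(n-2)$, so that $\min\{(n-1)^2,\ n(n-2)\}=n(n-2)$ and the minimum degree is attained at the non-idempotent vertices. Everything else is quoting Theorem \ref{vertex-connectivity} verbatim.

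If one wanted a self-contained argument for the upper bound $\kappa'(\c)\le n(n-2)$ without invoking $\kappa'\le\delta$ directly, one could instead note that the set of $n(n-2)$ edges incident to a single non-idempotent vertex $(i,j)$ (with $i\ne j$) forms an edge cut isolating that vertex; but since the inequality $\kappa'(\mathcal G)\le\delta(\mathcal G)$ is already available in Section 2, this is unnecessary. So the write-up is one line, and the "hard part" — establishing the matching lower bound $\kappa'(\c)\ge n(n-2)$ — has in fact already been done, because it is subsumed by the lower bound $\kappa(\c)\ge n(n-2)$ proved via Menger's theorem in Theorem \ref{vertex-connectivity}.
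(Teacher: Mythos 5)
Your proposal is correct and is exactly the paper's argument: the corollary is derived from the sandwich inequality $\kappa(\mathcal G)\le\kappa'(\mathcal G)\le\delta(\mathcal G)$ together with $\kappa(\Delta(B_n))=n(n-2)$ from Theorem \ref{vertex-connectivity} and $\delta(\Delta(B_n))=n(n-2)$ from the degree computation. No gaps; the arithmetic check $(n-1)^2\ge n(n-2)$ is the only detail and you handle it.
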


\section{Algebraic properties of $\c$}

In order to study algebraic aspects of the graph $\c$, in this section we obtain automorphism group (see Theorem \ref{automorphism group}) and endomorphism monoid (see Theorem \ref{End}) of $\c$.

\subsection{Automorphism group of $\c$}
An \emph{automorphism} of a graph $\mathcal{G}$ is a permutation $f$ on  $V(\mathcal{G})$ with the property that, for any vertices $u$ and $v$, we have $uf \sim vf$ if and only if $u \sim v$. The set ${\rm Aut}(\mathcal{G})$ of all graph automorphisms of a graph $\mathcal{G}$ forms a group with respect to  composition of mappings. The symmetric group of degree $n$ is denoted by $S_n$. For $n = 1$, the group Aut$(\c)$ is trivial. For the remaining subsection, we assume $n \geq 2$.   

\begin{lemma}\label{idem mapsto idem}
Let $x \in V(\c)$ and $f \in$ {\rm Aut}$(\c)$. Then $x$ is an idempotent if and only if $xf$ is an idempotent.	
\end{lemma}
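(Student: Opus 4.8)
The plan is to characterize the idempotent vertices of $\c$ purely by a graph-theoretic invariant that is preserved by every automorphism, namely the vertex degree. By Corollary \ref{degree of all B_n}, every idempotent vertex of $\c$ has degree $(n-1)^2$, whereas every non-idempotent vertex has degree $n(n-2)$. Since $(n-1)^2 - n(n-2) = 1 \neq 0$, these two values are distinct, so the set $\mathcal E$ of idempotent vertices coincides \emph{exactly} with the set of vertices of $\c$ having degree $(n-1)^2$.

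Next I would invoke the defining property of a graph automorphism. If $f \in {\rm Aut}(\c)$, then $u \sim v$ if and only if $uf \sim vf$; consequently $f$ restricts to a bijection from $N(x)$ onto $N(xf)$ for every vertex $x$, and in particular $\deg(x) = \deg(xf)$. Combining this with the previous paragraph yields the chain: $x$ is an idempotent $\iff \deg(x) = (n-1)^2 \iff \deg(xf) = (n-1)^2 \iff xf$ is an idempotent, which is precisely the statement of the lemma.

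I do not expect any genuine obstacle here; once Corollary \ref{degree of all B_n} is available, the only point to verify is the numerical inequality $(n-1)^2 \neq n(n-2)$, which holds for all $n$ (in particular for all $n \ge 2$, where $\c$ is the relevant nontrivial graph). The lemma is a short bookkeeping step, and its role will be to let us subsequently decompose an arbitrary element of ${\rm Aut}(\c)$ into a permutation of the index set $[n]$ together with the transpose symmetry, en route to identifying ${\rm Aut}(\c) \cong S_n \times \mathbb{Z}_2$.
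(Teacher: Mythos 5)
Your proposal is correct and matches the paper's own proof: both arguments observe that automorphisms preserve degree and then invoke Corollary \ref{degree of all B_n}, noting that idempotents have degree $(n-1)^2$ while non-idempotents have degree $n(n-2)$, and these values differ. The only addition in your write-up is the explicit verification that $(n-1)^2 - n(n-2) = 1 \neq 0$, which the paper leaves implicit.
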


\begin{proof}
Since $f$ is an automorphism, we have deg$(x) =$deg$(xf)$. By Corollary \ref{degree of all B_n}, the result holds.
\end{proof}

\begin{lemma}\label{nonidempotent mapping}
For $f \in {\rm Aut}(\c)$ and $i, j, k, k' \in [n]$ such that $(i, i)f = (k, k)$ and $(j, j)f = (k', k')$, we have either $(i, j)f = (k, k')$ or $(i, j)f = (k', k)$.
\end{lemma}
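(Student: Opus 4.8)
The plan is to analyze the $\equiv$-neighbourhood structure around the vertex $(i,j)$ using Lemma \ref{nbd}, and to pin down $(i,j)f$ by showing that the only two vertices of $\c$ that have the correct adjacency relations to both $(k,k)$ and $(k',k')$ are $(k,k')$ and $(k',k)$. First I would record the relevant facts from Lemma \ref{nbd}: a non-idempotent vertex $(k,k')$ is adjacent to the idempotent $(k,k)$? No — by Remark \ref{r.non adjacent vertices}, $(k,k')$ and $(k,k)$ are \emph{not} adjacent (they share the first coordinate), and likewise $(k,k')$ and $(k',k')$ are not adjacent. So the correct characterisation to exploit is: among all vertices of $\c$, the ones non-adjacent to \emph{both} $(k,k)$ and $(k',k')$. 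By Lemma \ref{nbd}(i), the non-neighbours of $(k,k)$ are exactly the vertices having $k$ as a coordinate (other than $(k,k)$ itself), i.e. $\{(k,x),(x,k): x\neq k\}$; similarly the non-neighbours of $(k',k')$ are $\{(k',y),(y,k'):y\neq k'\}$. The intersection of these two sets is precisely $\{(k,k'),(k',k)\}$.

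Next I would transport this back through $f$. Since $f$ is an automorphism, $(i,j)f$ is non-adjacent to $(i,i)f=(k,k)$ iff $(i,j)$ is non-adjacent to $(i,i)$, which holds by Remark \ref{r.non adjacent vertices} since $i\neq j$ and $(i,j)$ shares coordinate $i$ with $(i,i)$. Likewise $(i,j)$ is non-adjacent to $(j,j)$, so $(i,j)f$ is non-adjacent to $(j,j)f=(k',k')$. Therefore $(i,j)f$ lies in the intersection computed above, namely $\{(k,k'),(k',k)\}$, which is exactly the conclusion. One small point to dispatch: the two hypotheses $(i,i)f=(k,k)$ and $(j,j)f=(k',k')$ together with $i\neq j$ force $k\neq k'$ (as $f$ is injective), so $\{(k,k'),(k',k)\}$ genuinely has two elements and both are legitimate non-idempotent vertices of $\c$; and Lemma \ref{idem mapsto idem} guarantees $(i,j)f$ is non-idempotent, consistent with landing in this set.

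I do not expect a genuine obstacle here; the argument is essentially a direct computation of non-neighbourhoods via Lemma \ref{nbd} and Remark \ref{r.non adjacent vertices}. The only thing requiring a little care is making sure the intersection of the two non-neighbour sets is computed correctly for all positions of $k,k'$ (in particular that no vertex of the form $(x,x)$ with $x\notin\{k,k'\}$ sneaks in — it does not, since such a vertex is adjacent to both $(k,k)$ and $(k',k')$), and handling the degenerate small cases $n=2,3$ is trivial since the same set-theoretic identity holds. For $n\ge 2$ the proof is uniform.
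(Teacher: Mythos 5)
Your argument is correct and is essentially the paper's own proof: both use that $f$ preserves non-adjacency, that the non-neighbours of an idempotent $(k,k)$ are exactly the vertices having $k$ as a coordinate (Lemma \ref{nbd}(i)), and that $(i,j)$ is non-adjacent to both $(i,i)$ and $(j,j)$, forcing $(i,j)f\in\{(k,k'),(k',k)\}$. Your explicit computation of the intersection of the two non-neighbour sets (together with $k\neq k'$ from injectivity) even makes the appeal to Lemma \ref{idem mapsto idem} unnecessary, but the substance is identical.
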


\begin{proof}
	For $i \ne j$, suppose that $(i, j)f = (x, y)$. Clearly, $(i, j) \nsim (i, i)$ so that
	$(x, y) = (i, j)f \nsim (i, i)f = (k, k)$.
	Since $(x, y) \nsim (k, k)$, we get either $x = k$ or $y = k$. Similarly, for $(i, j) \nsim (j, j)$, we have either $x = k'$ or $y = k'$.
	Thus, by Lemma \ref{idem mapsto idem}, we have $(x, y) = (k, k')$ or $(x, y) = (k', k)$.
\end{proof}

\begin{lemma}\label{phi-sigma}
	For $\sigma \in S_n$, let $\phi_{\sigma}:V(\c) \rightarrow V(\c)$ defined by $(i, j) \phi_{\sigma} = (i \sigma, j \sigma)$. Then $\phi_{\sigma} \in {\rm Aut}(\c)$.
\end{lemma}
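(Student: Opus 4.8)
The plan is to verify directly the two defining requirements of a graph automorphism: that $\phi_{\sigma}$ is a bijection of $V(\c)$ onto itself, and that it preserves adjacency in both directions. Since $V(\c)=[n]\times[n]$, the map $\phi_{\sigma}$ certainly sends $V(\c)$ into $V(\c)$; it is injective because $(i\sigma,j\sigma)=(k\sigma,l\sigma)$ forces $i\sigma=k\sigma$ and $j\sigma=l\sigma$, hence $i=k$ and $j=l$ as $\sigma$ is a permutation of $[n]$. An injective self-map of a finite set is a bijection, and its inverse is visibly $\phi_{\sigma^{-1}}$.

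For adjacency I would first extract from Lemma \ref{nbd} (together with Remark \ref{r.non adjacent vertices}) the following uniform criterion: two distinct vertices $(i,j)$ and $(k,l)$ of $\c$ are adjacent if and only if $k\ne j$ and $l\ne i$. This is obtained by the same short case analysis on the products $(i,j)(k,l)$ and $(k,l)(i,j)$ in $B_n$ that underlies Lemma \ref{nbd}, treating idempotent and non-idempotent vertices uniformly. Granting this, let $(i,j),(k,l)\in V(\c)$ be distinct. Then $(i\sigma,j\sigma)\ne(k\sigma,l\sigma)$, and since $\sigma$ is a bijection of $[n]$ we have $k\ne j\iff k\sigma\ne j\sigma$ and $l\ne i\iff l\sigma\ne i\sigma$. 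Applying the criterion twice gives that $(i,j)\sim(k,l)$ in $\c$ if and only if $(i,j)\phi_{\sigma}=(i\sigma,j\sigma)\sim(k\sigma,l\sigma)=(k,l)\phi_{\sigma}$ in $\c$. Hence $\phi_{\sigma}\in{\rm Aut}(\c)$.

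A slightly more structural alternative is to note that $\sigma$ induces a semigroup automorphism $\psi_{\sigma}$ of $B_n$ by $(i,j)\mapsto(i\sigma,j\sigma)$ and $0\mapsto 0$: it is a bijection and respects the multiplication of $B_n$, because $j=k$ holds if and only if $j\sigma=k\sigma$. A semigroup automorphism carries the centre onto the centre, so $\psi_{\sigma}$ restricts to a bijection of $V(\c)=B_n\setminus Z(B_n)$, and being a homomorphism with homomorphic inverse it preserves the relation $xy=yx$ in both directions; thus $\phi_{\sigma}=\psi_{\sigma}|_{V(\c)}$ lies in ${\rm Aut}(\c)$. Either way the argument is routine: the only point that asks for a little care is making the adjacency criterion explicit purely in terms of equalities and inequalities among the coordinates, so that its invariance under the coordinatewise action of $\sigma$ is manifest.
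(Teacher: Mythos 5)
Your proof is correct and follows essentially the same route as the paper: both arguments rest on the adjacency criterion that distinct vertices $(i,j)$ and $(k,l)$ are adjacent if and only if $k\ne j$ and $l\ne i$, together with the observation that these inequalities are preserved by applying $\sigma$ coordinatewise. The semigroup-automorphism alternative you sketch is a nice structural remark, but the core argument coincides with the paper's.
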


\begin{proof}
	It is easy to verify that $\phi_{\sigma}$ is a permutation on $V(\c)$. Now we show that $\phi_\sigma$ preserves adjacency. Let $(i, j), (x, y) \in V(\c)$  such that
	$(i, j) \sim (x, y)$. Now,
	\begin{align*}
	(i, j) \sim (x, y) & \Longleftrightarrow x \ne j \; \text{and} \; y \ne i \\
	& \Longleftrightarrow \; \text{for} \; \sigma \in S_n, \; \text{we have} \; x \sigma \ne j \sigma \; \text{and} \; y \sigma \ne i \sigma \\
	& \Longleftrightarrow (i \sigma, j \sigma) \sim (x \sigma, y\sigma) \\
	& \Longleftrightarrow (i, j)\phi_{\sigma} \sim (x, y) \phi_{\sigma}.
	\end{align*}
	Hence, $\phi_{\sigma} \in {\rm Aut}(\c)$.
\end{proof}

\begin{lemma}\label{alpha}
	Let $\alpha : V(\c) \rightarrow V(\c)$ be a mapping defined by $(i, j)\alpha = (j, i)$. Then $\alpha \in {\rm Aut}(\c)$.
\end{lemma}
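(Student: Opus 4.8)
The plan is to verify directly that the swap map $\alpha\colon (i,j)\mapsto (j,i)$ is a well-defined bijection on $V(\c)$ and that it preserves and reflects adjacency, exactly in the spirit of the proof of Lemma \ref{phi-sigma}. First I would check that $\alpha$ is a well-defined self-map of $V(\c)$: the non-central elements of $B_n$ are precisely the non-zero elements $(i,j)$ with $i,j\in[n]$ (the only central element is $0$), and this set is visibly closed under the transposition of coordinates, so $(i,j)\alpha=(j,i)$ again lies in $V(\c)$. Moreover $\alpha$ is its own inverse, since $(i,j)\alpha^2=(i,j)$, hence $\alpha$ is a permutation of $V(\c)$.

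Next I would show that $\alpha$ preserves adjacency in both directions. Using the characterization of non-adjacency extracted in Remark \ref{r.non adjacent vertices} together with Lemma \ref{nbd}, two distinct vertices $(i,j)$ and $(x,y)$ satisfy $(i,j)\sim(x,y)$ in $\c$ if and only if $x\neq j$ and $y\neq i$. (This is the clean criterion already used in Lemma \ref{phi-sigma}; note it also correctly handles the idempotent cases, since $(i,i)\sim(x,y)$ iff $x\neq i$ and $y\neq i$.) Applying $\alpha$, the pair becomes $(j,i)$ and $(y,x)$, and the same criterion gives $(j,i)\sim(y,x)$ if and only if $y\neq i$ and $x\neq j$, which is literally the same condition. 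Hence $(i,j)\sim(x,y)\iff (i,j)\alpha\sim(x,y)\alpha$, so $\alpha$ is an automorphism of $\c$.

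I do not expect a genuine obstacle here; the only point requiring a little care is that the adjacency criterion $x\neq j,\ y\neq i$ is symmetric in the two vertices and symmetric under simultaneously swapping the entries of both, which is precisely what makes $\alpha$ work. One should also remark that $\alpha\neq\phi_\sigma$ for any $\sigma$ in general (for instance $\alpha$ fixes every idempotent $(i,i)$ but swaps $(i,j)$ with $(j,i)$), so that $\alpha$ together with the $\phi_\sigma$'s generates a strictly larger subgroup of ${\rm Aut}(\c)$; this foreshadows the identification ${\rm Aut}(\c)\cong S_n\times\mathbb Z_2$ in Theorem \ref{automorphism group}, but is not needed for the present lemma.
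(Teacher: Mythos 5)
Your proposal is correct and follows essentially the same route as the paper: the paper likewise notes that $\alpha$ is a bijection and then uses the adjacency criterion $(i,j)\sim(x,y)\iff x\neq j$ and $y\neq i$, which is invariant under simultaneously swapping the coordinates of both vertices. The extra remarks about well-definedness and about $\alpha$ not being any $\phi_\sigma$ are fine but not needed for the lemma.
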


\begin{proof}
	It is straightforward to verify that $\alpha$ is a one-one and onto map on $V(\c)$. Note that
	\begin{align*}
	(i, j) \sim (x, y) & \Longleftrightarrow x \ne j \; \text{and} \; y \ne i \\
	& \Longleftrightarrow (j, i) \sim (y, x) \\
	& \Longleftrightarrow (i, j)\alpha \sim (x, y)\alpha.
	\end{align*}
	Hence, $\alpha \in {\rm Aut}(\c)$.
\end{proof}

\begin{remark}
For  $\phi_{\sigma}$ and $\alpha$, defined in Lemma \ref{phi-sigma} and \ref{alpha}, we have $\phi_{\sigma} \circ \alpha = \alpha \circ \phi_{\sigma}$.
\end{remark}

\begin{proposition}\label{aut-sigma or comp}
For each $f \in {\rm Aut}(\c)$, we have either $f = \phi_{\sigma}$ or $f = \phi_{\sigma} \circ \alpha$ for some $\sigma \in S_n$.
\end{proposition}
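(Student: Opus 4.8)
The plan is to show that an arbitrary automorphism $f$ is completely determined by its action on the idempotents, and that this action is induced by a permutation $\sigma \in S_n$, up to composition with $\alpha$.

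First I would analyze how $f$ acts on the set $\mathcal{E}$ of idempotent vertices. By Lemma \ref{idem mapsto idem}, $f$ restricts to a permutation of $\mathcal{E} = \{(1,1), \ldots, (n,n)\}$, so there is a permutation $\sigma \in S_n$ with $(i,i)f = (i\sigma, i\sigma)$ for all $i \in [n]$. Composing $f$ with $\phi_{\sigma^{-1}}$ (which is in ${\rm Aut}(\c)$ by Lemma \ref{phi-sigma}), we may assume without loss of generality that $f$ fixes every idempotent vertex; it then suffices to prove that such an $f$ equals either the identity or $\alpha$. The key step is Lemma \ref{nonidempotent mapping}: since $f$ fixes $(i,i)$ and $(j,j)$, for each non-idempotent vertex $(i,j)$ we have $(i,j)f \in \{(i,j), (j,i)\}$. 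So $f$ acts on each ``unordered pair'' block $\{(i,j),(j,i)\}$ either trivially or by swapping.

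The main obstacle — and the heart of the argument — is ruling out ``mixed'' behaviour: showing $f$ cannot swap some block $\{(i,j),(j,i)\}$ while fixing another block $\{(i,k),(k,i)\}$ or $\{(j,k),(k,j)\}$ sharing an index. I would argue this using adjacency relations among non-idempotent vertices, via Remark \ref{r.non adjacent vertices} (the vertices $(i,j)$ and $(k,i)$ are never adjacent) together with Lemma \ref{nbd}. Concretely, fix $i \ne j$ and suppose $(i,j)f = (j,i)$. Take any $k \ne i,j$ (possible since $n \geq 2$; for $n = 2$ the statement is checked directly, so assume $n \geq 3$). Consider the vertex $(i,k)$: we have $(i,j) \sim (i,k)$ by Lemma \ref{nbd}(ii) (first index agrees, $k \ne i,j$), whereas $(j,i)$ and $(i,k)$ are non-adjacent by Remark \ref{r.non adjacent vertices}. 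Since $f$ preserves adjacency and $(i,j)f = (j,i)$, the image $(i,k)f$ must be non-adjacent to $(j,i)$; as $(i,k)f \in \{(i,k),(k,i)\}$ and $(i,k)$ IS adjacent to $(j,i)$... wait, I need the correct non-adjacency, so I would instead track: $(i,k) \sim (i,j)$ forces $(i,k)f \sim (j,i)$, and checking which of $(i,k), (k,i)$ is adjacent to $(j,i)$ via Remark \ref{r.non adjacent vertices} pins down $(i,k)f = (k,i)$. Propagating this through all pairs shows that once one block is swapped, every block is swapped, i.e. $f = \alpha$; otherwise $f$ is the identity.

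Finally, reassembling: the original $f$ satisfies $f \circ \phi_{\sigma^{-1}} \in \{{\rm id}, \alpha\}$, hence $f = \phi_{\sigma}$ or $f = \alpha \circ \phi_\sigma$, and by the preceding remark $\alpha \circ \phi_\sigma = \phi_\sigma \circ \alpha$, giving $f = \phi_\sigma$ or $f = \phi_\sigma \circ \alpha$ as claimed. I expect the write-up to be short modulo the careful bookkeeping in the ``propagation'' step, which is where all the real content lies; everything else is direct application of the already-established lemmas.
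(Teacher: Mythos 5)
Your proposal is correct and follows essentially the same route as the paper: extract $\sigma$ from the action on idempotents, use Lemma \ref{nonidempotent mapping} to reduce each non-idempotent image to two candidates, and then propagate adjacency/non-adjacency constraints (via Lemma \ref{nbd} and Remark \ref{r.non adjacent vertices}) to rule out mixed behaviour. The only cosmetic difference is that you first normalize by composing with $\phi_{\sigma^{-1}}$ so as to reduce to the dichotomy $f \in \{\mathrm{id}, \alpha\}$, whereas the paper runs the same case analysis directly on $f$.
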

  \begin{proof}
Since $f \in {\rm Aut}(\c)$,  by Lemma \ref{idem mapsto idem}, note that there exists  a permutation $\sigma : [n] \rightarrow [n]$ such that $i\sigma = j$ $\iff (i,i)f = (j, j)$, determined by $f$. Thus, we have $(i, i)f = (i\sigma, i\sigma)$ for all $i \in [n]$. Let $j \ne i$. Then by Lemma \ref{nonidempotent mapping}, we get either $(i, j)f = (i\sigma, j\sigma)$ or $(i, j)f = (j\sigma, i\sigma)$.
\\
\textbf{Case 1:} $(i, j)f = (i\sigma, j\sigma)$. We show that for any $(k, l) \ne (i, j)$, where $k \ne l$, we have $(k, l)f = (k\sigma, l\sigma)$ so that $f = \phi_{\sigma}$. We have the following subcases:

 \textit{Subcase 1.1:} $k = i$. Clearly, $l \ne j$. Then $(i, j) \sim (k, l)$ so that $(i\sigma, j\sigma) = (i, j)f \sim (k, l)f$. We must have $(k, l)f = (k\sigma, l\sigma)$.
 
 \textit{Subcase 1.2:} $l = j$. Clearly, $k \ne i$. Then $(i, j) \sim (k, l)$ so that $(i\sigma, j\sigma) = (i, j)f \sim (k, l)f$. We must have $(k, l)f = (k\sigma, l\sigma)$.
 
 \textit{Subcase 1.3:} $l = i$. Note that $(i, j) \nsim  (k, l)$ so that $(i\sigma, j\sigma) = (i, j)f \nsim  (k, l)f$. We must have $(k, l)f = (k\sigma, l\sigma)$.
 
\textit{Subcase 1.4:} $k = j$. Note that $(i, j) \nsim  (k, l)$ so that $(i\sigma, j\sigma) = (i, j)f \nsim  (k, l)f$. We must have $(k, l)f = (k\sigma, l\sigma)$.

\textit{Subcase 1.5:} $k, l \in [n]\setminus \{i, j\}$. By \textit{Subcase 1.1}, we get $(i, l)f= (i\sigma, l\sigma)$. Thus, by \textit{Subcase 1.2} we get $(k, l)f = (k\sigma, l\sigma)$.
\\
\textbf{Case 2:} $(i, j)f = (j\sigma, i\sigma)$. Let, if possible, there exists $(k, l) \ne (i, j)$, where $k \ne l$, such that $(k, l)f = (k\sigma, l\sigma)$. Then by \textbf{Case 1}, we get $(i, j)f = (i\sigma, j\sigma)$. Consequently, $i =  j$; a contradiction. Thus, for any $(k, l) \ne (i, j)$, we have $(k, l)f = (l\sigma, k\sigma)$ so that $f = \phi_{\sigma} \circ \alpha$.
\end{proof}

\begin{theorem}\label{automorphism group}
	For $n \ge 2$, we have ${\rm Aut}(\c) \cong S_n \times \mathbb{Z}_2$. Moreover, $|{\rm Aut}(\c)| = 2(n!)$.
\end{theorem}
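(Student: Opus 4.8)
The plan is to write down the obvious candidate isomorphism and verify it using the results already in hand. Define
$\Psi : S_n \times \mathbb{Z}_2 \to {\rm Aut}(\c)$ by $\Psi(\sigma, \epsilon) = \phi_{\sigma} \circ \alpha^{\epsilon}$, where $\phi_{\sigma}$ and $\alpha$ are the automorphisms of Lemmas \ref{phi-sigma} and \ref{alpha} (these are genuinely automorphisms, so $\Psi$ lands in ${\rm Aut}(\c)$). The first step is to check $\Psi$ is a group homomorphism: since $\alpha$ commutes with every $\phi_{\sigma}$ by the Remark preceding Proposition \ref{aut-sigma or comp}, and since with the right-action convention $(i,j)\phi_{\sigma} = (i\sigma, j\sigma)$ one has $\phi_{\sigma} \circ \phi_{\tau} = \phi_{\sigma\tau}$, it follows that $\Psi(\sigma,\epsilon)\circ\Psi(\tau,\delta) = \phi_{\sigma}\phi_{\tau}\alpha^{\epsilon}\alpha^{\delta} = \phi_{\sigma\tau}\alpha^{\epsilon+\delta} = \Psi((\sigma,\epsilon)(\tau,\delta))$.

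Next I would establish surjectivity, which is immediate: Proposition \ref{aut-sigma or comp} says every $f \in {\rm Aut}(\c)$ is either $\phi_{\sigma} = \Psi(\sigma,0)$ or $\phi_{\sigma}\circ\alpha = \Psi(\sigma,1)$ for some $\sigma \in S_n$. For injectivity it suffices to show $\ker\Psi$ is trivial. If $\Psi(\sigma,\epsilon) = \mathrm{id}_{V(\c)}$, then $\phi_{\sigma} = \alpha^{\epsilon}$. When $\epsilon = 0$, $\phi_{\sigma} = \mathrm{id}$ forces $i\sigma = i$ for all $i \in [n]$, i.e. $\sigma = \mathrm{id}$. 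When $\epsilon = 1$, we would need $\phi_{\sigma} = \alpha$; but $\alpha$ fixes every idempotent vertex $(i,i)$, whereas $\phi_{\sigma}$ fixes every idempotent only if $\sigma = \mathrm{id}$, in which case $\phi_{\sigma} = \mathrm{id} \neq \alpha$ (indeed $(1,2)\alpha = (2,1) \neq (1,2)$ since $n \geq 2$) — a contradiction. Hence $\ker\Psi = \{(\mathrm{id},0)\}$ and $\Psi$ is an isomorphism, giving ${\rm Aut}(\c) \cong S_n \times \mathbb{Z}_2$ and $|{\rm Aut}(\c)| = 2(n!)$.

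I do not anticipate a genuine obstacle, since Lemmas \ref{phi-sigma}--\ref{alpha} and Proposition \ref{aut-sigma or comp} already carry the analytic weight; the proof is essentially bookkeeping. The two points that require a little care are keeping the left/right composition convention consistent when verifying that $\sigma \mapsto \phi_{\sigma}$ is a homomorphism (rather than an anti-homomorphism), and the verification that $\alpha$ is not equal to any $\phi_{\sigma}$ — this is what guarantees the two families $\{\phi_{\sigma}\}$ and $\{\phi_{\sigma}\circ\alpha\}$ are disjoint, so that the order is exactly $2(n!)$ and not $n!$.
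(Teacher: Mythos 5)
Your proposal is correct and follows essentially the same route as the paper: both rely on Proposition \ref{aut-sigma or comp} to see that every automorphism is $\phi_{\sigma}$ or $\phi_{\sigma}\circ\alpha$, and both use the commutation of $\alpha$ with the maps $\phi_{\sigma}$ to identify the group with $S_n\times\mathbb{Z}_2$. You merely spell out the homomorphism and injectivity checks that the paper leaves implicit, which is a harmless (indeed welcome) addition.
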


\begin{proof}
	In view of Lemmas \ref{phi-sigma}, \ref{alpha} and \ref{aut-sigma or comp}, note that the underlying set of the automorphism group of $\c$ is
	\[{\rm Aut}(\c) = \{\phi_{\sigma} \; : \; \sigma \in S_n \} \cup \{\phi_{\sigma} \circ \alpha \; : \; \sigma \in S_n \},\] where $S_n$ is a symmetric group of degree $n$. Note that the groups Aut$(\c)$ and $S_n \times \mathbb Z_2$ are isomorphic under the assignment $\phi_{\sigma} \mapsto (\sigma, \bar{0})$ and $\phi_{\sigma} \circ \alpha \mapsto (\sigma, \bar{1})$.  Since, all the elements in ${\rm Aut}(\c)$ are distinct, we have $|{\rm Aut}(\c)| = 2(n!)$.
\end{proof}

\subsection{Endomorphism monoid of $\c$}
A mapping $f$ from a graph $\mathcal G$ to $\mathcal G'$ is said to be a \emph{homomorphism} if $x \sim y$, then $xf \sim yf$  for all $x, y \in V(\mathcal G)$. If $\mathcal G' = \mathcal G$, then we say $f$ is an \emph{endomorphism}. Note that the set {\rm End}$(\mathcal G)$ of all endomorphisms on $\mathcal G$ forms a monoid with respect to the composition of mappings. First we obtain the endomorphism monoid of $\c$ for $n \in \{2, 3\}$. The following remark is useful in the sequel.

\begin{remark}\label{r.image-clique-maximum-size}
Let $f \in {\rm End}(\mathcal G)$ and  $K$ be a clique of maximum size in $\mathcal G$. Then $Kf$ is again a clique of maximum size.
\end{remark}

\begin{lemma}
\rm{End}$(\Delta(B_2)) = \{f: V(\Delta(B_2)) \rightarrow  V(\Delta(B_2)) \; : \; \mathcal Ef = \mathcal E \}$, where $\mathcal E = \{(1, 1), (2, 2)\}$.
\end{lemma}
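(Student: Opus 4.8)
The plan is to describe $\Delta(B_2)$ explicitly and then verify both inclusions by inspection. The vertex set is $\{(1,1),(1,2),(2,1),(2,2)\}$, and by Lemma \ref{nbd} (equivalently, Corollary \ref{degree of all B_n}) we have $N[(1,1)]=N[(2,2)]=\{(1,1),(2,2)\}$ while $(1,2)$ and $(2,1)$ are isolated vertices. Hence $\Delta(B_2)$ is the single edge $(1,1)\sim(2,2)$ together with the two isolated vertices $(1,2),(2,1)$; in particular $\omega(\Delta(B_2))=2$ and $\mathcal E=\{(1,1),(2,2)\}$ is the \emph{unique} clique of maximum size.

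For the inclusion $\subseteq$, I would take $f\in{\rm End}(\Delta(B_2))$ and apply Remark \ref{r.image-clique-maximum-size}: the image $\mathcal Ef$ is again a clique of maximum size of $\Delta(B_2)$, and since $\mathcal E$ is the only such clique we get $\mathcal Ef=\mathcal E$. (Alternatively, directly: $(1,1)\sim(2,2)$ forces $(1,1)f\sim(2,2)f$, and the only edge of the graph is $(1,1)$–$(2,2)$, so $\{(1,1)f,(2,2)f\}=\mathcal E$.)

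For the reverse inclusion, suppose $f\colon V(\Delta(B_2))\to V(\Delta(B_2))$ satisfies $\mathcal Ef=\mathcal E$. To check that $f$ is a homomorphism it suffices to preserve adjacency, and the only pair of adjacent vertices is $(1,1),(2,2)$, both in $\mathcal E$. Since $f$ maps the two-element set $\mathcal E$ onto itself, $(1,1)f$ and $(2,2)f$ are distinct and hence adjacent; all remaining pairs of vertices are non-adjacent and impose no condition. Therefore $f\in{\rm End}(\Delta(B_2))$, which gives the equality of the two sets.

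I do not expect any real obstacle: the only point worth flagging is the (routine) observation that an endomorphism of a graph is entirely unconstrained on isolated vertices, so the lone edge $(1,1)\sim(2,2)$ determines everything, and that the maximum-clique argument pins $\mathcal E$ down uniquely.
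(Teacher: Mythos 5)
Your proof is correct and follows essentially the same route as the paper, whose entire argument is the single observation that two vertices of $\Delta(B_2)$ are adjacent if and only if both lie in $\mathcal E$; your explicit description of the graph (one edge $(1,1)\sim(2,2)$ plus two isolated vertices) and the two-inclusion check are just a more detailed write-up of that same observation. The maximum-clique detour is unnecessary but harmless.
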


\begin{proof}
For $x, y \in V(\Delta(B_2))$, note that $x \sim y $ if and only if $x, y$ belongs to $\mathcal E$. Hence, we have the result.
\end{proof}

For $\sigma \in S_3$, we define the mappings $f^{\sigma}$ and $g^{\sigma}$ on $V(\Delta(B_3))$ by\\
$\bullet \; (i, i) \overset{ f^{\sigma}}{\longmapsto}(i\sigma, i\sigma), (1, 2)\overset{ f^{\sigma}}{\longmapsto} (1\sigma, 1\sigma), (1, 3)\overset{ f^{\sigma}}{\longmapsto} (3\sigma, 3\sigma), (2, 3)\overset{ f^{\sigma}}{\longmapsto} (2\sigma, 2\sigma), (2, 1)\overset{ f^{\sigma}}{\longmapsto} (1\sigma, 1\sigma), (3, 1)\overset{ f^{\sigma}}{\longmapsto} (3\sigma, 3\sigma)$, $(3, 2)\overset{ f^{\sigma}}{\longmapsto} (2\sigma, 2\sigma)$, and\\
$\bullet \; (i, i)\overset{ g^{\sigma}}{\longmapsto} (i\sigma, i\sigma), (1, 2)\overset{ g^{\sigma}}{\longmapsto}(2\sigma, 2\sigma), (3, 2)\overset{ g^{\sigma}}{\longmapsto} (3\sigma, 3\sigma), (3, 1)\overset{ g^{\sigma}}{\longmapsto}(1\sigma, 1\sigma), (2, 1)\overset{ g^{\sigma}}{\longmapsto} (2\sigma, 2\sigma), (2, 3) \overset{ g^{\sigma}}{\longmapsto}(3\sigma, 3\sigma)$, $(1, 3)\overset{ g^{\sigma}}{\longmapsto} (1\sigma, 1\sigma)$, respectively.\\
It is routine to verify that $f^{\sigma}, g^{\sigma} \in$ End$(\Delta(B_3))$.

\begin{lemma}
\rm{End}$(\Delta(B_3)) =$ \rm{Aut}$(\Delta(B_3)) \cup \{f^{\sigma} \; : \; \sigma \in S_3 \} \cup \{g^{\sigma} \; : \; \sigma \in S_3 \}$, where $f^{\sigma}$ and $g^{\sigma}$ are the endomorphisms on $V(\Delta(B_3))$ as defined above.
\end{lemma}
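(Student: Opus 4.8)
The plan is to show the reverse inclusion $\mathrm{End}(\Delta(B_3)) \subseteq \mathrm{Aut}(\Delta(B_3)) \cup \{f^\sigma : \sigma \in S_3\} \cup \{g^\sigma : \sigma \in S_3\}$; the forward inclusion is already covered by the remark preceding the statement that $f^\sigma, g^\sigma$ are endomorphisms, together with $\mathrm{Aut}(\Delta(B_3)) \subseteq \mathrm{End}(\Delta(B_3))$. So let $f \in \mathrm{End}(\Delta(B_3))$ be arbitrary. By Lemma~\ref{clique-4}, the unique clique of maximum size in $\Delta(B_3)$ is $\mathcal E = \{(1,1),(2,2),(3,3)\}$, and by Remark~\ref{r.image-clique-maximum-size}, $\mathcal E f$ is again a clique of maximum size; hence $\mathcal E f = \mathcal E$, so $f$ restricts to a permutation of $\mathcal E$. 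Thus there is a unique $\sigma \in S_3$ with $(i,i)f = (i\sigma, i\sigma)$ for all $i \in [3]$.

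Next I would pin down the images of the six non-idempotent vertices. Fix a non-idempotent vertex, say $(1,2)$. Since $(1,2) \nsim (1,1)$ and $(1,2) \nsim (2,2)$ (by Lemma~\ref{nbd} or Remark~\ref{r.non adjacent vertices}), and since a graph homomorphism need not reflect non-adjacency, I cannot argue as in Lemma~\ref{nonidempotent mapping} directly; instead I would use that in $\Delta(B_3)$ each non-idempotent vertex $(i,j)$ with $i\ne j$ is adjacent to \emph{exactly one} idempotent, namely $(k,k)$ with $k \notin \{i,j\}$ — this is immediate from Lemma~\ref{nbd}(ii) for $n=3$. Since $f$ preserves edges, $(i,j)f$ is adjacent to $(k,k)f = (k\sigma, k\sigma)$; but by Lemma~\ref{nbd}(i) the neighbours of the idempotent $(k\sigma,k\sigma)$ among the images so far considered force $(i,j)f$ to lie in the complete subgraph $\{(a,b) : a,b \ne k\sigma\}$. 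Combined with the fact (again from Lemma~\ref{nbd}) that a non-idempotent vertex $(i,j)$ is non-adjacent to all four of $(i,i),(j,j),(j,i)$ and to certain $(i,\cdot),(\cdot,j)$, I would show each non-idempotent vertex must be sent to an idempotent — indeed, if $(1,2)f$ were a non-idempotent vertex $(a,b)$ it would have to be non-adjacent to $(k\sigma,k\sigma)$'s complement in a way incompatible with being adjacent to it — so that $(1,2)f \in \{(1\sigma,1\sigma),(2\sigma,2\sigma)\}$, the two idempotents whose indices lie in $\{1,2\} = [3]\setminus\{k\}$. The same reasoning applies verbatim to each of $(2,1),(1,3),(3,1),(2,3),(3,2)$.

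So $f$ is determined by $\sigma$ together with a binary choice $(i,j)f \in \{(i\sigma,i\sigma),(j\sigma,j\sigma)\}$ for each of the six non-idempotent vertices, and it remains to see which of these $2^6 \cdot 6$ candidate maps actually preserve all edges. Here I would exploit the $6$-cycle structure of the non-idempotent vertices in $\Delta(B_3)$ visible in the Hamiltonian cycle of Theorem~\ref{Hamiltonian}(iii): the induced subgraph on $\{(1,2),(1,3),(2,3),(2,1),(3,1),(3,2)\}$ is exactly a $6$-cycle (each such vertex is adjacent to precisely two others, by Lemma~\ref{nbd}(ii)), and moreover each non-idempotent vertex is adjacent to exactly two idempotents. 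Tracking these adjacency constraints around the cycle — e.g. $(1,2) \sim (1,3)$ forces their images, one in $\{(1\sigma,1\sigma),(2\sigma,2\sigma)\}$ and the other in $\{(1\sigma,1\sigma),(3\sigma,3\sigma)\}$, to be adjacent idempotents, hence distinct — I expect the choices to propagate so that exactly two global patterns survive: the ``$f^\sigma$ pattern'' (where $(1,2),(2,1)\mapsto(1\sigma,1\sigma)$, $(1,3),(3,1)\mapsto(3\sigma,3\sigma)$, $(2,3),(3,2)\mapsto(2\sigma,2\sigma)$, collapsing to $\mathcal E$) and the ``$g^\sigma$ pattern'', together with the automorphisms $\phi_\tau$ and $\phi_\tau\circ\alpha$ in which no collapse occurs. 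The main obstacle is precisely this last step: organizing the case analysis around the $6$-cycle cleanly enough that it is clear the $12$ automorphisms plus $6 + 6$ collapsing maps are the only survivors, rather than drowning in the $2^6$ sign patterns; using the cyclic adjacency relation and the ``two idempotent neighbours'' constraint to chain the choices together is what keeps this manageable.
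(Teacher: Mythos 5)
Your opening step agrees with the paper: $\mathcal E=\{(1,1),(2,2),(3,3)\}$ is the unique maximum clique, so $\mathcal E f=\mathcal E$ and $f$ induces a permutation $\sigma\in S_3$ of the idempotents. After that, however, the proposal goes wrong. You claim that every non-idempotent vertex must be sent to an idempotent. That is false, and it would exclude the automorphisms themselves (e.g.\ the identity map) from being endomorphisms. The actual constraint from $(1,2)\sim(3,3)$ is only that $(1,2)f\in N[(3\sigma,3\sigma)]=\{(a,b):a,b\ne 3\sigma\}$, a set of four vertices containing the two non-idempotents $(1\sigma,2\sigma)$ and $(2\sigma,1\sigma)$ as well as the two idempotents $(1\sigma,1\sigma),(2\sigma,2\sigma)$; there is no ``incompatibility'' forcing the idempotent options, since both non-idempotent candidates are adjacent to $(3\sigma,3\sigma)$. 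Your own conclusion betrays the problem: after restricting each non-idempotent to a binary choice of idempotent images, you then re-admit the automorphisms ``in which no collapse occurs,'' which contradicts the restriction you just imposed. (A secondary slip: you assert each non-idempotent vertex is adjacent to exactly two idempotents; for $n=3$ it is adjacent to exactly one, as you correctly state earlier in the same paragraph.)

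The repair is the case split the paper uses. From the adjacency to the unique idempotent neighbour, $(i,j)f\in\{(i\sigma,i\sigma),(j\sigma,j\sigma),(i\sigma,j\sigma),(j\sigma,i\sigma)\}$. Case 1: some non-idempotent $(i,j)$ has idempotent image; then that image is $(i\sigma,i\sigma)$ or $(j\sigma,j\sigma)$, and propagating adjacency around the $6$-cycle of non-idempotents (which is the useful part of your plan) shows the whole map is $f^\sigma$ or $g^\sigma$. Case 2: every non-idempotent has non-idempotent image; then $(i,j)f\in\{(i\sigma,j\sigma),(j\sigma,i\sigma)\}$, and the argument of Proposition~\ref{aut-sigma or comp} shows $f=\phi_\sigma$ or $f=\phi_\sigma\circ\alpha$, i.e.\ $f\in{\rm Aut}(\Delta(B_3))$. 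One must also rule out (or absorb into Case 1) the mixed possibility that some non-idempotents collapse while others do not; your $2^6$ enumeration, as set up, cannot even express that possibility. As written, the proposal has a genuine gap at its central step.
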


\begin{proof}
Let $\psi \in$ End$(\Delta(B_3))$. By Figure \ref{B_3}, note that  $\{(1,1), (2, 2), (3, 3)\}$ is the only clique of maximum size in $\Delta(B_3)$. Since the image of a clique of maximum size under an endomorphism is again a clique of maximum size, we get $(i, i)\psi$ is an idempotent element for all $i \in \{1, 2, 3\}$. Also note that restriction of $\psi$ to $\mathcal E = \{(1, 1), (2, 2), (3, 3)\}$ is a bijective map from $\mathcal E$ to $\mathcal E$. If $(i, i)\psi = (j, j)$ for some $j \in \{1, 2, 3\}$, then define $\sigma : \{1, 2, 3\} \rightarrow \{1, 2, 3\}$ by $i\sigma = j$. Consequently, $\sigma \in S_3$. Suppose $(i,j)\psi$ is an idempotent element for some distinct $i, j \in \{1, 2, 3\}$. Without loss of generality, let $i = 1$ and $j = 2$. Since $(1, 2) \sim (3, 3)$ we have $(1,2)\psi \sim (3, 3)\psi = (3\sigma, 3\sigma)$. Consequently, $(1, 2)\psi \in \{(1\sigma, 1\sigma), (2\sigma, 2\sigma)\}$. If $(1, 2)\psi = (1\sigma, 1\sigma)$, then $\psi = f^{\sigma}$. Otherwise, $\psi = g^{\sigma}$. On the other hand, if $(i, j)\psi$ is a non-idempotent for all $i \neq j$. Let $(i, j)\psi = (x, y)$, where $x \ne y$. For $k \ne i, j$, we have $(x, y) = (i, j)\psi \sim(k, k)\psi$. Thus, $(i, j)\psi$ is either $(i\sigma, j\sigma)$ or  $(j\sigma, i\sigma)$. By the similar argument used in Proposition \ref{aut-sigma or comp}, we have $\psi \in $ Aut$(\Delta(B_3))$.
\end{proof}

Now, we obtain End$(\Delta(B_n))$ for $n \geq 4$. We begin with few definitions and necessary results. If $\mathcal G'$ is a subgraph of $\mathcal G$, then a homomorphism $f : \mathcal G \rightarrow \mathcal G'$ such that $xf = x$ for all $x \in \mathcal G'$ is called a \emph{retraction} of $\mathcal G$ onto $\mathcal G'$ and $\mathcal G'$ is said to be a \emph{retract} of $\mathcal G$. A subgraph $\mathcal G'$ of $\mathcal G$ is said to be a \emph{core} of $\mathcal G$ if and only if it admits no proper retracts (cf. \cite{a.1992Hell}). Let $X \subset A, \; Y \subseteq B$ and  $f$ be any mapping from the set $A$ to $B$ such that $Xf \subseteq Y$. We write the \emph{restriction map} of $f$ from $X$ to $Y$ as $f_{X \times Y}$ i.e $f_{X \times Y} : X \rightarrow Y$ such that $xf_{X \times Y} = xf$.

\begin{proposition}[{ \cite[Proposition 2.4]{a.core}}]\label{core}
A graph $\mathcal G$ is a core if and only if \rm{End}$(\mathcal G) =$ \rm{Aut}$(\mathcal G)$. 
\end{proposition}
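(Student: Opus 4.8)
The plan is to prove the two implications separately; the only hypothesis really used is that $\mathcal G$ is finite, so I would state the argument in that generality.

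For the direction ``$\mathrm{End}(\mathcal G)=\mathrm{Aut}(\mathcal G)$ $\Rightarrow$ $\mathcal G$ is a core'', I would take an arbitrary retraction $f\colon\mathcal G\to\mathcal G'$ onto a subgraph $\mathcal G'$ and regard it as a self-map of $\mathcal G$ (a homomorphism, since edges of $\mathcal G'$ are edges of $\mathcal G$). Then $f\in\mathrm{End}(\mathcal G)=\mathrm{Aut}(\mathcal G)$, so $f$ is surjective; hence $V(\mathcal G')=V(\mathcal G)$, and since by definition of a retraction $f$ fixes every vertex of $\mathcal G'$, we get $f=\mathrm{id}$ and $\mathcal G'=\mathcal G$. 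Thus $\mathcal G$ admits no proper retract, i.e. it is a core.

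For the converse I would start from $f\in\mathrm{End}(\mathcal G)$ and exploit that $\mathrm{End}(\mathcal G)$ is a finite monoid: choosing $1\le i<j$ with $f^i=f^j$, setting $d=j-i$ and letting $n$ be a multiple of $d$ with $n\ge i$, one gets $e:=f^n$ with $e^2=e$. The idempotent $e$ is then a retraction of $\mathcal G$ onto its image subgraph $\mathcal Ge$, the subgraph with vertex set $V(\mathcal G)e$ and edge set $\{(ue,ve):(u,v)\in E(\mathcal G)\}$: by construction $e$ is a surjective homomorphism onto $\mathcal Ge$, and each $x=ye\in\mathcal Ge$ satisfies $xe=ye^2=ye=x$. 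Since $\mathcal G$ is a core, $\mathcal Ge=\mathcal G$, so $e$ fixes all of $\mathcal G$, i.e. $e=\mathrm{id}$ and $f^n=\mathrm{id}$. Consequently $f^{n-1}\in\mathrm{End}(\mathcal G)$ is a two-sided inverse of $f$ in the monoid, so $f$ is a bijection whose inverse is again a homomorphism; that is, $f\in\mathrm{Aut}(\mathcal G)$, and hence $\mathrm{End}(\mathcal G)=\mathrm{Aut}(\mathcal G)$.

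The step that needs care is the bookkeeping in the converse: one must take the ``image'' of an endomorphism to be the subgraph whose edges are exactly the images of edges of $\mathcal G$, so that $e$ really is a surjective homomorphism onto it, and then verify that an idempotent endomorphism restricts to the identity on this image subgraph --- which is precisely what makes $e$ a retraction. Finiteness of $\mathcal G$ enters twice and cannot be dropped: it produces the idempotent power $e$, and it lets one conclude from ``$e$ is surjective on vertices'' that $e$ is bijective.
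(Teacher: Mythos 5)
The paper does not prove this statement at all --- it is quoted from \cite[Proposition 2.4]{a.core} and used as a black box --- so there is no in-paper argument to compare against. Your proof is the standard one for finite graphs and is correct: the forward direction correctly observes that a retraction is an endomorphism fixing its image, hence (being forced to be an automorphism) surjective and therefore the identity, which also forces $E(\mathcal G')=E(\mathcal G)$ since every edge of $\mathcal G$ must land in $\mathcal G'$; the converse correctly extracts an idempotent power $e=f^n$ from the finite monoid ${\rm End}(\mathcal G)$, checks that $e$ is a retraction onto its image subgraph, and uses the core hypothesis to conclude $e={\rm id}$, whence $f^{n-1}$ is an inverse homomorphism and $f\in{\rm Aut}(\mathcal G)$. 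Your closing remark about where finiteness enters is apt; the only cosmetic caveat is the degenerate case $n=1$, where $f$ is already the identity, so the claim about $f^{n-1}$ still holds trivially.
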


\begin{lemma}\label{maps-nonidempotent-B_4}
Let $f$  be a retraction of $\Delta(B_4)$. Then a non-idempotent element  maps to a non-idempotent element of $B_4$ under $f$.
\end{lemma}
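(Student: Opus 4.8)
The plan is to argue by contradiction, so suppose $f$ is a retraction of $\Delta(B_4)$ that sends some non-idempotent vertex to an idempotent. Recall that the idempotent vertices of $B_4$ appearing in $\Delta(B_4)$ are precisely those of $\mathcal E$. Since the maps $\phi_\sigma$ ($\sigma\in S_4$) from Lemma \ref{phi-sigma} act transitively on the non-idempotent vertices of $\Delta(B_4)$, after replacing $f$ by $\phi_{\sigma}^{-1}\circ f\circ\phi_{\sigma}$ for a suitable $\sigma$ — which is again a retraction, being an idempotent endomorphism — we may assume $(1,2)f=(a,a)$ for some $a\in[4]$.

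The first and main step is to deduce that \emph{every} non-idempotent vertex of $\Delta(B_4)$ is sent by $f$ into $\mathcal E$. The ingredients are Remark \ref{r.clique}, which lists the maximum cliques of $\Delta(B_4)$ as $\mathcal E$ together with the sets $A\times B$ for $A,B$ disjoint two-element subsets of $[4]$, and Remark \ref{r.image-clique-maximum-size}, by which $f$ carries each maximum clique onto a maximum clique (bijectively, since the sizes coincide). The maximum cliques of the form $A\times B$ through $(1,2)$ are $\{1,3\}\times\{2,4\}$ and $\{1,4\}\times\{2,3\}$; their $f$-images are maximum cliques containing the idempotent $(1,2)f=(a,a)$, hence each image is $\mathcal E$. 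This already forces $(1,3),(1,4),(3,2),(4,2),(3,4),(4,3)$ into $\mathcal E$. Running the same argument through the maximum cliques $A\times B$ that pass through $(3,4)$ and through $(4,3)$ captures the remaining vertices $(2,1),(2,3),(2,4),(3,1),(4,1)$, so all twelve non-idempotents map into $\mathcal E$.

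To finish, I would examine $\mathcal E f$, which by Remark \ref{r.image-clique-maximum-size} is a maximum clique, hence by Remark \ref{r.clique} equals either $\mathcal E$ or some $A'\times B'$ with $A',B'$ disjoint pairs. If $\mathcal E f=\mathcal E$, then $\operatorname{Im}(f)\subseteq\mathcal E$ is a complete graph on at most four vertices, so $f$ is a graph homomorphism of $\Delta(B_4)$ into $K_m$ with $m\le 4$, giving $\chi(\Delta(B_4))\le 4$; this contradicts $\chi(\Delta(B_4))=\lceil 16/3\rceil=6$ from Theorem \ref{chromatic number}. If $\mathcal E f=A'\times B'$, then $f$ restricted to $\mathcal E$ is a bijection onto $A'\times B'$, so every idempotent, in particular $(a,a)$, is sent to a non-idempotent; but $(a,a)=(1,2)f\in\operatorname{Im}(f)$ and a retraction fixes its image pointwise, so $(a,a)f=(a,a)$ would have to be non-idempotent — absurd. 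Both cases being impossible, no non-idempotent is sent to an idempotent, which is the assertion.

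The step I expect to take the most care is the propagation in the second paragraph: one must verify that iterating over the maximum cliques $A\times B$ incident to already-located vertices really reaches all twelve non-idempotents, and that each such image clique equals $\mathcal E$ (automatic once it is known to contain an idempotent, by Remark \ref{r.clique}). The remaining ingredients — transitivity of the $\phi_\sigma$-action, the clique classification, and the colouring bound $\chi(\Delta(B_4))=6$ — are quoted directly from earlier results.
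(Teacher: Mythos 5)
Your proof is correct, and its main step---propagating the images of the maximum cliques $\{1,3\}\times\{2,4\}$ and $\{1,4\}\times\{2,3\}$ through $(1,2)$, and then of the analogous cliques through $(3,4)$ and $(4,3)$, to force every non-idempotent into $\mathcal E$---is exactly the argument the paper gives. Only the concluding contradiction differs: the paper stays local, first showing that $(x,y)f$ must be $(x,x)$ or $(y,y)$ and then that the adjacent vertices $(2,3)$ and $(2,4)$ would both be sent to $(2,2)$; you instead split on $\mathcal Ef$ and, when $\mathcal Ef=\mathcal E$, invoke $\chi(\Delta(B_4))=6>4$ from Theorem~\ref{chromatic number} (no circularity, since that theorem precedes Section~4, though it is heavier machinery than the paper needs). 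Both endings are valid. One small observation: your second case, $\mathcal Ef=A'\times B'$, is in fact vacuous, because $K_1f=\mathcal E$ already places $\mathcal E$ inside the image of the retraction, which $f$ fixes pointwise, so $\mathcal Ef=\mathcal E$ is forced; the contradiction you derive there is nonetheless sound, so this is only a redundancy, not a gap.
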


\begin{proof}
Let, if possible there exists a non-idempotent element $(i, j)$ of $B_4$ such that $(i, j)f$ is an idempotent element. In order to get a contradiction, first we show that $(a, b)f   \in \mathcal E = \{(1, 1), (2, 2), (3, 3), (4, 4)\}$ for all $a \ne b \in \{1, 2, 3, 4\}$. Without loss of generality, we may assume that $i = 1$ and $j = 2$.  In view of Remark \ref{r.clique} , any clique $K $ in $\Delta(B_4)$ of maximum size is either $K = \mathcal E$ or $K = A \times B$, where $A$ and $B$ are disjoint subsets of $\{1, 2, 3, 4\}$ of size two. Therefore, $\Delta(B_4)$ has two cliques of maximum size which contains $(1, 2)$ viz. $K_1 = \{1,3\} \times \{2, 4\}$ and $K_2 = \{1,4\} \times \{2, 3\}$. Note that for disjoint subsets $A$ and $B$ of $\{1, 2, 3, 4\}$, the clique $A \times B$ does not contain an idempotent element. Since $(1, 2)f$ is an idempotent element and by Remark \ref{r.image-clique-maximum-size}, we have $K_1f = K_2f = \mathcal E$. By using the other elements of $\left(K_1f \cup K_2f\right) \setminus \{(1, 2)f\}$,  in a similar manner, one can observe that the image of remaining non-idempotent elements belongs to $\mathcal E$. Thus, $(a, b)f   \in \mathcal E$ for all $a \ne b \in [n]$. Now, we show that for any two distinct $x, y \in \{1, 2, 3, 4\}$, $(x, y)f$ is either $(x, x)$ or $(y, y)$. Since image of non-idempotent element is an idempotent so that $(x, y)f = (p, p)$ for some $p \in \{1, 2, 3, 4\}$. Note that $p \in \{x, y\}$. Otherwise, $(p, p) \sim (x, y)$ implies $(p, p)=(p, p)f \sim (x, y)f = (p, p)$; which is not possible. Now suppose $(1, 2)f = (1, 1)$. Since $(1, 2) \sim (1, k)$ for $k \ne 1, 2$, we get  $(1, 1) = (1, 2)f \sim (1, k)f$. Consequently, $(1, k)f = (k, k)$. Similarly, we get $(2, k)f = (2, 2)$. Therefore,  $(2, 3)f = (2, 4)f = (2, 2)$. We get a contradiction as $(2, 4) \sim (2, 3)$. Similarly, we get a contradiction when $(1, 2)f = (2, 2)$. Hence, the result hold.
\end{proof}

\begin{lemma}\label{maps-nonidempotent}
For $n \geq 5$, let $f \in$  {\rm End}$(\c)$. Then a non-idempotent element  maps to a non-idempotent element of $B_n$ under $f$.
\end{lemma}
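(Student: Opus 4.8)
The plan is to mimic the structure of the proof of Lemma \ref{maps-nonidempotent-B_4}, but to exploit the much stronger structural grip we have on cliques of maximum size when $n \geq 5$ (Remark \ref{non-idempotent_clique} and Remark \ref{non-idempotent_clique-II}): every maximum clique of $\c$ consists entirely of non-idempotent vertices, and every non-idempotent vertex lies in some maximum clique. I would argue by contradiction: suppose some non-idempotent $(i,j)$ (with $i \ne j$) satisfies $(i,j)f \in \mathcal E$, say $(i,j)f = (p,p)$. The first step is to propagate this: by Remark \ref{non-idempotent_clique-II} there is a maximum clique $K$ with $(i,j) \in K$, and by Remark \ref{r.image-clique-maximum-size} the image $Kf$ is again a maximum clique, hence by Remark \ref{non-idempotent_clique} $Kf$ contains \emph{no} idempotent — contradicting $(i,j)f = (p,p) \in Kf \cap \mathcal E$. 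This already finishes the argument, and in fact more cleanly than the $n=4$ case, because for $n \geq 5$ the maximum cliques are forced off the diagonal entirely.

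More carefully, I would write: let $(i,j)$ be any non-idempotent vertex. By Remark \ref{non-idempotent_clique-II} choose a clique $K$ of maximum size in $\c$ with $(i,j) \in K$. Since $f$ is an endomorphism, Remark \ref{r.image-clique-maximum-size} gives that $Kf$ is a clique of maximum size, so $(i,j)f \in Kf$. By Remark \ref{non-idempotent_clique}, every element of a maximum clique is non-idempotent; applying this to $Kf$ yields that $(i,j)f$ is non-idempotent. This is exactly the claim.

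The only thing to double-check is that the hypotheses of the cited remarks genuinely apply for all $n \geq 5$: Remark \ref{non-idempotent_clique} and Remark \ref{non-idempotent_clique-II} are stated for $n > 4$, which is precisely our range, and Remark \ref{r.image-clique-maximum-size} is general. There is no real obstacle here — the heavy lifting was already done in Theorem \ref{clique equality} and its corollaries, which pin down the maximum cliques as $A \times B$ with $A, B$ disjoint. The contrast with Lemma \ref{maps-nonidempotent-B_4} is worth a one-line remark: for $n = 4$ the clique $\mathcal E$ is also of maximum size, so one cannot immediately rule out an idempotent image from clique considerations alone and a separate adjacency-chasing argument is needed; for $n \geq 5$ that degeneracy disappears. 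I would therefore keep the proof to essentially the three sentences above, perhaps prefaced by "Let $(i,j)$, $i \ne j$, be a non-idempotent element of $B_n$."
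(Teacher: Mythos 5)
Your argument is exactly the paper's: it invokes Remark \ref{non-idempotent_clique-II} to place $(i,j)$ in a maximum clique $K$, Remark \ref{r.image-clique-maximum-size} to see that $Kf$ is again a maximum clique, and Remark \ref{non-idempotent_clique} to conclude every element of $Kf$, in particular $(i,j)f$, is non-idempotent. The proposal is correct and coincides with the paper's proof.
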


\begin{proof}
Let $(i, j)$ be a non-idempotent element of $B_n$. By Remark \ref{non-idempotent_clique-II}, there exists a clique $K$ of maximum size which contains $(i,j)$. In view of Remarks \ref{non-idempotent_clique} and \ref{r.image-clique-maximum-size}, all the elements of $Kf$ are non-idempotent. Thus, $(i, j)f$ is a non-idempotent element.
\end{proof}

\begin{proposition}\label{retract-idempotents}
For $n \geq 4$, let $\mathcal G'$ be a retract of $\c$ such that $(i, i) \in \mathcal G'$ for all $i \in [n]$. Then $\mathcal G' = \c$.
\end{proposition}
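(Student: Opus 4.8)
The plan is to show that a retract $\mathcal G'$ of $\c$ containing all idempotents must in fact contain every non-idempotent vertex, hence equal $\c$. Let $f$ be a retraction of $\c$ onto $\mathcal G'$. By Lemma \ref{maps-nonidempotent-B_4} (for $n=4$) and Lemma \ref{maps-nonidempotent} (for $n\geq 5$), $f$ sends non-idempotent vertices to non-idempotent vertices; and since $f$ fixes every element of $\mathcal G'$, in particular it fixes all idempotents $(i,i)$. So the only thing to rule out is that some non-idempotent vertex $(i,j)$, with $i\neq j$, fails to lie in $\mathcal G'$; equivalently, that $(i,j)f = (k,l)$ for some $(k,l)\neq(i,j)$ with $k\neq l$.

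First I would pin down the possibilities for $(i,j)f$ using the idempotents. Since $(i,j)\nsim(i,i)$ and $(i,j)\nsim(j,j)$, applying $f$ and using that $(i,i),(j,j)$ are fixed gives $(i,j)f\nsim(i,i)$ and $(i,j)f\nsim(j,j)$. By Remark \ref{r.non adjacent vertices} and Lemma \ref{nbd}, a non-idempotent vertex non-adjacent to both $(i,i)$ and $(j,j)$ must have both its coordinates in $\{i,j\}$; being non-idempotent, it is either $(i,j)$ or $(j,i)$. So $(i,j)f\in\{(i,j),(j,i)\}$ for every non-idempotent $(i,j)$. The claim is then that the case $(i,j)f=(j,i)$ cannot occur. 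Suppose it does for some fixed pair $i\neq j$. Pick $k\in[n]\setminus\{i,j\}$ (possible since $n\geq 4\geq 3$). Then $(i,j)\sim(i,k)$, so $(j,i)=(i,j)f\sim(i,k)f$; but $(i,k)f\in\{(i,k),(k,i)\}$, and $(j,i)\sim(k,i)$ is false by Remark \ref{r.non adjacent vertices} (they share the second coordinate wrongly — precisely, $(j,i)\nsim(k,i)$ since the first of one equals... ) so we are forced to $(i,k)f=(i,k)$. Symmetrically, from $(i,j)\sim(k,j)$ and $(j,i)\nsim(k,j)$-considerations one gets a constraint on $(k,j)f$. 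Propagating these forced values and then testing an adjacency such as $(i,k)\sim(l,j)$ for suitable $l$ against the image $(i,k)\sim(j,i)$-incompatibility should yield a contradiction, exactly as in the final paragraph of Lemma \ref{maps-nonidempotent-B_4}; in fact I expect the cleanest route is: once $(i,k)f=(i,k)$ for all $k\neq i,j$ and $(k,j)f=(k,j)$ for all $k\neq i,j$, look at a vertex $(k,l)$ with $k,l\notin\{i,j\}$ and $k\neq l$, use $(k,l)\sim(i,j)$ to get $(k,l)f\sim(j,i)$, forcing $(k,l)f=(k,l)$; then $(k,l)\sim(i,k)$... wait, rather: with $(i,j)f=(j,i)$ and almost everything else fixed, consider $(j,k)$ for $k\neq i,j$: we have $(j,k)\nsim(i,j)$ but $(j,k)f\in\{(j,k),(k,j)\}$ and both of $(j,k),(k,j)$ — at least one — is adjacent to $(j,i)$, contradicting $(j,k)f\nsim(j,i)$. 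This is where the argument closes.

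Having shown $(i,j)f=(i,j)$ for all $i\neq j$, together with $(i,i)f=(i,i)$, $f$ is the identity on $V(\c)$, so its image $\mathcal G'$ is all of $\c$ as a vertex set; since $\mathcal G'$ is an induced (or at least spanning) subgraph that is a retract, $\mathcal G'=\c$.

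The main obstacle is the bookkeeping in the second paragraph: ruling out the "flip" $(i,j)f=(j,i)$ uniformly. A cleaner organizing principle, which I would actually use to avoid case-chasing: show that the set $F=\{(i,j): i\neq j,\ (i,j)f=(i,j)\}$ is either all of the non-idempotent vertices or empty-after-further-analysis, by proving that if one non-idempotent vertex is fixed then all are (using the adjacency-propagation of Subcases 1.1–1.5 of Proposition \ref{aut-sigma or comp}, now with $\sigma=\mathrm{id}$), and separately that if one is flipped then a contradiction arises within a single $3$-element subset $\{i,j,k\}$ of $[n]$ just as in Lemma \ref{maps-nonidempotent-B_4}. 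Since $n\geq 4$ guarantees enough room for all the auxiliary indices, these two facts finish the proof. The hard part is purely combinatorial verification of the flip-contradiction; everything else is direct from the cited lemmas.
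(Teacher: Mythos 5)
Your overall strategy is the same as the paper's: show $f$ fixes the idempotents, pin $(i,j)f$ down to $\{(i,j),(j,i)\}$, rule out the flip, and conclude $f$ is the identity. But there is a genuine logical gap running through your argument: a retraction is only a homomorphism, so it preserves adjacency but \emph{not} non-adjacency. The implication $u \nsim v \Rightarrow uf \nsim vf$, which you use to get $(i,j)f \nsim (i,i)$ and $(i,j)f \nsim (j,j)$, and again at the very end to get $(j,k)f \nsim (j,i)$, is valid for automorphisms (this is exactly the mechanism of Lemma \ref{nonidempotent mapping}) but fails for endomorphisms. The first occurrence is repairable, and the paper repairs it by using positive adjacencies instead: $(i,j) \sim (k,k)$ for every $k \notin \{i,j\}$, and each such $(k,k)$ is fixed, so both coordinates of the non-idempotent image $(i,j)f$ must avoid every $k \notin \{i,j\}$; hence $(i,j)f \in \{(i,j),(j,i)\}$.

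The flip-elimination is where the proposal actually breaks down. Besides resting on the invalid non-adjacency step, it misreads the adjacency relation: $(j,i)$ and $(k,i)$ share their second coordinate and \emph{are} adjacent (Remark \ref{r.non adjacent vertices} concerns pairs of the form $(a,b)$, $(c,a)$), whereas $(j,i) \nsim (i,k)$. So the adjacency $(i,j) \sim (i,k)$ together with $(i,j)f = (j,i)$ forces $(i,k)f = (k,i)$, the opposite of what you claim; and in your final step only one of $(j,k)$, $(k,j)$ is adjacent to $(j,i)$, so no contradiction arises even if one grants the faulty premise. The correct closing argument (the paper's) uses only positive adjacencies: if $(i,j)f = (j,i)$, then $(j,i) \in \mathcal G'$ is fixed; for $p \notin \{i,j\}$ the adjacency $(j,p) \sim (j,i)$ forces $(j,p)f = (j,p)$, and then the two adjacencies $(i,p) \sim (i,j)$ and $(i,p) \sim (j,p)$ rule out $(i,p)f = (i,p)$ and $(i,p)f = (p,i)$ respectively, leaving no admissible value for $(i,p)f$ --- the desired contradiction. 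With these two repairs your outline becomes the paper's proof.
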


\begin{proof}
Since $\mathcal G'$ is a retract of $\c$, there exists a homomorphism $f : \c \rightarrow \mathcal G'$ such that $xf = x$ for all $x \in V(\mathcal G')$. Let $(i, j)$ be a non-idempotent element of $B_n$. Then $(i, j)f$ is a non-idempotent element of $B_n$ (cf. Lemmas \ref{maps-nonidempotent-B_4} and \ref{maps-nonidempotent}). Let $(i, j)f = (x, y)$, where $x \neq y$. For $k \in [n] \setminus \{i, j\}$, we have $(i, j) \sim (k, k)$. Since $(k, k) \in \mathcal G'$, we get $(x, y) \in N [(k,k)]$. By Lemma \ref{nbd}(i), $x, y \neq k$.	
Consequently, $(x, y) \in \{ (i, j), (j, i)\}$. Thus, either $(i, j)f = (i, j)$ or $(j, i)$. Now to prove $\mathcal G' = \c$, we show that $f$ is an identity map. Since $(i, i) \in \mathcal G'$, it is sufficient to prove that for any $i, j \in [n]$ such that $i \neq j$, we have $(i,j)f = (i, j)$. Let if possible, $(i, j)f = (j, i)$ for some $i \neq j$. Then $(j, i)f = (j, i)$. For $p \in [n] \setminus \{i, j\}$, note that $(j, p)f = (j, p)$ because if $(j, p)f = (p, j)$, then $(j, p) \sim (j, i)$ implies $(j, p)f = (p, j) \nsim (j, i) = (j, i)f$; a contradiction. Further, note that $(i, p)f \notin \{(i, p), (p, i)\}$ which is not possible. For instance, if $(i, p)f = (i, p)$ then $(i, p) \sim (i, j)$ gives $(i, p)f \sim (i, j)f$. Consequently, we get $(i,p) \sim (j, i)$; a contradiction. On the other hand, if $(i, p)f = (p, i)f$ then $(i, p) \sim (j, p)$ gives $(i, p)f = (p, i) \nsim (j, p) = (j, p)f$; a contradiction. Hence, $f$ is an identity map so that $\mathcal G' = \c$.      
\end{proof}

To obtain the {\rm End$(\c)$}, following Lemmas will be useful.

\begin{lemma}\label{retraction-maximum-size-clique}
For $n \geq 4$, let $f$ be a retraction of $\c$ onto $\mathcal G'$. Then there exists a clique $K$ of maximum size in $\mathcal G'$  such that $K =  A \times B$ where $A$ and $B$ forms a partition of  $[n]$. Moreover,
\begin{enumerate}[\rm (i)]
\item if $n$ is even then $|A| = |B| = \frac{n}{2}$, or
\item if $n$ is odd then either $|A| =\frac{n - 1}{2}, |B| = \frac{n + 1}{2}$ or  $|A| = \frac{n + 1}{2}, |B|= \frac{n - 1}{2}$.
\end{enumerate}
\end{lemma}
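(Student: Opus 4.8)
The plan is to start from a retraction $f:\c\to\mathcal G'$ and locate a maximum-size clique $K'$ of $\c$ that is actually contained in $\mathcal G'$; then transfer the structure results of Lemma~\ref{clique inequality}, Theorem~\ref{clique equality}, and Remarks~\ref{non-idempotent_clique}--\ref{non-idempotent_clique-II} (for $n\ge 5$) and Remark~\ref{r.clique} (for $n=4$) to show that such a $K'$ must have the desired product form $A\times B$ with $A\sqcup B=[n]$. The natural starting point is to pick any maximum-size clique $K$ of $\c$; by Remark~\ref{r.image-clique-maximum-size}, $Kf$ is again a maximum-size clique, and since $f$ fixes $\mathcal G'$ pointwise, $Kf\subseteq\mathcal G'$, so we may take $K':=Kf$. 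Thus a maximum-size clique of $\c$ lying inside $\mathcal G'$ always exists; the remaining task is purely about the shape of maximum-size cliques of $\c$.

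Next I would show every maximum-size clique of $\c$ has the form $A\times B$ with $A$ and $B$ partitioning $[n]$. For $n\ge 5$: by Remark~\ref{non-idempotent_clique} a maximum clique $K'$ contains no idempotent, and then the argument in the proof of Lemma~\ref{clique inequality} applies verbatim — setting $A=\{a:(a,y)\in K'\text{ for some }y\}$ and $B=\{b:(x,b)\in K'\text{ for some }x\}$, one gets $A\cap B=\varnothing$, $K'\subseteq A\times B$, and by maximality $K'=A\times B$ with $|A||B|$ maximal, forcing $|A|+|B|=n$; since $A\cap B=\varnothing$ and $|A|+|B|=n=|[n]|$, in fact $A\sqcup B=[n]$, and the sizes are exactly $\lfloor n/2\rfloor$ and $\lceil n/2\rceil$ (Lemma~\ref{clique inequality}), giving (i) and (ii). For $n=4$: by Remark~\ref{r.clique} either $K'=\mathcal E$ or $K'=A\times B$ with $|A|=|B|=2$ and $A,B$ disjoint subsets of $\{1,2,3,4\}$; in the latter case $A\sqcup B=[4]$ and we are done, so one must rule out (or rather, replace) $K'=\mathcal E$. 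This is handled by choosing $K$ in the first paragraph to be a maximum-size clique of the form $A_0\times B_0$ (which exists for $n=4$ by Corollary~\ref{clique-non-idempotent}) rather than $\mathcal E$; then by Lemma~\ref{maps-nonidempotent-B_4} the image $Kf$ consists of non-idempotent vertices, so $Kf\ne\mathcal E$, and Remark~\ref{r.clique} forces $Kf=A\times B$ with $A\sqcup B=[4]$.

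The main obstacle, and the only genuinely delicate point, is this last step for $n=4$: one cannot start from an arbitrary maximum clique, because $\mathcal E$ is itself maximum-size and could a priori be the only maximum clique sitting in $\mathcal G'$; the fix is exactly to feed a non-idempotent maximum clique into $f$ and invoke Lemma~\ref{maps-nonidempotent-B_4} to guarantee the image avoids $\mathcal E$. For $n\ge 5$ there is no such issue since Remark~\ref{non-idempotent_clique} already says \emph{every} maximum clique is idempotent-free. Finally I would assemble: take $K$ maximum-size and idempotent-free (Corollary~\ref{clique-non-idempotent} for $n\ge 4$), set $K=Kf\subseteq\mathcal G'$ (Remarks~\ref{r.image-clique-maximum-size} and, for the idempotent-free property of the image, Lemma~\ref{maps-nonidempotent-B_4} or \ref{maps-nonidempotent}), run the $A,B$ construction to conclude $K=A\times B$ with $A\sqcup B=[n]$, and read off the sizes from Lemma~\ref{clique inequality}. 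I expect the write-up to be short once the choice of starting clique is made correctly.
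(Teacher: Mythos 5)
Your proposal is correct and follows essentially the same route as the paper: start from an idempotent-free maximum clique (Corollary \ref{clique-non-idempotent}), push it through $f$ into $\mathcal G'$ using Remark \ref{r.image-clique-maximum-size} together with Lemma \ref{maps-nonidempotent-B_4} or \ref{maps-nonidempotent}, and then read off the product structure $A\times B$ with $A\sqcup B=[n]$ from the proof of Lemma \ref{clique inequality}. The care you take with the $n=4$ case (feeding a non-idempotent maximum clique into $f$ so the image cannot be $\mathcal E$) is exactly the choice the paper makes for all $n\ge 4$.
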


\begin{proof}
Let $f$ be a retraction on $\c$. For $n \geq 4$, in view of Corollary \ref{clique-non-idempotent}, Lemma \ref{clique-4} and Theorem \ref{clique equality}, $\c$ contains  a clique $K'$ of maximum size such that all the elements of $K'$ are non-idempotent.  By  Remark \ref{r.image-clique-maximum-size} and Lemmas  \ref{maps-nonidempotent-B_4}, \ref{maps-nonidempotent}, $K'f$ is a clique of maximum size and all of its elements are non-idempotents. Now consider $K'f= K$, by the proof of Lemma \ref{clique inequality}, we get $K = A \times B$ where $A$ and  $B$ forms a partition of $[n]$ together with (i) or (ii).
\end{proof}

%

In the following lemma, we provide the possible images of non-idempotent elements of $B_n$ under a retraction.

\begin{lemma}\label{retraction-image-nonidempotent}
Let $f$ be a retraction of $\c$ onto $\mathcal G'$, where $n\geq 4$. Then for $p \ne q \in [n]$, we have \[(p, q)f \in \{(t, p) : t \in A \} \cup \{(q, t) : t \in B \} \cup \{(p, q)\},\] for some partition $\{A, B\}$ of $[n]$. Moreover, 
\begin{enumerate}[\rm (i)]
\item if $p \in A$, then $(p, q)f \ne (t, p)$ for any $t \in A$.
\item if $q \in B$, then $(p, q)f \ne (q, t)$ for any $t \in B$.
\end{enumerate}
\end{lemma}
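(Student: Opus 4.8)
The plan is to combine the structural results already established: a retraction $f$ of $\c$ onto $\mathcal G'$ fixes every vertex of $\mathcal G'$, and by Lemma \ref{retraction-maximum-size-clique} there is a clique $K = A\times B$ of maximum size in $\mathcal G'$ with $\{A,B\}$ a partition of $[n]$ of the sizes described there; being in $\mathcal G'$, each element of $K$ is fixed by $f$. First I would record that, by Lemmas \ref{maps-nonidempotent-B_4} and \ref{maps-nonidempotent} (applied according to whether $n=4$ or $n\ge 5$), the image $(p,q)f$ of a non-idempotent element is again non-idempotent, say $(p,q)f = (x,y)$ with $x\ne y$. The strategy is then to pin down $x$ and $y$ by testing non-adjacency against the fixed vertices of $K$, using Remark \ref{r.non adjacent vertices} and Lemma \ref{nbd}.

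The key step: take any $a\in A$ and any $b\in B$ with $a\ne p$, $b\ne q$ (these exist unless $p$ is the ``only'' relevant element of $A$ or $q$ of $B$, cases I will treat separately). Consider the fixed vertices $(a,b)$ and — since $A\times B$ is a clique and elements are reachable — more usefully the vertices $(p', q')\in K$. I would argue as follows. If $p\in A$, pick $b\in B$; then $(p,q)$ and $(p,b)\in K$ are both of the form $(p,\cdot)$, and whether or not they are adjacent, applying $f$ gives constraints forcing a coordinate of $(x,y)$ to equal $p$ or to equal $b$; ranging over all $b\in B$ and using that $|B|\ge 2$ rules out $x=b$, so $x = p$, i.e. $(p,q)f = (p, y)$. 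Dually, if $q\in B$, I get $(p,q)f = (x, q)$. When $p\notin A$ (so $p\in B$) I instead use non-adjacency of $(p,q)$ with vertices $(t,p)\in K$ for $t\in A$ (Remark \ref{r.non adjacent vertices}), which forces $(x,y)\nsim (t,p)$ for all such $t$; since $(t,p)$ is fixed, Remark \ref{r.non adjacent vertices} gives $x = t$ or $y = p$ for each $t\in A$, and with $|A|\ge 2$ this forces $y = p$, so $(p,q)f = (x,p)$ with $x\in A$ (as $x$ must avoid being a second coordinate conflicting with the clique). Running the symmetric argument with $q$ versus $B$, I collect the three alternatives $(t,p)$ with $t\in A$, $(q,t)$ with $t\in B$, and $(p,q)$ itself, which is exactly the stated containment. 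The ``moreover'' clauses (i) and (ii) then drop out of the case analysis: if $p\in A$ we showed $(p,q)f$ has first coordinate $p$, hence it is not of the form $(t,p)$ with $t\in A$ unless $t=p$, which is excluded since $(p,p)$ is idempotent; similarly for (ii).

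The main obstacle I anticipate is the bookkeeping of the small/degenerate cases — when one of $p,q$ together with the partition leaves too few elements of $A$ or $B$ to run the counting argument (for instance $A = \{p\}$ when $n$ is odd and $|A| = \tfrac{n-1}{2} = 1$, i.e. $n=3$, which is outside our range $n\ge 4$; but borderline configurations like $|A|=2$ still need care since ``rule out $x=b$ using $|B|\ge 2$'' needs at least two usable choices). I would handle these by noting $n\ge 4$ forces $\min(|A|,|B|)\ge 2$, and by treating the sub-cases $p\in A$ vs. $p\in B$ and $q\in A$ vs. $q\in B$ separately, in each case exhibiting two fixed clique-vertices that sandwich $(x,y)$'s coordinates. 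A secondary subtlety is ensuring that $(x,y)$ cannot acquire a coordinate lying ``inside'' the clique in a way that would create an idempotent or violate the clique structure; this is resolved by invoking Remark \ref{non-idempotent_clique} / the fact that $K = A\times B$ has $A\cap B = \varnothing$, so a non-idempotent image with both coordinates constrained to the partition sides behaves consistently.
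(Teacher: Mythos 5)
There is a genuine gap here, both in method and in substance. On method: a retraction is a graph homomorphism, so it preserves adjacency but \emph{not} non-adjacency. Your step ``I use non-adjacency of $(p,q)$ with the vertices $(t,p)\in K$, which forces $(x,y)\nsim(t,p)$'' is therefore invalid: from $(p,q)\nsim(t,p)$ nothing at all follows about the images. Every non-adjacency contradiction in a correct argument must run the other way: exhibit a fixed vertex $v\in K$ with $(p,q)\sim v$, conclude $(p,q)f\sim vf=v$, and contradict Remark \ref{r.non adjacent vertices} if the assumed form of $(p,q)f$ makes that adjacency impossible. Relatedly, your phrase ``whether or not they are adjacent, applying $f$ gives constraints forcing a coordinate of $(x,y)$ to equal $p$ or to equal $b$'' has the logic backwards: for $b\in B\setminus\{q\}$ the vertices $(p,q)$ and $(p,b)$ \emph{are} adjacent, and preservation of adjacency yields only the non-equalities $y\ne p$ and $x\ne b$, never an equality.

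On substance: the conclusion you extract --- that $p\in A$ forces $(p,q)f=(p,y)$ and dually $q\in B$ forces $(p,q)f=(x,q)$ --- is false, and combined with the stated containment it would give $(p,q)f=(p,q)$ for every non-idempotent with $p\in A$ or $q\in B$, i.e.\ it would essentially settle ${\rm End}={\rm Aut}$ already at this point. The lemma deliberately allows, for $p,q\in A$, the value $(p,q)f=(q,t)$ with $t\in B$, whose first coordinate is $q\ne p$; the subsequent analysis (Claim \ref{c.i-in-X}(ii)) shows such images genuinely must be confronted and are only excluded at the very end of Section 4. What your sketch is missing is the single observation that drives the containment: $(x,y):=(p,q)f$ lies in $\mathcal G'$ and is therefore \emph{fixed} by $f$, so $(p,q)$ cannot be adjacent to $(x,y)$, since otherwise $(x,y)=(p,q)f\sim(x,y)f=(x,y)$. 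Combined with the adjacency criterion this rules out $x=p$, rules out $y=q$, and rules out $x,y\notin\{p,q\}$, leaving exactly $x=q$ or $y=p$ (or the trivial case $(x,y)=(p,q)$); the clique $K=A\times B$ of Lemma \ref{retraction-maximum-size-clique} then pins $t$ to the correct side of the partition, and the ``moreover'' clauses follow by testing adjacency against the fixed vertices $(p,r)$ with $r\in B\setminus\{q\}$, respectively $(r,q)$ with $r\in A\setminus\{p\}$ (these exist since $n\ge4$ gives $\min(|A|,|B|)\ge2$, as you correctly note).
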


\begin{proof}
In view of Lemma \ref{retraction-maximum-size-clique}, there exists a clique $K = A \times B$ of maximum size in $\mathcal G'$ for some partition $\{A, B\}$ of $[n]$. Suppose $(p, q)f = (x, y)$. Then, by Lemmas \ref{maps-nonidempotent-B_4} and \ref{maps-nonidempotent}, we have $x \ne y$. If $(p, q)f = (p, q)$ then there is nothing to prove. Now let $(p, q)f = (x, y)$ where $(x, y) \ne (p, q)$. If $x, y \notin \{p, q\}$, then $(p, q) \sim (x, y)$ gives $(p, q)f = (x, y)f = (x, y)$; a contradiction. Then either $x \in \{p, q\}$ or $y \in \{p, q\}$. If $x = p$, then clearly $y \notin \{p, q\}$. Consequently, $(p, q)  \sim (x, y)$ provides again a contradiction. Therefore, $x \ne p$. Similarly, one can  show that $y \ne q$. It follows that $(p, q)f = (x, y)$ where either $x = q$ or $y = p$. Now observe that if $y = p$, then $x \in A$. If possible, let $x \in B$. Then  for $\alpha \in A \setminus \{q\}$,  $(\alpha, x)f = (\alpha, x)$ as $(\alpha, x) \in A \times B \subseteq \mathcal G'$. Since $x \ne p$ as $ x \ne y$, we get $(p, q) \sim (\alpha, x)$ so that $(p, q)f = (x, p) \sim (\alpha, x)  = (\alpha, x)f$; a contradiction of Remark \ref{r.non adjacent vertices}. In a similar manner it is not difficult to observe if $x = q$, then $y \in B$.

 To prove addition part of the lemma, suppose $p \in A$ and $(p, q)f = (t, p)$ for some $t \in A$. For $r \in B$ such that $r \ne q$, we have $(p, q) \sim (p, r)$ and $(p, r)f = (p, r)$ as $(p, r) \in K \subseteq \mathcal G'$. Consequently, we get $(p, q)f = (t, p) \sim (p, r) = (p, r)f$; a contradiction of Remark \ref{r.non adjacent vertices}. Thus, $(p, q)f \ne (t, p)$. Using similar argument, observe that for $q \in B$, $(p, q)f \ne (q, t)$ for any $t \in B$. Thus, the result hold. 
\end{proof}

\begin{theorem}
For $n = 4$, we have \rm{End}$(\c)$ = \rm{Aut}$(\c)$.
\end{theorem}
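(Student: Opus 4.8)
The plan is to show that $\c$ (with $n=4$) is a core; then by Proposition \ref{core}, $\mathrm{End}(\c)=\mathrm{Aut}(\c)$. Concretely, I would take an arbitrary $f\in\mathrm{End}(\Delta(B_4))$ and prove that $f$ is bijective, which forces $f\in\mathrm{Aut}(\Delta(B_4))$ (a bijective endomorphism of a finite graph that preserves non-adjacency — here automatic, since any injective endomorphism of a finite graph onto itself is an automorphism once we also know edges are reflected, which follows from counting edges). The cleanest route: first establish that $f$ restricted to a suitable maximum clique is injective, then bootstrap injectivity to all of $V(\Delta(B_4))$ via the neighbourhood descriptions in Lemma \ref{nbd}.

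The key steps, in order. First, by Lemma \ref{maps-nonidempotent-B_4}, any retraction sends non-idempotents to non-idempotents; but I actually want this for an arbitrary endomorphism $f$, not just a retraction, so I would first argue that if $f\in\mathrm{End}(\Delta(B_4))$ is not injective on $\mathcal E$ we can still handle it, or — more efficiently — reduce to the retraction case by the standard fact that the image $f(\c)$ contains a retract of $\c$ isomorphic to its core. Second, apply Lemma \ref{retraction-maximum-size-clique} to obtain a maximum clique $K=A\times B$ in the image with $\{A,B\}$ a partition of $[4]$ into two pairs; since $|K|=4=\omega(\Delta(B_4))$ and $\Delta(B_4)$ itself has maximum cliques of size $4$, the preimage clique $K'$ maps bijectively onto $K$. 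Third, use Lemma \ref{retraction-image-nonidempotent} together with Proposition \ref{retract-idempotents}: the point is that Proposition \ref{retract-idempotents} already says a retract containing all four idempotents must be all of $\c$, so it remains only to show the retract $\mathcal G'=f(\c)$ does contain every idempotent $(i,i)$. Fourth, to see $(i,i)\in\mathcal G'$: for each $i$, pick two distinct $j,k\in[4]\setminus\{i\}$; then $(i,j)f$ and $(i,k)f$ are non-idempotent (Lemma \ref{maps-nonidempotent-B_4}) and, by the structure in Lemma \ref{retraction-image-nonidempotent}, their common neighbour $(i,i)$ is forced to lie in $\mathcal G'$ — more precisely, since $(i,j)$ and $(i,k)$ commute and both are non-adjacent to $(i,i)$, the image idempotent determined by the first coordinate $i$ must be in $\mathcal G'$. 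Running this for all $i\in\{1,2,3,4\}$ puts all of $\mathcal E$ in $\mathcal G'$, so Proposition \ref{retract-idempotents} gives $\mathcal G'=\c$, i.e. $f$ is surjective, hence (finiteness) bijective, hence an automorphism.

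The main obstacle is the reduction from an arbitrary endomorphism to a retraction: Proposition \ref{retract-idempotents} and Lemmas \ref{retraction-maximum-size-clique}, \ref{retraction-image-nonidempotent} are phrased for \emph{retractions}, whereas $\mathrm{End}(\c)$ ranges over all endomorphisms. The honest way to close this gap is the classical argument that a finite graph whose \emph{core} is the whole graph has $\mathrm{End}=\mathrm{Aut}$: any endomorphism $f$, being an endomorphism of a finite graph, has a power $f^m$ that is idempotent, hence a retraction onto $\mathcal G'=f^m(\c)$; the above steps then force $\mathcal G'=\c$, so $f^m$ is bijective, so $f$ is bijective, so $f\in\mathrm{Aut}(\c)$. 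I would carry out exactly this: reduce to $f^m$ idempotent, invoke Proposition \ref{retract-idempotents} via the neighbourhood argument to conclude $f^m(\c)=\c$, and deduce $f\in\mathrm{Aut}(\Delta(B_4))$; the remaining inclusion $\mathrm{Aut}\subseteq\mathrm{End}$ is trivial. All the per-vertex case analysis is routine given Lemma \ref{nbd} and Remark \ref{r.non adjacent vertices}, so I would not belabour it.
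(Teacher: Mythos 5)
Your reduction from arbitrary endomorphisms to retractions (via an idempotent power of $f$, or equivalently via Proposition \ref{core} and the definition of a core) is fine and matches the paper's framework. The gap is in your fourth step, where you claim that every idempotent $(i,i)$ must lie in the retract $\mathcal G'$ so that Proposition \ref{retract-idempotents} finishes the proof. The justification offered --- that $(i,j)$ and $(i,k)$ are both non-adjacent to $(i,i)$, hence ``the image idempotent determined by the first coordinate $i$ must be in $\mathcal G'$'' --- is not an argument: retractions, like all graph homomorphisms, preserve adjacency but say nothing about non-adjacency, so the non-adjacency of $(i,j)$ and $(i,k)$ to $(i,i)$ places no constraint forcing $(i,i)$ into the image. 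Worse, for $n=4$ this is precisely where the theorem is delicate: $\mathcal E=\{(1,1),(2,2),(3,3),(4,4)\}$ is itself a clique of maximum size $4$ (Lemma \ref{clique-4}), and by Remark \ref{r.clique} there are other maximum cliques $A\times B$, with $\{A,B\}$ a partition of $[4]$ into two pairs, consisting entirely of non-idempotents. A retraction could a priori carry $\mathcal E$ onto such a clique $A\times B$, in which case no idempotent need be fixed and your step fails outright. (This obstruction is special to $n=4$; for $n\ge 5$ maximum cliques contain no idempotents at all, which is why the paper treats $n=4$ separately.)

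The paper's proof is devoted almost entirely to excluding this configuration: assuming $\mathcal E f=A\times B\ne\mathcal E$, it shows the images of the four idempotents are forced (up to relabelling) to be $(1,1)f=(1,2)$, $(2,2)f=(3,2)$, $(3,3)f=(3,4)$, $(4,4)f=(1,4)$ with $A=\{1,3\}$, $B=\{2,4\}$, and then tracking $(2,4)f$, $(2,3)f$ and finally $(1,3)f$ through Lemma \ref{retraction-image-nonidempotent} produces a violation of Remark \ref{r.non adjacent vertices}. Some such case analysis (or a genuinely different argument ruling out $\mathcal E f\ne\mathcal E$) is indispensable; as written, your proposal only covers the easy case $\mathcal E f=\mathcal E$ and does not prove the theorem.
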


\begin{proof}
In view of Proposition \ref{core}, we show that $\c$ is a core. For that  it is sufficient to show $\c$ admits no proper retract (cf. \cite{a.1992Hell}). On contrary, suppose $\c$ admits a proper retract  $\mathcal G'$. Then there exists a homomorphism $f : \c \rightarrow \mathcal G'$ such that $xf = x$ for all $x \in \mathcal G'$. Since the set $ \mathcal E  = \{(1,1), (2,2), (3,3), (4,4)\}$ forms a clique of maximum size as $\omega(\Delta(B_4)) = 4$ (cf. Lemma \ref{clique-4}) so that $\mathcal Ef$ is a clique of size  $4$ (see Remark \ref{r.image-clique-maximum-size}).  By Remark \ref{r.clique}, we have either $\mathcal Ef = \mathcal E$ or $\mathcal Ef = A \times B$  where $A, B \subseteq \{1, 2, 3, 4\}$ with $|A| = |B| = 2$. If $\mathcal Ef =\mathcal E$, then by Proposition \ref{retract-idempotents}, $\mathcal G' = \c$; a contradiction. Thus, $\mathcal Ef = A \times B$. Let $(1, 1)f  = (i, j)$ where $i \ne j$. Then $(i, j)f = (i, j)$ as $(i, j) \in \mathcal G'$. Note that either $i= 1$ or $j  = 1$. If both $i, j \ne 1$, then $(i, j) \sim (1, 1)$. Consequently, $(1, 1)f \sim (i, j)f$ which is not possible as $(i, j)f = (1, 1)f =(i, j)$. Without Loss of generality, we assume that $i = 1$ and $j = 2$. Similarly, $(2, 2)f \in \{(2, k), (k, 2)\}$ for some $k \ne 1, 2$. Since $(2,2)f \sim (1, 2) = (1, 1)f$ as $(1, 1) \sim (2, 2)$.  If $(2, 2)f = (2, k)$, then $(2, k) \sim (1, 2)$; a contradiction of Remark \ref{r.non adjacent vertices} so $(2, 2)f = (k, 2)$ for some $k \ne 1, 2$. Without loss of generality, we suppose $k = 3$. In the same way, we get $(3,3)f = (3,4)$ and $(4,4)f = (1,4)$. Therefore, we have $A = \{1, 3\}$ and $B = \{2,4\}$. In view of Lemma \ref{retraction-image-nonidempotent}, $(2, 4)f \in \{(1, 2), (3, 2), (2, 4)\}$. Since $(1, 1) \sim (2, 4)$ so that $(1,1)f = (1, 2) \sim (2, 4)f$ gives $(2, 4)f = (3, 2)$. 
 Similarly, we get $(2, 3)f = (3, 4)$. Again by  Lemma \ref{retraction-image-nonidempotent}, we have $(1,3)f \in \{(3,2), (3,4), (1,3)\}$. 
For  $(1,3) \sim (2, 3)$ and $(1, 3) \sim (2, 4)$ we obtained $(1,3)f \sim (3, 4)$ and $(1, 3)f \sim (3, 2)$. Consequently, we get a contradiction of Remark \ref{r.non adjacent vertices}.
\end{proof}

\begin{theorem}\label{End}
For $n \ge 5$, we have \rm{End}$(\c)$ = \rm{Aut}$(\c)$.
\end{theorem}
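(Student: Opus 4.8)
By Proposition~\ref{core}, it suffices to show that $\c$ is a core, i.e.\ that $\c$ admits no proper retract. So I will assume, for contradiction, that $f\colon\c\to\mathcal G'$ is a retraction onto a proper subgraph $\mathcal G'$, with $xf=x$ for all $x\in V(\mathcal G')$, and derive a contradiction by the same strategy used in the $n=4$ case, but now exploiting the extra room available when $n\ge 5$. The first step is to apply Lemma~\ref{retraction-maximum-size-clique}: there is a clique of maximum size $K=A\times B$ in $\mathcal G'$, where $\{A,B\}$ is a partition of $[n]$ with the size constraints of that lemma. In particular $\mathcal G'$ contains all of $A\times B$. The key point is then to show that $\mathcal G'$ must also contain every idempotent $(i,i)$; once that is established, Proposition~\ref{retract-idempotents} immediately forces $\mathcal G'=\c$, contradicting properness.

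**Main steps.** First I would pin down the images of the idempotents. Since $\mathcal E=\{(1,1),\dots,(n,n)\}$ need not be a maximum clique for $n\ge 5$, I instead argue directly: for each $i\in[n]$, consider $(i,i)f$. By Lemma~\ref{retraction-image-nonidempotent} (applied after noting $(i,i)f$ lies in $\mathcal G'$ and is fixed by $f$) together with Lemma~\ref{maps-nonidempotent}, I want to show $(i,i)f$ is forced to be an idempotent — or, failing that, to show $A\times B$ together with the images of the $(i,i)$ already spans enough of $\c$. Concretely: pick $(p,q)\in A\times B$; since this is fixed and belongs to a maximum clique consisting of non-idempotents, and since for any $k\notin\{p,q\}$ we have $(p,q)\sim(k,k)$, the element $(k,k)f$ must be adjacent to $(p,q)$, hence by Lemma~\ref{nbd} and Remark~\ref{r.non adjacent vertices} it cannot be of the forbidden forms; running this over enough pairs $(p,q)$ in the large clique $A\times B$ (which has size $\ge 6$ for $n\ge 5$) pins $(k,k)f$ down to be $(k,k)$ itself. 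The point where $n\ge 5$ is used is exactly that $A\times B$ is large enough — with $|A|,|B|\ge 2$ and $|A|+|B|=n\ge 5$, one of them has size $\ge 3$ — to leave no consistent alternative for the image of an idempotent. Once every $(i,i)$ is fixed by $f$, we have $(i,i)\in\mathcal G'$ for all $i$, and Proposition~\ref{retract-idempotents} gives $\mathcal G'=\c$.

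**Expected obstacle.** The delicate part is the case analysis showing that no idempotent $(i,i)$ can be sent to a non-idempotent vertex. A priori Lemma~\ref{retraction-image-nonidempotent} only says $(i,i)f\in\{(t,i):t\in A\}\cup\{(i,t):t\in B\}\cup\{(i,i)\}$ if $(i,i)$ were non-idempotent under $f$ — but here I must rule out $(i,i)f$ being non-idempotent at all. The argument is that if $(i,i)f=(x,y)$ with $x\ne y$, then for every $j\ne i$ we have $(i,i)\sim(j,j)$, forcing $(x,y)=(i,i)f$ to be adjacent to $(j,j)f$; playing off the constraints coming from the many fixed non-idempotent vertices in $A\times B$ and from Remark~\ref{r.non adjacent vertices} yields the contradiction, much as in the $n=4$ proof but with the bookkeeping simplified by the larger clique. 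I expect this step — organizing the adjacency constraints so that they simultaneously contradict all candidate non-idempotent images — to be the one requiring the most care; everything after it (invoking Proposition~\ref{retract-idempotents}) is immediate.
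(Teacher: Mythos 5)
There is a genuine gap at the central step. Your plan is: get the maximum clique $K=A\times B\subseteq\mathcal G'$ from Lemma~\ref{retraction-maximum-size-clique}, then show every idempotent is fixed by $f$ using only its adjacencies to the (fixed) vertices of $K$, and finish with Proposition~\ref{retract-idempotents}. But the adjacency constraints you invoke do not pin $(k,k)f$ down to $(k,k)$. Work it out for $k\in A$: the vertices of $K$ adjacent to $(k,k)$ are exactly the $(p,q)$ with $p\in A\setminus\{k\}$, $q\in B$, and since these are fixed, the conditions $(k,k)f\sim(p,q)$ (together with $(k,k)f\neq(p,q)$) only force $(k,k)f\in\{(k,b):b\in B\}\cup\{(a,k):a\in A\}\cup\{(k,k)\}$ --- a set of about $n$ surviving candidates, no matter how large $A\times B$ is. Adding the constraints $(j,j)f\sim(k,k)f$ for $j\neq k$ still does not suffice: for instance the assignment $(k,k)\mapsto(k,b_0)$ for all $k\in A$ and $(j,j)\mapsto(a_0,j)$ for all $j\in B$ (with $a_0\in A$, $b_0\in B$ fixed) is compatible with all adjacencies inside $K\cup\mathcal E$, since $(k,b_0)\sim(k',b_0)$, $(a_0,j)\sim(a_0,j')$ and $(k,b_0)\sim(a_0,j)$ all hold. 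To eliminate such candidates one must first control the images of the non-idempotent vertices \emph{outside} $A\times B$, and that is precisely where all the difficulty lies; your sketch defers it to ``much as in the $n=4$ proof,'' which does not transfer, because for $n=4$ the set $\mathcal E$ is itself a maximum clique and Remark~\ref{r.clique} gives a finite list of maximum cliques, neither of which is available for $n\ge 5$.

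For comparison, the paper's proof does not route through Proposition~\ref{retract-idempotents} at all in the $n\ge5$ case. After obtaining $K=A\times B$, it introduces the set $X=\{i\in A\setminus\{1\}:(1,i)f=(1,i)\}\cup\{1:(2,1)f=(2,1)\}$ and splits on whether $|X|>|A\setminus X|$ or not. In the first case a long chain of claims (using Lemma~\ref{retraction-image-nonidempotent} and Remark~\ref{r.non adjacent vertices} repeatedly, plus bijectivity of $f$ restricted to various cliques $A_i$, $B_x$) shows every non-idempotent is fixed, and only \emph{then} are the idempotents shown to be fixed; in the second case a counting/bijection argument forces $X=A$, a contradiction. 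So the order of battle is the reverse of yours: the non-idempotents must be controlled before anything can be said about the idempotents. As written, your proposal asserts the conclusion of that hard analysis without supplying it.
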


\begin{proof}
In order to prove the result, we show that $\c$ is a core (see Proposition \ref{core}). For that  it is sufficient to show $\c$ admits no proper retract (cf. \cite{a.1992Hell}). On contrary, suppose $\c$ admits a proper retract  $\mathcal G'$. Then there exists an onto homomorphism $f : \c \rightarrow \mathcal G'$ such that $xf = x$ for all $x \in \mathcal G'$. In view of Lemma \ref{retraction-maximum-size-clique}, there exists a clique $K = A \times B$ where $A$ and $B$ forms a partition of $[n]$.  Without loss of generality, we  may assume that $A = \{1, 2, \ldots, t\}$ and $B = \{t+1, t+2, \ldots, n\}$ where $t \in \{\frac{n}{2}, \frac{n - 1}{2}, \frac{n + 1}{2} \}$. Consider the set \[X = \{i \in A \setminus \{1\} : (1, i)f = (1, i) \} \cup \{1 : (2, 1)f = (2, 1)\}.\]

The following claims will be useful in the  sequel. 

\begin{claim}\label{c.i-in-X}
\begin{enumerate}[\rm (i)]
\item For $i \in X$ and $r \ne i \in  A$, we have $(r, i)f  = (r, i)$.
\item For $i \in A \setminus X$ and $r \ne i  \in A$, we have  $(r, i)f = (i, s)$ for some $s \in B$.
\end{enumerate}
\end{claim}

\textit{Proof of Claim} (i) Let $i \ne 1 \in X$. Then $(1, i)f = (1, i)$. If $r \in A \setminus \{1, i\}$, then we have either $(r, i)f = (r, i)$ or $(r, i)f = (i, s)$ where $s \in B$ (cf. Lemma \ref{retraction-image-nonidempotent}). Now, we assume that $(r, i)f = (i, s)$ for some $s \in B$. Since $(r, i)f \sim (1, i)f$ as $(r, i) \sim (1, i)$ so that $(i, s) \sim (1, i)$; a contradiction of Remark \ref{r.non adjacent vertices}. Thus, $(r, i)f = (r, i)$ for all $r \ne i \in A$. Similarly one can observe that if $i = 1 \in X$ and $r \ne i  \in A$, we have $(r, 1)f = (r, 1)$.

\smallskip

\noindent
(ii) First suppose  $i \ne 1 \in A \setminus X$. In view of Lemma \ref{retraction-image-nonidempotent}, we have either $(1, i)f = (1, i)$ or  $(1, i)f = (i, s)$ for some $s \in B$. Note that $(1, i)f \ne (1, i)$ as $i \in A \setminus X$ so $(1, i)f = (i, s)$ for some $s \in B$. If $r \in A \setminus \{1, i\}$, then we have either $(r, i)f = (r, i)$ or $(r, i)f = (i, s')$ where $s' \in B$ (cf. Lemma \ref{retraction-image-nonidempotent}). Suppose $(r, i)f= (r, i)$. Since $(r, i)f \sim (1, i)f$ as $(r, i) \sim (1, i)$ so that $(r, i) \sim (i, s)$; a contradiction of Remark \ref{r.non adjacent vertices}. Thus, $(r, i)f = (i, s')$ for some $s' \in B$.  Similarly, one can observe that if $i = 1 \in A \setminus X$ and $r \ne i  \in A$, we have $(r, 1)f = (1, s)$ for some $s \in B$.

 In view of $X$ we have the following cases.\\
\textbf{Case 1:} Suppose $|X| > |A \setminus X|$. Then  $|X| \geq 2$ as $n \geq 5$. In order to get a contradiction of the fact that $\mathcal G'$ is a proper retract of $\c$, we prove that $f$ is an identity map in this case.  First we show that each non-idempotent element of $\c$ maps to itself under $f$ through the following claim.

\textbf{Note:} If $n > 5$, then $|A| \geq 3$. For $n = 5$, we have either $|A| =2$, $|B| = 3$ or $|A| = 3$, $|B| = 2$. If $|A| =2$ and $|B| = 3$, then $X  = A = \{1, 2\}$.  This case we will  discuss separately in the following claim (vi). Therefore, in part (ii) to (v), we assume that $|A| \geq 3$.
\begin{claim}\label{c.retraction-image-non-idempotent} 

\begin{enumerate}[\rm (i)] 
\item For $p \in A, q\in B$, we have $(p, q)f = (p, q)$.
\item If $p \ne q$ such that $(p, q)f = (a, p)$ for some $a \in A$, then $a \in A \setminus X$.
\item For $p \in B, q\in A$, we have $(p, q)f = (p, q)$.
\item For $p, q\in B$, we have $(p, q)f = (p, q)$.
\item For $p, q \in A$, we have $(p, q)f = (p, q)$.
\item For $n = 5$, $|A| = 2$,  $|B| = 3$ and $p \ne q$, we have $(p, q)f = (p, q)$.
\end{enumerate}
\end{claim}

\textit{Proof of Claim:} (i) Since $K = A \times B$ is contained in $\mathcal G'$ so that $(p, q)f = (p, q)$ for all $p \in A, q\in B$.

\smallskip

\noindent
(ii) On contrary, we assume that $a \in X$. Clearly, $ a \ne p$ (cf. Lemmas \ref{maps-nonidempotent-B_4} and \ref{maps-nonidempotent}). If $p \in A$, then by Claim \ref{c.i-in-X}(i), we get $(p, a)f = (p, a)$. Note that $q \ne a$, otherwise $(p, q)f = (p, q) = (q, p)$  implies $p = q$; a contradiction. Consequently, $(p, q) \sim (p, a)$ gives $(p, a)f = (p, a) \sim (a, p) = (p, q)f$; a contradiction of Remark \ref{r.non adjacent vertices}. Thus, $p \in B$.  For $r  \in A \setminus \{a, q\}$, by Claim \ref{c.i-in-X}(i), we have $(r, a)f = (r, a)$. Since $(p,q) \sim (r, a)$ as $a \ne p$ and $r \ne q$ so that 
$(p,q)f = (a, p) \sim (r, a) = (r, a)f$ which is not possible. Thus, $a \notin X$.

\smallskip

\noindent
(iii) Let $p \in B$ and $q \in A$. First suppose that $q \in X$. Then by Lemma \ref{retraction-image-nonidempotent}, $(p, q)f \in \{(s, p) : s \in A \} \cup \{(q, s) : s \in B \} \cup \{(p, q)\}$. For $r \ne q \in A$, we have $(r, q)f = (r,q)$ (cf. Claim \ref{c.i-in-X}(i)).  Note that $(p, q)f \ne (q, s)$ for any $s \in B$. For instance, if $(p, q)f = (q, s)$ for some $s \in B$, then $(p, q)f = (q, s) \sim (r, q) = (r, q)f$ as $(p, q) \sim (r, q)$, where $r \ne q \in A$; a contradiction of Remark \ref{r.non adjacent vertices}. It follows that $(p, q)f \in \{(s, p) : s \in A \}\cup \{(p, q)\}$. Suppose $(p, q)f = (s, p)$ for some $s \in A$.
Note that $s \in A\setminus X$ (see part (ii)). Now we claim that for any $j \ne q \in X$, we have $(p, j)f = (s', p)$ for some $s' \in A \setminus X$. In view of Lemma \ref{retraction-image-nonidempotent}, $(p, j)f \in \{(s', p) : s \in A \} \cup \{(j, s') : s' \in B \} \cup \{(p, j)\}$. Note that $(p, j)f \ne (p,j)$ because $(p, q) \sim (p, j)$ but $(p, q)f = (s, p)  \nsim (p, j)$ (cf. Remark \ref{r.non adjacent vertices}). In a similar manner, of $(p, q)f \ne (q, s)$ for any $s \in B$, one can show that $(p, j)f \ne (j, s')$ for any $s' \in B$. It follows $(p, j)f = (s', p)$ for some $s' \in A$. By part (ii), we get $(p, j)f = (s', p)$ for some $s' \in A \setminus X$.  Since the subgraph induced by the vertices of the form $(p, j)$ where $j \in X$ forms a clique. Consequently, for any $i \ne j \in X$, we get $(p, i)f = (s, p)$ and $(p, j)f = (s', p)$ are distinct for some $s, s' \in A \setminus X$. Therefore, we have $|X| \leq |A \setminus X|$; a contradiction. Thus, $(p, q)f = (p, q)$ for all $p \in B$ and $q \in X$.

Now we assume $q \in A \setminus X$. In view of Lemma \ref{retraction-image-nonidempotent}, $(p, q)f \in \{(\alpha, p) : \alpha \in A \} \cup \{(q, \beta) : \beta \in B \} \cup \{(p, q)\}$. Suppose $(p, q)f = (\alpha, p)$ for some $\alpha \in A$. In fact $\alpha \in A\setminus X$ (see part (ii)). Choose $i \in X$ as $|X| > |A \setminus X|$, from above we get $(p, i)f = (p, i)$ as $p \in B$. Since $(p, q) \sim (p, i)$ so that $(p, q)f = (\alpha, p) \sim (p, i) = (p, i)f$ which is not possible. Therefore, we have $(p, q)f = (q, \beta)$ for some $\beta \in B$ if $(p, q)f \ne (p, q)$. Again for $i \in X$ and from the above we get $(\beta, i)f = (\beta, i)$. Since  $(p, q) \sim (\beta, i)$ as $p, \beta \in B$ and $ q, i \in A$ gives $(p, q)f = (q, \beta) \sim (\beta, i) = (\beta, i)f$; a contradiction of Remark \ref{r.non adjacent vertices}. Thus,  $(p, q)f = (p, q) \; \forall p \in B$  and $q \in A \setminus X$ and hence the result hold.

\smallskip

\noindent
(iv) Let $p \ne q \in B$. In view of Lemma \ref{retraction-image-nonidempotent}, $(p, q)f \in \{(s, p) : s \in A \}\cup \{(p, q)\}$. Suppose $(p, q)f = (s, p)$ for some $s \in A$. Since $(p, s) \sim (p, q)$ so that $(p, s)f = (p, s) \sim (s, p) = (p,q)f$; a contradiction of Remark \ref{r.non adjacent vertices}. Thus, $(p, q)f = (p, q)$ for all $p, q \in B$.

\smallskip

\noindent
(v) By Claim \ref{c.i-in-X}(i), we have $(p, q)f = (p, q)$ when $q \in X$ so it is sufficient to prove the result for $q \in A\setminus X$. In view of Lemma \ref{retraction-image-nonidempotent}, $(p, q)f \in \{(q, s) : s \in B \} \cup \{p, q)\}$.  Suppose $(p, q)f = (q, s)$ for some $s \in B$. Then by (iv) part, we have  $(s, x)f =(s, x)$ where $x \ne s \in B$. For $p, q \in A$ and $s, x \in B$, we get $(p,q) \sim (s, x)$ gives $(p, q)f = (q, s) \sim (s, x) = (s, x)f$; a contradiction of Remark \ref{r.non adjacent vertices}. Thus, $(p, q)f = (p, q)$ for all $p \ne q \in A$.

%

\smallskip

\noindent
(vi) Suppose $n = 5$, $|A| = 2$,  $|B| = 3$ and $p \ne q$. Then $X = A$ so $(p, q)f = (p, q)$ for all $p, q \in A$ (see Claim \ref{c.i-in-X}(i)). If $p, q \in B$, then by Lemma \ref{retraction-image-nonidempotent}, $(p, q)f \in \{(s, p) : s \in A \} \cup \{p, q)\}$. Suppose $(p, q)f = (s, p)$ for some $s \in A$. Then there exists $s' \in A$ as $|A| = 2$. Consequently, $(s', s)f = (s', s)$ and $(p, q) \sim (s', s)$ gives  $(p, q)f =(s, p) \sim (s', s) = (s', s)f$ which is not possible. Thus, $(p, q)f =(p, q)$ for all $p, q \in B$. Now we suppose that $p \in B$ and $q \in A$. In view of Lemma \ref{retraction-image-nonidempotent}, we have $(p, q)f \in \{(r, p) : r \in A \} \cup \{(q, r') : r' \in B \} \cup \{(p, q)\}$. Suppose $(p, q)f= (r, p)$ for some $r \in A = X$. For $\beta \in B \setminus \{p\}$, we get $(p, q) \sim (p, \beta)$ and $(p, \beta)f = (p, \beta)$ provides  $(s, p) \sim (p, \beta)$ which is not possible. Therefore, $(p, q)f \in \{(q, r') : r' \in B \} \cup \{(p, q)\}$. Let $(p, q)f = (q, r')$ for some  $r' \in B$. Since $|B| = 3$ so that there exists $z \in B \setminus \{p, r'\}$. As a consequence, we have $(r', z) \sim (p, q)$ and $(r', z)f = (r', z)$ implies $(r', z)f = (r', z) \sim (q, r') = (p, q)f$; a contradiction. Thus, $(p, q)f = (p, q)$ for all $p \ne q \in [n]$. 
\vspace{0.5cm}

Thus, by Claim \ref{c.retraction-image-non-idempotent}, we have $(p, q)f = (p, q)$ for all $p \ne q$. Now we show that $(p, p)f = (p, p)$ for all $p \in [n]$. On contrary assume that $(p, p)f = (x, y)$ for some $(x, y) \ne (p,p) \in B_n$. Then $(x, y)f = (x, y)$ as $f$ is a retraction on $\c$. Note that $x \ne y$. Otherwise, $(p, p) \sim (x, y)$ but $(p, p)f = (x, y)f = (x, y)$; a contradiction. Also, observe that $p \in \{x, y\}$. Otherwise, being an adjacent elements $(x, y)$ and $(p, p)$ have same images; again a contradiction. Without loss of generality  assume that  $x= p$. For $z \in [n] \setminus \{y, p\}$, we get $(p, p) \sim (y, z)$ so that  $(p, p)f = (p, y) \sim (y, z) = (y, z)f$; a contradiction of Remark \ref{r.non adjacent vertices}. Thus, $f$ is an identity map. Consequently,  $\mathcal G' = \c$; a contradiction. Thus, \textbf{Case 1}  is not possible.

\textbf{Case 2:} Suppose $|X| \leq |A\setminus X|$. Then $X \ne A$. Now, we have the following subcases depend on $n$.  In each subcase,  we prove that $A = X$ which is a contradiction.

\smallskip

\noindent

\textit{Subcase 1:} $n$ is even. The following claim will be useful in the sequel.
\begin{claim}\label{c.retraction-A_i-B_{t_i}}
\begin{enumerate}[\rm (i)]
\item Let $i \in A\setminus X$. Then there exists a unique $s_i \in B$ such that the restriction map  $f_{A_i \times B_{s_i}}$ of $f$ is a bijection from $A_i = \{(r, i) : r \ne i \in A\} \; \text{onto} \; B_{s_i} = \{(i, s) : s \ne s_i \in B\}$. 

\item In view of part (i), for $Y = \{s_i \in B : i \in A \setminus X\}$, we have $Y = B$. Moreover, for $i \ne j \in A \setminus X$, we have $s_i \ne s_j$.
\item If $x \ne y \in B$, then $(x, y)f = (x, y)$.
\item If $i \ne j \in A$, then $(i, j)f = (i, j)$. 
\end{enumerate}
 \end{claim}

\textit{Proof of Claim:} (i) Let $i \in A\setminus X$. Then for $r \ne i \in A$, we have  $(r, i)f =(i, s)$ for some  $s \in B$ (see Claim \ref{c.i-in-X}(ii)). Consequently, $A_if \subset \{(i, s) : s \in B \}$.  Since $f$ is one-one on $A_i$ because $A_i$ forms a clique, we get $|A_if| = |A_i| = |A| - 1 = |B| - 1$ as $n$ is even. Thus, there exists $s_i \in B$ such that $A_if = B_{s_i}$, where $B_{s_i} = \{(i, s) : s \in B \setminus \{s_i\}\}$.
Hence,  $f_{A_i \times B_{s_i}}$ is a one-one map from $A_i$ onto $B_{s_i}$.

\smallskip

\noindent
(ii) Clearly $Y \subseteq B$. We show that $Y \subset B$ is not possible. On contrary, if $Y \subset B$ so there exists $s \in B \setminus Y$. Let $x \ne s \in B$. By Lemma \ref{retraction-image-nonidempotent}, $(s, x)f \in \{(\alpha, s) : \alpha \in A \}\cup \{(s, x)\}$. We provide a contradiction for both the possibilities of $(s, x)f$. Suppose $(s, x)f = (\alpha, s)$ for some $ \alpha \in A$. By Claim \ref{c.retraction-image-non-idempotent}(ii), in fact we have $(s,x)f = (\alpha, s)$ for some $\alpha \in A \setminus X$. Then by part (i) there exists $s_{\alpha} \in B$ such that the map  $f_{A_{\alpha} \times B_{s_{\alpha}}}$ is a bijection. As $s_{\alpha} \in Y$, $s \ne s_{\alpha}$ so that $(\alpha, s) \in B_{s_{\alpha}}$. Consequently, there exists $r_{\alpha} \ne \alpha \in A$ such that $(r_{\alpha}, \alpha)f = (\alpha, s)$. Now since $r_{\alpha}, \alpha \in A$ and $s, x \in B$ we get $(r_{\alpha}, \alpha) \sim (s, x)$ as $A$ and $B$ forms a partition of $[n]$ so that  $(r_{\alpha}, \alpha)f \sim (s, x)f$. But $(r_{\alpha}, \alpha)f = (s, x)f = (\alpha, s)$ which is not possible. It follows that $(s, x)f = (s, x)$. For $i \in A \setminus X$, there exists $s_i \in Y$ such that the map $f_{A_{i} \times B_{s_{i}}}$ is a bijection. Since $s \ne s_i$ as $s \notin Y$ gives $(i, s) \in B_{s_i}$. As a result, there exists $r \ne i \in A$ such that $(r, i)f = (i, s)$. For $r, i \in A$ and $s, x \in B$, we get $(s, x) \sim (r, i)$; again a contradiction as $(s, x)f = (s, x) \sim (i, s) = (r_{i},i)f$. Hence, $Y = B$. 

\smallskip

\noindent
(iii)  Let $x, y \in B$. Then by Lemma \ref{retraction-image-nonidempotent}, $(x, y)f \in \{(\alpha, x) : \alpha \in A \} \cup \{(x, y)\}$. Suppose $(x, y)f = (\alpha,x)$ for some $\alpha \in A$. In fact $\alpha \in A \setminus X$ ( see Claim \ref{c.retraction-image-non-idempotent}(ii)). 
For $x \in B = Y$, there exists $i_x \in A \setminus X$ such that  $f_{A_{i_x} \times B_x }$ is a bijection. If $\alpha \ne i_x \in A \setminus X$, then by part (i) there exists $s  _{\alpha} \in B \setminus \{x\}$ such that the  restriction map $f_{A_{\alpha} \times B_{s_{\alpha} }}$ is a bijective map and $(\alpha, x) \in B_{s_{\alpha}}$. Consequently, we get $(r, \alpha)f = (\alpha, x)$ for some $r \ne \alpha \in A$. But $(x, y) \sim (r, \alpha)$ as $x, y \in B$ and $r, \alpha \in A$ gives $(x, y)f \ne (r, \alpha)f$. However, we have $(x, y)f = (r, \alpha)f$; a contradiction. It follows that $\alpha = i_x$. In view of Lemma \ref{retraction-image-nonidempotent}, for $y' \in B\setminus \{x, y\}$, note that $(x, y')f \in \{(\alpha', x) : \alpha' \in A \} \cup \{(x, y')\}$. Now observe that $(x, y')f \ne (x, y')$. If $(x, y')f = (x, y')$, then $(x, y) \sim (x, y')$ provides $(\alpha, x) \sim (x, y')$; a contradiction of Remark \ref{r.non adjacent vertices}. Thus, $(x, y')f = (\alpha', x)$ for some $\alpha' \in A \setminus X$. Further note that $\alpha' \ne \alpha$. Otherwise, $(x, y) \sim (x, y')$ gives $(x, y)f \sim (x, y')$ but $(x, y)f = (x, y')f = (\alpha, x)$ which is not possible. Consequently, $\alpha' \ne i_x$. By the similar argument used for $\alpha \ne i_x$, we get  $(r', \alpha')f = (\alpha', x)$ for some $r' \ne \alpha' \in A$. Since $(r', \alpha') \sim (x, y')$ we get $(r', \alpha')f \sim (x, y')f$ but  $(r', \alpha')f =  (x, y')f = (\alpha', x)$ is not possible. Hence, $(x, y)f = (x, y)$ for all $x\ne y \in B$.

\smallskip

\noindent
(iv) Suppose $i \ne j \in A$. Then by Lemma \ref{retraction-image-nonidempotent}, $(i, j)f \in \{(j, \beta) : \beta \in B \} \cup \{(i, j)\}$. If $(i, j)f = (j, \beta)$ for some $\beta  \in B$ then for $x \in B\setminus \{\beta \}$  note that $(i, j)\sim (\beta, x)$ but    $(i, j)f = (j, \beta) \nsim (\beta, x) = (\beta, x)f$ (cf. part (iii)). Thus, $(i, j)f = (i, j)$.

\vspace{0.3cm}

By Claim \ref{c.retraction-A_i-B_{t_i}}(iv), we get  $A = X$. Therefore, \textbf{Case 2} is not possible when $n$ is even. 

\vspace{0.3cm}
\textit{Subcase 2:} $n$ is odd. By Lemma \ref{retraction-maximum-size-clique}, we have either $|A| = \frac{n + 1}{2}$, $|B| = \frac{n - 1}{2}$ or $|A| = \frac{n - 1}{2}$, $|B| = \frac{n + 1}{2}$ (see proof of Lemma \ref{clique inequality}). First we prove the following claim.
\begin{claim}\label{c.case-2-odd}
\begin{enumerate}[\rm (i)]
\item If $x \ne y \in B$, then $(x, y)f = (x, y)$.
\item If $x \in B$ and $i \in A$, then $(x, i)f = (x, i)$.
\end{enumerate}
\end{claim}
\textit{Proof of Claim:} (i) First, we suppose that $|A| = \frac{n + 1}{2}$ and $|B| = \frac{n - 1}{2}$.  Let  $x \ne y \in B$. Then by Lemma \ref{retraction-image-nonidempotent}, we get either $(x, y)f = (i, x)$ for some $i \in A$ or $(x, y)f = (x, y)$. Let if possible, $(x, y)f  = (i, x)$ for some $i \in A$. In fact $i \in A \setminus X$ (cf. Claim \ref{c.retraction-image-non-idempotent}(ii)). Also, for $r \ne i \in A$ and $i \in A \setminus X$, by Claim \ref{c.i-in-X}(ii), we get $(r, i)f = (i, s)$ for some $s \in B$. As a result, $A_if \subseteq B_i$ where $A_i = \{(r, i) : r\ne i \in A \}$ and $B_i = \{(i, s): s \in B \}$. Since $A_i$ forms a clique, we have $f$ is one-one on $A_i$. Moreover, $|A_if| = |A_i| = |A| - 1 = |B| = |B_i|$. Therefore, we get a bijection  $f_{A_i \times B_i}$ from $A_i$ onto $B_i$. Then there exists $r \ne i \in A$ such that $(r, i)f = (i, x)$ for some $x \in B$. Note that $(x, y) \sim (r, i)$ but $(x, y)f = (r, i)f = (i, x)$ which is not possible. Thus, $(x, y)f  = (x, y)$ for all $x \ne y \in B$. 

On the other hand, we may assume that $|A| = \frac{n - 1}{2}$ and $|B| = \frac{n + 1}{2}$. Then $|B| \geq 3$. First, we claim that there exist $x, y \ne B$ such that $(x, y)f = (x, y)$. On contrary, we assume that $(x, y)f \ne (x, y)$ for all $x \ne y$ in $B$. Let $x \ne y \in B$. By Lemma \ref{retraction-image-nonidempotent} and Claim \ref{c.retraction-image-non-idempotent}(ii), we have $(x, y)f = (\alpha, x)$ for some   $\alpha \in A \setminus X$. Similarly, for any $y' \in B \setminus \{x, y\}$, we have $(x, y')f = (\alpha', x)$ for some $ \alpha' \in A \setminus X$.  It follows that $B_xf \subseteq A_x$ where $B_x = \{(x, z) : z \ne x \in B\}$ and $A_x = \{(i, x) : i \in A\setminus X\}$. Since the set $B_x$ forms a clique so that $f$ is one-one on $B_x$ provide $|B_xf| = |B_x| =   |B|-1 = |A| = |A_x| = |A\setminus X|$. Consequently, we get $f_{B_x \times A_x}$ is a bijection and $X = \varnothing$. For $r \ne \alpha \in A$, we have $(r, \alpha)f = (\alpha, \beta)$ for some $\beta \in B$ (cf. Claim \ref{c.i-in-X}(i)). If $\beta = x$, then $(x, y)f = (r, \alpha)f = (\alpha, x)$ but $(x, y) \sim (r, \alpha)$ which is not possible. For $\beta \ne x$, by using the similar argument used for $x$, there exist the subsets $B_{\beta}$ and $A_{\beta}$ such that the restriction map $f_{B_{\beta} \times A_{\beta}}$ is a bijective map. As a consequence $(\alpha, \beta) \in A_{\beta}$ so that there exists $(\beta, s) \in B_{\beta}$ such that $(\beta, s)f = (\alpha, \beta)$. As $ r, \alpha \in A$ and $\beta, s \in B$, $( r, \alpha) \sim (\beta, s)$ gives $( r, \alpha)f \sim (\beta, s)f$ but $( r, \alpha)f = (\beta, s)f = (\alpha, \beta)$ which is not possible. Thus, there exist $p \ne q \in B$ such that $(p, q)f = (p, q)$. 

For any $w \in B \setminus \{p, q\}$, we have either $(p, w)f = (p, w)$ or $(p, w)f = (i, p)$ for some $ i \in A$. Since $(p, q) \sim (p, w)$ so that $(p, q)f = (p, q) \sim (p, w)f$ implies $(p, w)f \ne (i, p)$ for any $i \in A$. Therefore, $(p, w)f = (p, w)$. Consider the subsets $A' = A \cup \{p\}$ and $B' = B \setminus \{p\}$ of $[n]$. Note that $A'$ and $B'$ are the disjoint subsets of $[n]$ with $|A'| = \frac{n + 1}{2}$ and $|B'| = \frac{n - 1}{2}$ so $A' \times B'$ forms a clique of maximum size in $\mathcal G'$. If $|X| > |A' \setminus X|$, then in Claim \ref{c.retraction-image-non-idempotent}(iv), replace $A$ and $B$ with $A'$ and $B'$ respectively, we get $(a, b)f = (a, b)$ for all $a, b \in B'$. For $|X| \leq |A' \setminus X|$, by using the similar concept used above we have $(a, b)f = (a, b)$ for all $a, b \in B'$. Since $(p, w)f = (p, w)$ for all $w \ne x \in B$ so that 
$(a, b)f = (a, b)$ for all $a, b \in B$ and $b \ne x$. If possible, let $(a, p)f \ne (a,p)$, then by Lemma \ref{retraction-image-nonidempotent}, $(a, p)f = (l, a)$ for some $l \in A$. Choose $\beta \in B  \setminus \{a, p\}$ so $(a, \beta) \sim (a, p)$ and $(a, \beta)f = (a, \beta)$ as $a, \beta \in B'$ we obtained $(a, \beta)f = (a, \beta) \sim (l, a) = (a, p)$; a contradiction of remark \ref{r.non adjacent vertices}. Hence, $(a, b)f = (a, b)$ for all $a, b \in B$.

\smallskip

\noindent
(ii) Let $x \in B$ and $i \in A$. Then by Lemma \ref{retraction-image-nonidempotent}, we have $(x, i)f \in \{ (\alpha, x) : \alpha \in A\} \cup \{ (i, \beta) : \beta \in B \} \cup \{(x, i)\}$. Note $(x, i)f \ne (\alpha, x)$ for any $\alpha \in A$. For instance if $(x, i)f = (\alpha, x)$ for some $\alpha \in A$, then $(x, y) \sim (x, i)$ where $y \ne x \in B$ gives $(x, y)f \sim (x, i)f$. By part (i), we get $(x, y)f = (x, y)$ so $(x, y) \sim (\alpha, x)$; a contradiction of Remark \ref{r.non adjacent vertices}. On the other hand now we get a contradiction for  $(x, i)f = (i, \beta)$ for some $\beta \in B$. If $\beta = x$ then for $\gamma \ne x \in B$, we have $(x, \gamma)f = (x, \gamma)$ ( by part (i)). Since $(x, i) \sim (x, \gamma)$ but $(x, i)f =(i, x) \nsim (x, \gamma) = (x, \gamma)f$ which is not possible so $\beta \ne x$. For $n \geq 5$, we have $|B| \geq 2$. If  $|B| = 2$, then $|A| = 3$. There exists $j, k \in A \setminus \{i\}$. Consequently, $(j, i)f = (i, y)$ and $(k, i)f = (i, z)$ for some $y, z \in B$. Because if $(j, i)f = (j, i)$ (cf. Lemma \ref{retraction-image-nonidempotent}) then $(x, i) \sim (j, i)$ gives $(x, i)f = (i, \beta) \sim (j, i) = (j, i)f$; a contradiction of Remark \ref{r.non adjacent vertices}. Similarly, $(k, i)f = (k, i)$ is not possible. Note that $\{(x, i), (j, i), (k, i)\}$ forms a clique of size $3$ so that $\{(x, i)f, (j, i)f, (k, i)f\} =$ \\ $\{(i, y), (i, z), (i, s)\}$. Consequently, $\beta, y, z$ are the elements of $B$. Thus,  $|B| \geq 3$; a contradiction of $|B| = 2$. It follows that $|B| \geq 3$. For $z \in B \setminus \{x, s\}$ we have $(x, i) \sim (\beta, z)$. By part (i), $(\beta, z)f = (\beta, z)$. Consequently, $(x, i)f = (i, \beta) \sim (\beta, z) = (\beta, z)f$ which is not possible. Hence, $(x, i)f = (x, i)$.

\vspace{0.5cm}

Now if $x \in A$, then $i \in A \setminus X$. For $x \in B$, by Claim \ref{c.case-2-odd}(ii), we have $(x, i)f = (x, i)$. Since $(1, i) \sim (x, i)$ so that $(1, i)f = (i, s) \sim (x, i) = (x, i)f$; a contradiction of Remark \ref{r.non adjacent vertices}. Thus, $X \subset A$ is not possible. Consequently, $X = A$; a contradiction of \textbf{Case 2}. In view of \textbf{Case 1} and \textbf{Case 2} such $X$ is not possible. Thus, $\c$ admits no proper retract. Hence, $\c$ is a core.
\end{proof}

\textbf{Open Problem:} The work in this paper can be carried out for other class of semigroups viz. the semigroup of all partial maps on a finite set and its various subsemigroups. In view of Theorem \ref{0-simple}; to investigate  the commuting graph of finite $0$-simple inverse semigroup, it is sufficient to investigate $\Delta(B_n(G))$. In this connection, the results obtained in this paper might be useful. For example, using the result of $\c$, in particular Theorem \ref{Hamiltonian}(iii), we prove the following theorem which gives a partial answer to the problem posed in \cite[Section 6]{a.Araujo2011}. 

\begin{theorem}
For $n \ge 3$, $\Delta(B_n(G))$ is Hamiltonian.
\end{theorem}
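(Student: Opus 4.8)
The plan is to realise $\Delta(B_n(G))$ as a blow-up of $\Delta(B_n)$ --- up to the internal structure of its ``diagonal'' blocks --- and then combine the Hamiltonicity of $\Delta(B_n)$ (Theorem \ref{Hamiltonian}(iii)) with a Dirac-type degree bound. Write $g=|G|$; since $n\ge2$, the centre of $B_n(G)$ is $\{0\}$, so $V(\Delta(B_n(G)))=[n]\times G\times[n]$ has $n^2g$ vertices. For $(i,j)\in[n]\times[n]$ put $V_{ij}=\{i\}\times G\times\{j\}$. A short computation with the product rule shows: for $(i,j)\ne(k,l)$, the vertices $(i,a,j)$ and $(k,b,l)$ commute iff $j\ne k$ and $l\ne i$ (the case that both products are nonzero and equal forces $i=j=k=l$); $V_{ij}$ induces a clique $K_g$ whenever $i\ne j$; and $(i,a,i)\sim(i,b,i)$ iff $ab=ba$. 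In particular, for $(i,j)\ne(k,l)$ the blocks $V_{ij}$ and $V_{kl}$ are either completely joined or completely non-adjacent in $\Delta(B_n(G))$, and they are completely joined precisely when $(i,j)\sim(k,l)$ in $\Delta(B_n)$. Counting neighbours, a non-diagonal vertex $(i,a,j)$ has degree $(n-1)^2g-1$ while every diagonal vertex has degree at least $(n-1)^2g$, so $\delta\bigl(\Delta(B_n(G))\bigr)=(n-1)^2g-1$.

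For $n\ge4$ this already suffices: since $g\ge1$ and $n^2-4n+2\ge2$,
\[
(n-1)^2g-1-\frac{n^2g}{2}=\frac{g(n^2-4n+2)}{2}-1\ge g-1\ge0,
\]
hence $\delta\bigl(\Delta(B_n(G))\bigr)\ge\frac12\bigl|V(\Delta(B_n(G)))\bigr|$, and, exactly as in the proof of Theorem \ref{Hamiltonian}(iii), $\Delta(B_n(G))$ is Hamiltonian by \cite[Theorem 7.2.8]{b.West}.

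For $n=3$ the bound fails ($(n-1)^2g-1=4g-1<\frac{9g}{2}$), and one cannot simply traverse every block by a Hamiltonian path: a diagonal block $V_{ii}$ carries the commuting relation of $G$ on all of $G$, which need not be traceable (already for $G=S_3$). Here I would argue with the blow-up directly, handling the three diagonal blocks \emph{together}. Begin with the explicit Hamiltonian cycle
\[
(1,2),(1,3),(2,3),(2,1),(3,1),(3,2),(1,1),(2,2),(3,3),(1,2)
\]
of $\Delta(B_3)$ exhibited in the proof of Theorem \ref{Hamiltonian}(iii). Replace each off-diagonal vertex $(i,j)$ on it by a Hamiltonian path of $V_{ij}\cong K_g$ (in any order), and replace the run $(1,1),(2,2),(3,3)$ by a single Hamiltonian path of the subgraph $U$ induced on $V_{11}\cup V_{22}\cup V_{33}$ that starts in $V_{11}$ and ends in $V_{33}$. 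Such a path exists because $V_{11},V_{22},V_{33}$ are pairwise completely joined, so $U$ contains the complete tripartite graph $K_{g,g,g}$, and enumerating the three parts in the cyclic pattern $v_1^{(1)},v_1^{(2)},v_1^{(3)},v_2^{(1)},v_2^{(2)},v_2^{(3)},\dots,v_g^{(1)},v_g^{(2)},v_g^{(3)}$ yields a Hamiltonian path of $K_{g,g,g}$ from part $V_{11}$ to part $V_{33}$. Since consecutive blocks along the $\Delta(B_3)$-cycle are completely joined (in particular $V_{32}$ with $V_{11}$, and $V_{33}$ with $V_{12}$), these pieces splice into a single cycle visiting all $9g$ vertices of $\Delta(B_3(G))$, which is the desired Hamiltonian cycle.

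The main obstacle is the case $n=3$: Dirac's criterion is unavailable and a diagonal block may fail to be traceable, so one must process $V_{11},V_{22},V_{33}$ jointly and exploit the complete tripartite structure joining them. (The same blow-up scheme in fact works for all $n\ge3$ once one knows that $\Delta(B_n)$ has a Hamiltonian cycle whose diagonal vertices form one consecutive block --- obtainable for $n\ge4$ from a Hamiltonian path of $\Delta(B_n)$ minus its diagonal, which has minimum degree $(n-1)(n-2)\ge\frac12(n^2-n)$, together with the fact that the diagonal vertices form a clique --- but for the write-up, treating $n=3$ separately as above is cleanest.)
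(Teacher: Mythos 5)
Your proof is correct, but it runs along a different axis than the paper's. The paper partitions the vertices of $\Delta(B_n(G))$ into the $|G|$ layers $G_a=\{(i,a,j): i,j\in[n]\}$ indexed by the middle (group) coordinate; each layer induces a copy of $\Delta(B_n)$, so the Hamiltonian cycle of $\Delta(B_n)$ from Theorem \ref{Hamiltonian}(iii) yields in each layer a Hamiltonian path whose endpoints lie over adjacent positions $(i,j)\sim(k,l)$, and since adjacency over such position pairs holds for arbitrary group entries, the $|G|$ layer-paths concatenate cyclically into a Hamiltonian cycle --- one short argument, uniform in $n\ge 3$ and in $G$, which never has to enter a diagonal block $V_{ii}$ along an internal edge. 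You instead partition by the position pair, identify $\Delta(B_n(G))$ as a blow-up of $\Delta(B_n)$ with clique blocks off the diagonal and possibly sparse blocks on it, and then split into cases: Dirac's bound disposes of $n\ge 4$ immediately, while $n=3$ forces you to confront the genuine obstruction that a diagonal block need not be traceable (your $S_3$ example is apt), which you resolve by routing through the complete tripartite graph $K_{g,g,g}$ on $V_{11}\cup V_{22}\cup V_{33}$. Both arguments are sound; the paper's is shorter and avoids all case distinctions, while yours yields extra structural information as a by-product (the blow-up description and the exact minimum degree $(n-1)^2|G|-1$ of $\Delta(B_n(G))$, which the paper does not record).
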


\begin{proof} Let $G = \{a_1, a_2, \ldots, a_m\}$. We show that there exists a Hamiltonian cycle in $\Delta(B_n(G))$. First note that if $(i, j) \sim (k, l)$ in $\c$, then $(i, a, j) \sim (k, b, l)$ in $\Delta(B_n(G))$ for all $a, b \in G$. Let $G_{a_1} = \{(i, a_1, j) \; : \; i, j \in [n] \}$. Since $\c$ is Hamiltonian (see Theorem \ref{Hamiltonian}(iii)), we assume that there exists a Hamiltonian cycle $C$ . Corresponding to the cycle $C$, choose a Hamiltonian path $P$ whose first vertex is $(i, j)$ and the end vertex is $(k, l)$. For the path $P$, there exists a Hamiltonian path in the subgraph induced by $G_{a_1}$ whose first vertex is $(i, a_1, j)$ and the end vertex is $(k, a_1, l)$. Since $(i, j)  \sim (k, l)$ in $\c$, we have $(k, a_1, l) \sim (i, a_2, j)$. By the similar way, we get a Hamiltonian path in the subgraph induced by $G_{a_2}$ whose first vertex is $(i, a_{2}, j)$ and the end vertex is $(k, a_2, l)$. On Continuing this process, we get a Hamiltonian path in $\Delta(B_n(G))$ with first vertex is $(i, a_1, j)$ and the end vertex is $(k, a_m, l)$. For $(i, j) \sim (k, l)$, we get $(i, a_1, j) \sim (k, a_m, l)$. Thus, $\Delta(B_n(G))$ is Hamiltonian.
\end{proof}

\noindent\textbf{Acknowledgement:}
The first author wishes to acknowledge the support of MATRICS Grant \break (MTR/2018/000779) funded by SERB, India.

\end{document}